\newtheorem*{rep@theorem}{\rep@title}
\newcommand{\newreptheorem}[2]{%
\newenvironment{rep#1}[1]{%
 \def\rep@title{#2 \ref{##1}}%
 \begin{rep@theorem}}%
 {\end{rep@theorem}}}
\newtheorem{theorem}{Theorem}
\newtheorem{lemma}[theorem]{Lemma}
\newtheorem{proposition}[theorem]{Proposition}
\newtheorem{remark}[theorem]{Remark}
\newtheorem{definition}[theorem]{Definition}
\renewenvironment{proof}[1][\proofname]{\par\noindent{\bf #1.\ }}{\hfill\BlackBox\\\vspace{2mm}}
\newcommand{\R}{\mathbb{R}}
\newcommand{\F}{\textup{F}}
\newcommand{\C}{\mathbb{C}}
\renewcommand{\H}{\mathcal{H}}
\newcommand{\M}{\mathcal{M}}
\DeclareMathOperator{\Tr}{Tr}
\renewcommand{\L}{\mathcal{L}}
\newcommand{\diam}{\textup{diam}}
\renewcommand{\d}{\,\textup{d}}
\newcommand{\vertiii}[1]{{\left\vert\kern-0.25ex\left\vert\kern-0.25ex\left\vert #1 
    \right\vert\kern-0.25ex\right\vert\kern-0.25ex\right\vert}}
\newcommand{\innerii}[1]{{\left\langle\kern-0.25ex\left\langle #1 
    \right\rangle\kern-0.25ex\right\rangle}}
\newcommand{\mcal}[1]{{\mathcal{#1}}}
\DeclareMathOperator{\rank}{rank}
\DeclareMathOperator{\vspan}{span}
\DeclareMathOperator{\Range}{Range}
\DeclareMathOperator{\Null}{Null}
\DeclareMathOperator{\grad}{\nabla}
\DeclareMathOperator{\vdiv}{div}
\DeclareMathOperator{\D}{\mathrm{D}}
\renewcommand{\div}{\mathrm{div}}
\begin{document}

\title{Operator learning without the adjoint}

\author{\name Nicolas Boull\'{e} \email n.boulle@imperial.ac.uk \\
\addr Department of Mathematics\\
Imperial College London\\
London, SW7 2AZ, UK
\AND
\name Diana Halikias \email dh736@cornell.edu \\
\addr Department of Mathematics\\
Cornell University\\
Ithaca, NY 14853, USA
\AND
\name Samuel E. Otto \email s.otto@cornell.edu \\
\addr Sibley School of Mechanical and Aerospace Engineering \\
Cornell University\\
Ithaca, NY 14853, USA
\AND
\name Alex Townsend \email townsend@cornell.edu \\
\addr Department of Mathematics\\
Cornell University\\
Ithaca, NY 14853, USA
}

\editor{Mehryar Mohri}

\maketitle

\begin{abstract}
    There is a mystery at the heart of operator learning: how can one recover a non-self-adjoint operator from data without probing the adjoint? Current practical approaches suggest that one can accurately recover an operator while only using data generated by the forward action of the operator without access to the adjoint. However, naively, it seems essential to sample the action of the adjoint. In this paper, we partially explain this mystery by proving that without querying the adjoint, one can approximate a family of non-self-adjoint infinite-dimensional compact operators via projection onto a Fourier basis.  We then apply the result to recovering Green's functions of elliptic partial differential operators and derive an adjoint-free sample complexity bound. While existing theory justifies low sample complexity in operator learning, ours is the first adjoint-free analysis that attempts to close the gap between theory and practice.
\end{abstract}

\begin{keywords}
    Operator learning, Partial Differential Equations, Numerical Linear Algebra, Adjoint Operator
\end{keywords}

\section{Introduction}
Let $\mathcal A: \mathcal{H} \to \mathcal{H}'$ be an operator between Hilbert spaces. Suppose that one can only access $\mathcal A$ via the forward and adjoint queries $f \mapsto \mathcal Af$ and $g \mapsto \mathcal A^\ast g$, where $f \in \mathcal{H}$ and $g \in \mathcal{H}'$ are inputs. We consider the problem of approximating $\mathcal A$ efficiently from data, using as few inputs $\{f_i\}_{i = 1}^N$ as possible. In this paper, we ask the following question:\\
\centerline{
    Is it possible to recover $\mathcal A$ when one can only query $\mathcal A$, and not $\mathcal A^\ast$?
}
As a toy example, consider the discrete problem of recovering an $N \times N$ rank-one matrix $A = uv^\top$, where $u, v \in \R^{N}$. The randomized SVD~\citep{halko2011finding} and generalized Nystr\"om~\citep{nakatsukasa2020fast,tropp2017practical} methods recover $A$ in just two queries: one with $A$, and one with $A^\top$. However, if one cannot query $A^\top$, one can only observe $uv^\top x = (v^\top x) u$. Therefore, to recover $A$, one needs $N$ matrix-vector products with $A$. Thus, in general, the action of the adjoint is essential to efficient low-rank matrix recovery.

The situation is more complicated for other classes of structured matrices. Consider the recovery of an $N \times N$ Toeplitz matrix $T$ from matrix-vector products. Unlike in the low-rank case, this can be done using just two matrix-vector products with $T$: $Te_1$ and $Te_N$, where $e_i \in \R^n$ is the $i$th elementary basis vector. In this case, access to the action of $T^\top$ is not required, even though $T$ is not symmetric. These examples suggest that depending on what prior information is known about the matrix, the adjoint may or may not be needed in a recovery algorithm~\citep{halikias2022matrix}.

The infinite-dimensional generalization of the matrix recovery problem arises naturally in operator learning~\citep{boulle2023mathematical}. Learning mappings between function spaces has widespread applications in science and engineering, as one can use data to either efficiently approximate existing scientific models or even discover new ones entirely. Moreover, just as low-rank matrices arise naturally in data science~\citep{townsend2019bigdata}, operators that occur in physics also have known mathematical properties. Thus, to be as efficient and accurate as possible, operator learning techniques seek to exploit prior knowledge about the operator.

Scientists across many disciplines use machine learning techniques to discover dynamical systems and partial differential equations (PDEs)  from data. These methods also benefit by leveraging prior information about the underlying physical operators. For example, a new interdisciplinary approach leverages machine learning and physical laws to uncover PDEs from experimental data or numerical simulations~\citep{karniadakis2021physics}. In particular, this approach uses input-output data pairs to approximate a PDE's solution operator, which maps source terms to their corresponding solutions. Within this context, neural operators, such as the Fourier Neural Operator (FNO)~\citep{li2020fourier,kovachki2023neural} and the Deep Operator Network (DeepONet)~\citep{lu2021learning}, have been introduced. These are extensions of traditional neural networks designed to learn mappings between infinite-dimensional function spaces. Their application to PDEs has shown significant promise, allowing them to act as rapid solvers. Once trained, neural operators can be seamlessly integrated into optimization loops for parameter estimation or used to make predictions about previously unseen data.

In the context of linear PDEs, the adjoint operator is essentially a dual operator that can sometimes be interpreted as the operator that arises when changing the direction of time or reversing the direction of space in the original PDE. The adjoint operator arises frequently in linear sensitivity analysis. In practice, acquiring data from the adjoint operator can be impossible when the underlying PDE is unknown. From a theoretical point-of-view, previous studies mostly focused on learning the solution operator of self-adjoint elliptic PDEs~\citep{boulle2021learning,boulle2023elliptic} in divergence form defined as
\begin{equation} \label{eq_elliptic}
    L u\coloneqq -\div(\bm{A}(x)\nabla u) = f, \quad x\in \Omega\subset\R^d,
\end{equation}
where the coefficient matrix $\bm{A}$ is symmetric and satisfies the uniform ellipticity condition. However, non-self-adjoint PDEs arise naturally when considering time-dependent problems such as the heat equation or advection-diffusion equation, i.e., $L u = -\div(\bm{A}(x) \grad u) + \bm{c}(x)\cdot \grad u$. In this context, it seems essential to require a solver for the adjoint to obtain information about the left and right singular functions of the PDE~\citep{boulle2022learning}. Curiously, there is a lack of emphasis in practical works on the need for adjoint equation solvers, as many methods seem to succeed without them.

This paper aims to bridge the gap between the theoretical requirement for the adjoint in PDE learning and its omission in practice by providing theoretical guarantees on operator learning in the adjoint-free setting. Our goal is to understand when and why neural network models can recover operators without access to data about the adjoint operator. To this end, we provide a thorough characterization of various contexts where one can leverage additional assumptions to quantify the accuracy of the adjoint-free reconstruction.

In the finite-dimensional setting of low-rank matrix recovery, we prove that the quality of the reconstructed matrix is fundamentally limited without access to the adjoint. However, we show that the quality of the approximation improves when we have more information about the left and right singular vectors. This suggests that no clever technique from linear algebra can be leveraged in the analogous adjoint-free operator learning problem unless we have prior information. 

\renewcommand{\algorithmicrequire}{\textbf{Input:}}
\renewcommand{\algorithmicensure}{\textbf{Output:}}
\begin{algorithm}[htbp]
\caption{Adjoint-free approximation algorithm}\label{alg_adjoint}
\begin{algorithmic}[1]
\Require Bounded linear operator $A:\mathcal{H}\to \mathcal{H}'$, self-adjoint operator $L:D(L)\subset \mathcal{H}\to\mathcal{H}$ such that $\Range(A^*) \subset D(L)$, integer $n\geq 1$.
\State Compute the first $n$ eigenfunctions $\varphi_1,\ldots,\varphi_n$ of $L$ and eigenvalues $\lambda_1,\ldots,\lambda_n$, with $|\lambda_1|\leq |\lambda_2|\leq \cdots |\lambda_n|$.
\State Sample the operator $A$ $n$ times at the eigenfunctions of $L$ to obtain \[u_1=A(\varphi_1),\quad \ldots,\quad u_n=A(\varphi_n).\]
\State Define the rank-$n$ projected operator $A P_n:\mathcal{H}\to \mathcal{H}'$ as
\[A P_n(f) \coloneqq \sum_{k=1}^n u_k\langle \varphi_k,\ f\rangle_{\mcal{H}} = \sum_{k=1}^n A(\varphi_k)\langle \varphi_k,\ f\rangle_{\mcal{H}}, \quad f\in \H\]
\Ensure Approximation $A P_n$ of $A$ satisfying
\[\| A - A P_n \|_{\mcal{H}\to\mcal{H}'} \leq \frac{1}{| \lambda_{n+1} |} \| L A^* \|_{\mcal{H}'\to\mcal{H}}.\]
\end{algorithmic}
\end{algorithm}

To approximate a non-self-adjoint compact operator $A$ without the adjoint, we exploit favorable properties of a carefully chosen preconditioner $L$ and reduce the problem to bounding $\|LA^\ast\|$. In this case, prior knowledge of $A$ is used to select a suitable $L$ guaranteeing the approximation's quality. This analysis gives a simple algorithm (see \cref{alg_adjoint}) for approximating $A$ by projecting it on the eigenfunctions of $L$. In the particular case where we seek to learn the solution operator of a uniformly elliptic PDE, we show that the Laplace-Beltrami operator can be used as a preconditioner, and derive explicit bounds on $\|LA^\ast\|$ by exploiting elliptic regularity. A general method using the Laplacian preconditioner is presented in \cref{sec:extension_method} (see \cref{alg_extension}). We perform numerical experiments showing that the convergence rate of our approximation is in close agreement with the theoretical predictions. Finally, we analyze our bound for solution operators of elliptic PDEs perturbed away from self-adjointness by lower-order terms. The linear degradation in performance with increasing non-self-adjointness predicted by our analysis is in agreement with deep learning experiments performed using standard operator learning methods. The close agreement between our theoretical and numerical results suggests that our framework explains the success of adjoint-free deep learning models for PDEs. Moreover, our work highlights operator preconditioning as a tool that can further improve performance in the adjoint-free setting.

\subsection{Related Works}

The predominant focus of theoretical research in operator learning is approximation theory results. \cite{chen1995universal} and \cite{lu2021learning} generalized the universal approximation theorem for neural networks~\citep{cybenko1989approximation} to shallow and deep neural operators. Over the past few years, significant progress has been made to derive approximation error bounds for neural operator techniques such as Fourier Neural Operators~\citep{kovachki2021universal,kovachki2023neural,lanthaler2021error} and DeepONets~\citep{lanthaler2021error,lu2021learning,schwab2023deep}. These results show that neural operators can approximate a large class of operators, and they characterize approximation error in terms of the network's width and depth.

Aside from approximation theory, other approaches aim to derive sample complexity bounds for solution operators associated with elliptic PDEs, i.e., determining the size of the training dataset needed to achieve a target error. These methods exploit prior knowledge of the structure of the solution operator, such as sparsity patterns~\citep{schafer2021sparse} or the hierarchical low-rank structure of the Green's function~\citep{boulle2021learning,boulle2023elliptic}. Convergence rates for more general linear self-adjoint operators have been derived by \cite{de2021convergence}, whose analysis assumes that the target operator is diagonalizable in a known basis. To our knowledge, the recent sample complexity analysis of the solution operator of parabolic (time-dependent) PDEs is the only theoretical extension to non-self-adjoint operators~\citep{boulle2022learning}. However, the proof technique assumes one can evaluate the adjoint operator, which is unrealistic in many applications.

On the practical side, many studies are available in the literature that successfully apply neural operators to a wide range of physical problems. For example, \cite{wang2021learning} extend DeepONets to incorporate prior knowledge of the PDE and consider applications in parametric differential equations such as diffusion-reaction equations. Then, \cite{lu2022comprehensive} compare Fourier Neural Operators and DeepONets on fluid dynamics benchmarks (Darcy flow and Navier--Stokes equations) and report relative testing errors of $1\%-5\%$. \cite{wen2022u} combine the popular U-NET architecture~\citep{ronneberger2015u} with FNO for solving multiphase flow problems in geosciences. Finally, \cite{goswami2022physics} applied DeepONets to predict crack locations in materials. However, to the best of our knowledge, none of the existing works in the literature studied or evaluated the impact of the non-self-adjointness of the operator on the performance of the model.

\subsection{Summary of Contributions}
We address the intriguing question of how to recover non-self-adjoint operators from data without accessing the action of the adjoint operator. By providing the first adjoint-free analysis, we attempt to close the existing gap between theoretical understanding and practical applications (see~\cref{sec_motivation}). We have three main contributions:

\paragraph{Limits of adjoint-free low-rank matrix recovery.} We start in the fundamental setting of recovering a low-rank matrix by querying the map $x\mapsto Ax$ but without access to $x\mapsto A^*x$. We show that querying $x\mapsto A^*x$ is essential for recovering $A$ and prove rigorous guarantees on the quality of the reconstruction in terms of how close $A$ is to a symmetric matrix (see~\cref{thm:lower_bound_on_Omega_eps,thm:upper_bound_on_Omega_eps}). Thus, we conclude that without prior knowledge of the properties of the adjoint, one must have access to its action.

\paragraph{An adjoint-free operator learning approach.} To provide an operator learning approach that does not need access to the adjoint, we exploit regularity results from PDE theory to estimate the range of the adjoint of the solution operator.  This allows us to prove the first guarantees on the accuracy of adjoint-free approximations (see~\cref{thm:operator_projection}). Our key insight is to leverage the favorable properties of a prior self-adjoint operator, such as the Laplace--Beltrami operator, to use as an operator preconditioner in the approximation problem. In particular, we query the action of the solution operator on the eigenfunctions of the prior self-adjoint operator, yielding an approximation with an error that decays at a rate determined by the eigenvalues of the prior. This is remarkable because common operator learning techniques (see~\cref{fig_exp}(a)) always seem to plateau; yet, we construct a simple algorithm that provably converges.

\paragraph{The effect of non-self-adjointness on sample complexity.}
We derive a sample complexity bound for our algorithm when applied to second-order uniformly-elliptic PDEs that are perturbed away from self-adjointness by lower-order terms. We show that for small perturbations, our bound on the approximation error grows linearly with the size of the perturbation (see~\cref{thm_approx_L2}), and we conjecture that this linear growth continues for large perturbations as well. This aspect of the error growth is also present in common operator learning techniques, as our numerical experiments illustrate (see~\cref{fig_exp}(d)). With respect to our operator learning algorithm, this means that the number of samples required to achieve a fixed error tolerance grows algebraically with the perturbation size.

\subsection{Organization of the Paper}

The paper is organized as follows. We begin in \cref{sec_motivation} with motivational examples for analyzing the sample complexity of non-self-adjoint operator learning. Then, in subsequent sections,  we gradually strengthen the assumptions about the operator we wish to learn and analyze the quality of our reconstruction in each case. In~\cref{sec:LowRankRecovery}, we consider the finite-dimensional case of a low-rank operator recovery problem. Given additional information about how close an unknown low-rank matrix is to symmetric, we provide lower and upper bounds (see~\cref{thm:lower_bound_on_Omega_eps,thm:upper_bound_on_Omega_eps}) on the size of the set of possible matrices satisfying given sketching constraints. In \cref{sec:FourierSampling}, we consider the recovery problem for general compact operators with prior information encoded by preconditioners (see~\cref{thm:operator_projection}). We prove that operators with Sobolev regularity properties, such as elliptic PDEs, can be successfully approximated by projection onto Fourier bases (see~\cref{thm:domain_with_boundary_case}), and perform numerical experiments to confirm the predicted rate of convergence. Finally, in \cref{sec:Perturbed}, we consider a concrete operator learning problem of approximating the Green's function of a 3D elliptic PDE with lower order perturbations and derive a sample complexity bound for reconstructing the solution operator (see~\cref{sec_approx_H,thm_approx_L2}). We then conclude with discussions and remarks in \cref{sec_conclusions}.

\section{Motivational Examples of Non-Self-Adjoint Operator Learning} \label{sec_motivation}

As a first motivational example for the analysis of non-self-adjoint operators, we consider the problem of estimating the sample complexity of learning parabolic PDEs (generalizing the heat equation) in the following form:
\[\mathcal{P}u\coloneqq u_t-\div(\bm{A}(x,t)\nabla u)=f(x,t),\quad x\in\Omega,\, t\in[0,T],\quad 0<T<\infty,\]
where $\Omega\subset\R^d$ is a bounded spatial domain with Lipschitz smooth boundary and $A(x,t)\in \R^{d\times d}$ is a symmetric positive definite matrix with bounded coefficient functions satisfying the uniform parabolicity condition. Here, we are interested in estimating the number of training pairs $\{(f,u)\}$ needed to learn the solution operator associated with $\mathcal{P}$, i.e., the Green's function~\citep{evans10}, to within a target tolerance $\epsilon>0$. \cite{boulle2022learning} construct an algorithm that provably converges to the solution operator at an algebraic rate with respect to the number of training pairs. However, a key assumption required to approximate the solution operator is that one can evaluate the adjoint $\mathcal{P}^*$ of the parabolic operator defined as
\[\mathcal{P}^*u = -u_t-\div(\bm{A}(x,t)^\top \nabla u).\]
In this work, we ask whether the requirement for the adjoint is an essential assumption and why neural operators do not require the adjoint in practical applications. A second example emerges from the stationary convection-diffusion equation with variable coefficients in the form:
\begin{equation} \label{eq_conv_diff_2}
    \L u\coloneqq -\div (\bm{A}(x)\nabla u)+\bm{c}\cdot \nabla u,\quad x\in \Omega\subset \R^d,
\end{equation}
where the lower order coefficient vector $\bm{c}$ contains functions in $L^p(\Omega)$ for some $p>d$~\citep{kim2019green}. Here, one can interpret $\L$ as a perturbation of the self-adjoint partial differential operator $L$ defined in \cref{eq_elliptic}. In particular, the magnitude of $\bm{c}$ influences the difference between the solution operator and its adjoint, and our ability to approximate it from training pairs of source terms and solutions (see~\cref{sec:Perturbed}).

\begin{figure}[htbp]
    \centering
    \begin{overpic}[width=\textwidth]{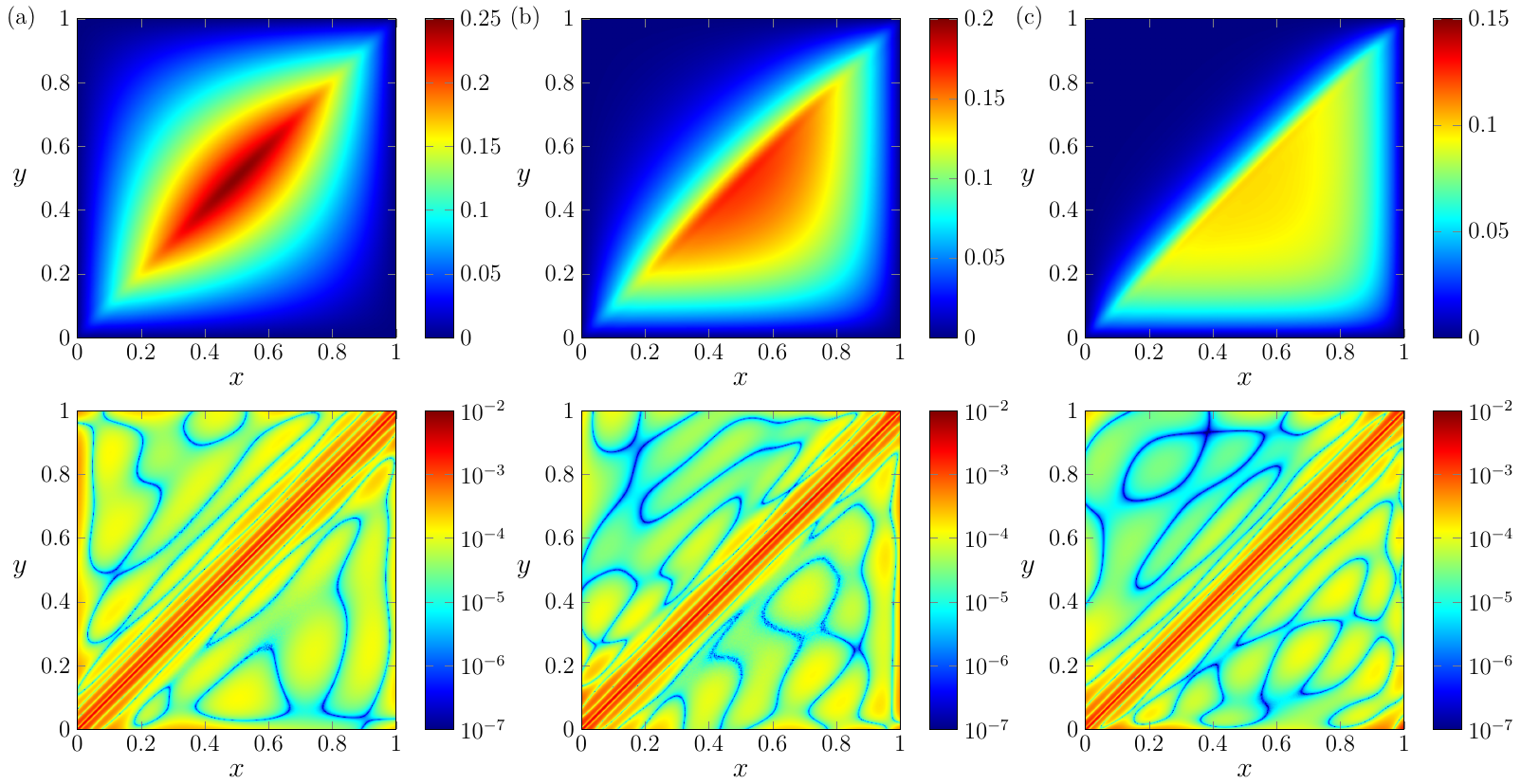}
    \end{overpic}
    \caption{Green's functions learned by a rational neural network (top row) along with the absolute error with the exact Green's function (bottom row) for the stationary convection-diffusion equation, with coefficients (a) $c=0$, (b) $c=5$, and (c) $c=10$.}
    \label{fig_error}
\end{figure}

We perform a deep learning experiment to approximate Green's function associated with the one-dimensional stationary convection-diffusion equation with homogeneous Dirichlet boundary conditions on $\Omega=[0,1]$:
\begin{equation} \label{eq_conv_diff}
    -\frac{d^2 u}{d x^2} + c\frac{d u}{d x}=f,\quad u(0)=u(1)=0,\quad x\in [0,1].
\end{equation}
We employ a rational neural network~\citep{boulle2020rational} to approximate the Green's function associated with \cref{eq_conv_diff} using the Green's function learning technique introduced by~\cite{boulle2021data}. We sample $25$ random functions from a Gaussian process with squared-exponential kernel and length-scale parameter $\ell=0.03$ and solve \cref{eq_conv_diff} using a Chebyshev spectral collocation method implemented in the Chebfun software system~\citep{driscoll2014chebfun}. The source terms $f$ and solutions $u$ are then sampled on a uniform grid with $200$ points, and the neural network is trained in the TensorFlow library~\citep{tensorflow2015} using a combination of Adam~\citep{kingma2015adam} and L-BFGS~\citep{byrd1995limited} optimization algorithms. The learned Green's function is then evaluated at a higher resolution on a $400\times 400$ grid and compared with the analytical expression for the exact Green's function given by:
\[G(x,y) =
    \frac{(1-e^{c(x-1)})(1-e^{-cy})}{c(1-e^{-c})}H(x-y)+
    \frac{(-1+e^{cx})(e^{-cy}-e^{-c})}{c(1-e^{-c})}H(y-x),
\]
where $H$ is the Heaviside step function.

\begin{figure}[htbp]
    \centering
    \begin{overpic}[width=0.9\textwidth]{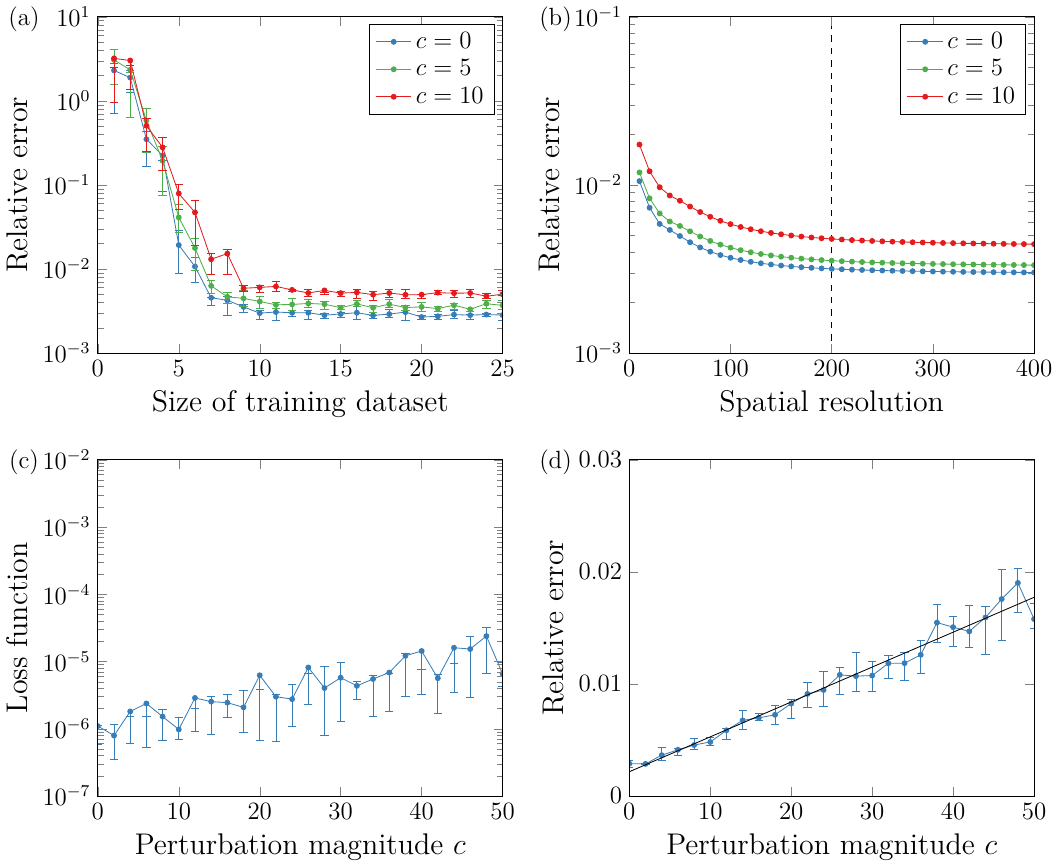}
    \end{overpic}
    \caption{(a) Relative errors for learning the Green's function of the advection-diffusion equation. The graph displays the mean error over ten runs, along with error bars representing the first and third quartiles. (b) Relative errors of the Green's function after training using input-output pairs sampled on a grid with resolution $s=200$ (dashed line) and evaluated at lower and higher resolutions. (c)-(d) Evolution of the loss function after training and relative error as the magnitude of the perturbation increases. The black line in (d) represents the linear least squares approximation and achieves $R^2=0.8$.}
    \label{fig_exp}
\end{figure}

We report in~\cref{fig_error}(a-c) the learned Green's functions of the stationary convection-diffusion equation \eqref{eq_conv_diff} with respective convection parameters $c=0$, $c=5$, and $c=10$, along with the absolute error with the exact Green's function in the bottom row. We observe that even though the difference between the Green's function and its adjoint increases between $c=0$ and $c=10$, the rational neural network can approximate the Green's function within $0.3-0.5\%$ relative error. Additionally, the approximation errors are mainly located around the diagonal $x=y$ of the domain $[0,1]^2$ (see the bottom row in \cref{fig_error}), where the Green's function has derivative discontinuity.

In \cref{fig_exp}, we study the dependence of the error on the size of the training dataset, the spatial grid resolution, and the convection coefficient $c$. To do this, we approximate the Green's function associated with \cref{eq_conv_diff} using $25$ training pairs sampled on a grid with resolution $s=200$ and report the relative error when evaluating the Green's function at different resolutions from $s=10$ to $s=400$. Similar to the Fourier neural operator~\citep{kovachki2023neural}, the rational neural network is capable of zero-shot super-resolution, even when learning highly non-self-adjoint operators, as the approximation error does not degrade when testing the network at a higher resolution.

The relative error plotted in \cref{fig_exp}(a) for different convection coefficients shows two distinct regimes as the size of the training dataset is increased.
There is an initial exponential decay of the error up to $10$ training pairs, followed by a stagnation at small relative error.
These results agree with previous experiments performed in~\citep{boulle2023elliptic} and suggest that one can approximate non-self-adjoint operators with few training data using deep learning. However, our experiments show that the plateau in relative error increases with the magnitude of the convection coefficient $c$, suggesting that there is a systematic component of the error that increases with the non-self-adjointness of the operator we seek to learn.

To study the systematic error introduced by non-self-adjoiness, we progressively increase the magnitude of the perturbation in \cref{eq_conv_diff} and report the corresponding loss function after training and relative errors for the learned Green's function in \cref{fig_exp}(c)-(d). Using a linear least squares regression ($R^2=0.78$), we observe that the relative error increases linearly with the magnitude of the perturbation. At the same time, the loss function after training remains relatively small between $10^{-6}$ and $10^{-5}$. The discrepancy between the magnitude of the loss function and the relative error is because the loss function is a relative mean-squared error, while the error reported in \cref{fig_exp}(d) is measured as a relative $L^2$-error between the exact and learned Green's functions. Finally, we observe in \cref{fig_exp}(d) that the variance of the errors also increases with the perturbation magnitude. These numerical experiments motivate our theoretical analysis of learning non-self-adjoint solution operators associated with PDEs in the rest of the paper. Moreover, these numerical results lead us to introduce an adjoint-free operator learning method that provably converges with the size of the training dataset, and therefore does not suffer from the plateau observed in \cref{fig_exp}(a).

\section{Reconstruction of Low-Rank Matrices under Sketching Constraints}\label{sec:LowRankRecovery}

In this section, we analyze a finite-dimensional variant of the main problem. Existing sample complexity bounds for learning Green's functions of linear PDEs, which consider self-adjoint elliptic PDEs~\citep{boulle2021learning,boulle2023elliptic} or parabolic PDEs~\citep{boulle2022learning}, assume access to the adjoint operator. The proofs of these results exploit randomized numerical linear algebra techniques, such as the randomized singular value decomposition (SVD)~\citep{halko2011finding,martinsson2020randomized,boulle2022generalization}, to construct low-rank approximants of the Green's function on well-separated subdomains. This motivates our investigation into the analogous problem of adjoint-free recovery of low-rank matrices from matrix-vector products.

The randomized SVD is an algorithm that computes a near-best low-rank approximant to a matrix $F\in \R^{n\times n}$ from matrix-vector products with a random input matrix $X\in \R^{n\times s}$ using a two-stage procedure. First, one sketches the matrix $F$ at $X$ to obtain $Y=FX$ and orthonormalizes $Y$ to form a basis $Q$, which approximates the range of $F$. Then, one constructs the approximant $\hat{F} = QQ^*F = Q(F^*Q)^*$ by sketching the adjoint, $F^*$, of $F$. In a landmark paper, \cite{halko2011finding} showed that the approximant $\hat{F}$ is a near-best approximant to $F$ with high probability.

If $F$ is too large to be stored or given as a streaming model~\citep{muthukrishnan2005data,clarkson2009numerical,woodruff2014sketching}, $F = H_1+H_2+H_3+\cdots$, one might not be able to view the matrix twice as in the randomized SVD~\citep{martinsson2020randomized}. Several single-view algorithms have been proposed to compute an approximate SVD of $F$, which visit the matrix only once, such as the Nystr\"om method~\citep{nystrom1930praktische,gittens2013topics,li2017algorithm,tropp2017practical}. However, to our knowledge, every low-rank approximation algorithm based on sketching requires access to $F^\ast$~\citep{martinsson2020randomized, halikias2022matrix}. This leads to a natural question: is there an algorithm that constructs a low-rank approximant to $F$ without its adjoint?

Recently, this question was answered negatively. It was proven that there are infinitely many rank-$k$ matrices $F$ satisfying the same matrix-vector products $FX = Y$ and $F^\top W = Z$, where $X \in \R^{n \times k_1}, W \in \R^{n \times k_2}$, if $\min(k_1, k_2) < k$ and $\max(k_1, k_2) < n$ \citep{halikias2022matrix}. This result extends to complex-valued matrices, so one needs $k$ queries to $F$ and $F^\ast$ each for the matrix recovery problem to have a unique solution. Even if one has as many as $n-1$ matrix-vector products with $F$ and none with $F^\ast$, $F$ is not uniquely determined.  Thus, this section considers the space of these infinitely many possible rank-$k$ matrices when the recovery problem is underspecified and does not have a unique solution.

We aim to understand how close one can get to recovering an unknown low-rank matrix $F\in \C^{n\times n}$ from matrix-vector products with an input matrix $X\in \C^{n\times s}$, i.e., without access to its adjoint. It has already been shown that when one does not have access to $F^\ast$, the possible row spaces of a matrix satisfying the same sketch constraints as $F$ can be arbitrarily far apart in the Riemannian metric on the Grassmannian manifold~\citep{otto2022model}. Thus, we assume some additional structure on our space of possible matrices, using the following notion of near-symmetry.
\begin{definition}[Near-symmetry] \label{def_near_sym}
    Let $F\in \C^{n\times n}$ be a rank-$k$ matrix with singular value decomposition $F=U_FS_FV_F^\ast$. We say that $F$ is $\delta$-near-symmetric if its left and right singular subspaces are $\delta$-close, i.e., there exists a $k \times k$ orthogonal matrix $Q$ such that
    \[
        \|U_F^\ast V_F-Q\|_2\leq \delta.
    \]
\end{definition}
We show that one cannot recover an accurate low-rank approximant to $F$ unless $F$ is near-symmetric. This analysis indicates that the adjoint is essential for low-rank recovery algorithms.

\subsection{Reconstruction of Near-Symmetric Matrices}

We consider an unknown rank-$k$ matrix $F \in M_n(\C)$ with singular value decomposition $F = U_F S_F V_F^\ast$ and aim to construct an approximant $A$ to $F$ satisfying the sketch constraint $AX=FX$, where the test matrix $X \in \C^{n \times s}$ has linearly independent columns and $\rank(FX) = k$. By construction, we have $k \leq s\leq n$. Almost every matrix $X \in \C^{n \times s}$ with respect to the Lebesgue measure satisfies the condition $\rank(FX) = k$~\citep[Lem.~2.4]{otto2022model}, meaning that the queries almost surely reveal the rank of $F$ and its range. In randomized numerical linear algebra, the test matrix $X$ is typically chosen to be a random matrix following a standard Gaussian distribution~\citep{martinsson2020randomized}, but other random embeddings, such as subsampled trigonometric transforms~\citep{woolfe2008fast} or coordinate samplings~\citep{williams2000using,tropp2011improved,kumar2012sampling,gittens2013topics}, may also be used.

We assume that $F$ is $\delta$-near-symmetric (see~\cref{def_near_sym}), but we only have access to partial information regarding the symmetry of $F$, namely that $F$ is $\epsilon$-near-symmetric for some $\epsilon \geq \delta$.
To quantify the resulting uncertainty about $F$, we study the set of possible matrices one could recover given this prior knowledge.
We denote this set
\begin{equation} \label{def_set_omega}
    \Omega_{F, X}^\epsilon = \{ A \in M_n(\C)\colon \rank(A) = k,\,  AX = FX,\, \exists Q\in O(k),\, \| U_A^\ast V_A - Q \|_2 \leq \epsilon\},
\end{equation}
where $A=U_A S_A V_A^\ast$ is the singular value decomposition of $A$, $O(k)$ is the group of $k\times k$ orthogonal matrices, and $\|\cdot\|_2$ denotes the spectral norm. This set might be nonempty even when $\epsilon < \delta$, but to ensure that $F \in \Omega_{F,X}^{\epsilon}$, we must have $\epsilon \geq \min_{Q\in O(k)} \| U_F^* V_F - Q \|_2$. The minimum exists because $O(k)$ is compact. 

\begin{remark}[Low-rank recovery algorithms and $\Omega_{F, X}^\epsilon$]
Given some tolerance $\epsilon$, $\Omega_{F,X}^\epsilon$ is the set of $\epsilon$-near-symmetric matrices that can be returned by any low-rank recovery algorithm when approximating $F$, such as the randomized SVD~\citep{halko2011finding,martinsson2020randomized} or the Nystr\"om method~\citep{nystrom1930praktische}. One can find a symmetric approximation in the set using Nystr\"om method by querying $A$ in place of $A^*$.
\end{remark}

The size of $\Omega_{F,X}^\epsilon$ is measured by its diameter in the spectral norm and determines the maximum accuracy of any reasonable reconstruction. If the diameter is large, one cannot estimate $F$ accurately, as one cannot distinguish between any candidate matrix in $\Omega_{F, X}^\epsilon$. This is because any matrix in $\Omega_{F, X}^\epsilon$ satisfies the sketching constraint and is near-symmetric. On the other hand, a small diameter guarantees the fidelity of the reconstruction. We aim to bound the size of $\Omega_{F, X}^\epsilon$, i.e., determine how far apart any two matrices in $\Omega_{F, X}^\epsilon$ can be from each other, with respect to $\epsilon$, which measures our prior knowledge of $F$'s symmetry. We first provide an upper bound on the diameter of the set $\Omega_{F, X}^\epsilon$.

The upper bound relies on a preliminary lemma. \cref{lem:V_minus_UQ}  provides an orthogonal change of basis, bringing a matrix $U$ with orthonormal columns close to another matrix $V$ in the spectral norm sense. The difference is bounded by the proximity of $U^* V$ to an orthogonal matrix. For the rest of this section, $\sigma_{\max}$ and $\sigma_{\min}$ respectively denote the largest and smallest nonzero singular values of a matrix.

\begin{lemma} \label{lem:V_minus_UQ}
    Let $U$ and $V$ be two $n\times k$ matrices with orthonormal columns and $U^\ast V = Q_l \Sigma Q_r^\ast\in \C^{k\times k}$ be a complete SVD. Then, the orthonormal matrix $Q_0 = Q_l Q_r^\ast$ satisfies
    \[
        \| V - U Q_0 \|_2^2
        = 2 \left(\min_{Q:\ Q^\top Q = I} \| Q - U^\ast V \|_2\right)
        = 2\left(1 - \sigma_{\min}(U^\ast V)\right).
    \]
\end{lemma}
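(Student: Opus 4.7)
The plan is to unpack both sides of the chain of equalities directly from the SVD data, exploiting the fact that $U$ and $V$ have orthonormal columns. This pinpoints two reductions: first, since $U^{\ast}U=V^{\ast}V=I_k$, every cross term simplifies, and second, all singular values of $U^{\ast}V$ lie in $[0,1]$ (being the cosines of the canonical angles between the ranges of $U$ and $V$), which is what makes $1-\sigma_{\min}(U^{\ast}V)\geq 0$ and keeps every norm inequality tight.

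For the rightmost equality I would use the identity $\|M\|_2^2=\|M^{\ast}M\|_2$ and expand
\[
(V-UQ_0)^{\ast}(V-UQ_0) = 2I - V^{\ast}UQ_0 - Q_0^{\ast}U^{\ast}V.
\]
Substituting $U^{\ast}V=Q_l\Sigma Q_r^{\ast}$ and $Q_0=Q_lQ_r^{\ast}$, the two cross terms collapse to $Q_r\Sigma Q_r^{\ast}$ (and its adjoint), so the expression equals $2I-2Q_r\Sigma Q_r^{\ast}$. This matrix is Hermitian with eigenvalues $2(1-\sigma_i)\in[0,2]$, so its spectral norm is $2(1-\sigma_{\min}(\Sigma))=2(1-\sigma_{\min}(U^{\ast}V))$.

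For the middle equality I would invoke unitary invariance of the spectral norm: for any orthogonal $Q$,
\[
\|Q-U^{\ast}V\|_2 = \|Q_l^{\ast}QQ_r - \Sigma\|_2 = \|\tilde Q - \Sigma\|_2,
\]
where $\tilde Q=Q_l^{\ast}QQ_r$ again ranges over $O(k)$ as $Q$ does. Choosing $Q=Q_0$ gives $\tilde Q=I$ and $\|I-\Sigma\|_2=1-\sigma_{\min}(\Sigma)$, so the minimum is at most this value. For the reverse inequality, let $i^{\ast}$ be an index achieving $\sigma_{\min}$, and examine the $i^{\ast}$-th column:
\[
\|(\tilde Q-\Sigma)e_{i^{\ast}}\|_2^2 = \sum_{j}|\tilde Q_{j,i^{\ast}}|^2 - 2\sigma_{\min}\,\tilde Q_{i^{\ast}i^{\ast}} + \sigma_{\min}^2 = 1 - 2\sigma_{\min}\tilde Q_{i^{\ast}i^{\ast}}+\sigma_{\min}^2,
\]
using that column $i^{\ast}$ of an orthogonal matrix has unit $\ell^2$ norm. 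Since $|\tilde Q_{i^{\ast}i^{\ast}}|\leq 1$ and $\sigma_{\min}\in[0,1]$, this is at least $(1-\sigma_{\min})^2$, giving $\|\tilde Q-\Sigma\|_2\geq 1-\sigma_{\min}$ and completing the minimization.

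The only nonroutine step is the spectral-norm Procrustes lower bound: unlike the Frobenius version, $\min_{\tilde Q\in O(k)}\|\tilde Q-\Sigma\|_2$ is not a textbook formula, and one must rule out that some nontrivial orthogonal $\tilde Q$ beats $\tilde Q=I$. The column-by-column argument above, anchored at the smallest singular value and using $\sigma_i\le 1$, handles this cleanly; everything else is algebra driven by the SVD of $U^{\ast}V$.
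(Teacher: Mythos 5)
Your proof is correct. The leftmost-to-rightmost equality is obtained by essentially the same computation as the paper's: the paper takes a unit vector $x$, sets $u = UQ_0 x$, $v = Vx$, uses $\|u-v\|^2 = 2(1-u^\ast v)$ and $u^\ast v = x^\ast Q_r\Sigma Q_r^\ast x$, then takes the supremum over $x$; you phrase the identical calculation at the matrix level by diagonalizing $(V-UQ_0)^\ast(V-UQ_0) = 2I - 2Q_r\Sigma Q_r^\ast$. The genuine difference is the middle equality: the paper invokes Fan's 1955 theorem on unitarily invariant norms as a black box to conclude $\min_Q\|Q-U^\ast V\|_2 = 1-\sigma_{\min}(U^\ast V)$, whereas you prove the spectral-norm orthogonal Procrustes bound from scratch — reducing to $\min_{\tilde Q}\|\tilde Q-\Sigma\|_2$ by unitary invariance, establishing the upper bound by choosing $\tilde Q=I$, and the lower bound by examining the column of $\tilde Q - \Sigma$ at the index of $\sigma_{\min}$. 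Your version is more self-contained and elementary, at the cost of a page of extra work that the paper absorbs into a citation. One small fix in the complex setting: the cross term in your column computation should read $-2\sigma_{\min}\operatorname{Re}(\tilde Q_{i^\ast i^\ast})$ rather than $-2\sigma_{\min}\tilde Q_{i^\ast i^\ast}$, but since $\operatorname{Re}(\tilde Q_{i^\ast i^\ast}) \le |\tilde Q_{i^\ast i^\ast}| \le 1$ the lower bound $(1-\sigma_{\min})^2$ survives unchanged.
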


\begin{proof}
    We first consider the SVD of the matrix $U^*V$ as $U^*V=Q_l\Sigma Q_r^*$ and introduce the orthonormal matrix $Q_0=Q_l Q_r^\top$. Let $d = \min_{Q:\ Q^\top Q = I} \| Q - U^*V\|_2$ denote the distance between the $k\times k$ matrix $U^*V$ and the set of orthogonal matrices.

    In general, from a result for unitarily invariant norms~\citep[Thm.~1]{Fan1955some}, if $A$ is a square matrix with $\| A \|_2 \leq 1$ and has complete SVD $A = U \Sigma V^*$, then
    \begin{equation}\label{eq:minimizer_of_A_minus_Q}
        \min_{Q:\ Q^\top Q = I} \| Q - A \|_2 = 1 - \sigma_{\min}(A)
    \end{equation}
    is achieved by $Q = U V^*$. Applying \cref{eq:minimizer_of_A_minus_Q} to $U^*V$ yields a characterization of $d$ with the smallest singular value of $U^*V$ as $\sigma_{\min}(U^* V) = 1 - d$. Let $x\in \R^k$ be a unit vector, then $v = V x$ and $u = U Q_0 x$ have norm $1$ because $U$ and $V$ have orthonormal columns. Moreover, $
        u^\top v
        = x^\top Q_0^\top U^\top V x
        = x^\top Q_r Q_l^\top Q_l \Sigma Q_r^\top x
        = x^\top Q_r \Sigma Q_r^\top x
        \geq \sigma_{\min}(U^\top V) = 1 - d$,
    with equality when $x = Q_r e_k$. Using similar triangles in the $(u,v)$-plane, we readily obtain
    \[
        \frac{1 - u^\top v}{\| u - v \|} = \frac{\| u - v \|}{2},
    \]
    which implies that$
        \| V x - U Q_0 x \|^2
        = \| v - u \|^2
        = 2 (1 - u^\top v) \leq 2 d,
    $
    with equality when $x = Q_r e_k$. Finally, taking the supremum over $x$,  $
        \| V - U Q_0 \|_2^2
        = \sup_{x: \ \| x \| = 1} \| V x - U Q_0 x \|^2
        = 2 d,
    $
    which concludes the proof.
\end{proof}

We are now ready to state \cref{thm:upper_bound_on_Omega_eps}, which provides an upper bound on the diameter of the set $\Omega_{F,X}^\epsilon$ defined in \cref{def_set_omega}.

\begin{theorem}[Upper bound] \label{thm:upper_bound_on_Omega_eps}

    Let $0\leq \delta \leq \epsilon < 1$, $F\in M_n(\C)$ be a $\delta$-near-symmetric rank-$k$ matrix, and $X \in \C^{n \times s}$ be a test matrix with $s\geq k$  orthonormal columns such that $\rank(FX) = k$. Let $F = U_0 \Sigma_0 V_0^*$ be a slim SVD of $F$, and introduce the constant $c = \sigma_{\max}(X^* V_0) / \sigma_{\min}(X^* V_0)^2$. If $c \big( \sqrt{2\epsilon} + \sqrt{2\delta} \big) < 1$, then
    \[
        \sup_{A, B \in\Omega_{F,X}^{\epsilon}} \| A - B \|_2
        \leq 4 \| F X \|_2 \left[ \frac{c^2 \big( \sqrt{2\epsilon} + \sqrt{2\delta} \big) }{1 - c \big( \sqrt{2\epsilon} + \sqrt{2\delta} \big) } \right].
    \]
\end{theorem}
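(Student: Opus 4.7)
The plan is to exploit the near-symmetry of every matrix in $\Omega_{F,X}^\epsilon$ to relate its right singular vectors to $V_0$, and then use the sketch constraint $AX = FX$ to pin down the remaining ambiguity. The key structural observation is that for any $A \in \Omega_{F,X}^\epsilon$, the rank-$k$ constraint combined with $AX = FX$ and $\rank(FX) = k$ forces $\range(A) = \range(FX) = \range(U_0)$, so that $U_A = U_0 R_A$ for some orthogonal $R_A \in O(k)$. Applying~\cref{lem:V_minus_UQ} twice---to $(U_A, V_A)$ and to $(U_0, V_0)$---produces orthogonal matrices $Q_A^\flat, Q_F^\flat$ with $\|V_A - U_A Q_A^\flat\|_2 \leq \sqrt{2\epsilon}$ and $\|V_0 - U_0 Q_F^\flat\|_2 \leq \sqrt{2\delta}$. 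Composing these through the orthogonal change of basis $P_A := (Q_F^\flat)^\ast R_A Q_A^\flat$ yields the crucial single estimate $\|V_A - V_0 P_A\|_2 \leq \sqrt{2\epsilon} + \sqrt{2\delta} =: \eta$.

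This estimate drives everything that follows. First, it gives $\sigma_{\min}(V_A^\ast X) \geq \sigma_{\min}(V_0^\ast X) - \eta = \sigma_- - \eta$, where $\sigma_\pm$ denote the extremal singular values of $V_0^\ast X$. Solving $S_A V_A^\ast X = U_A^\ast FX$ by right-inverting $V_A^\ast X$ then gives the uniform operator-norm bound $\|A\|_2 = \|S_A\|_2 \leq \|FX\|_2/(\sigma_- - \eta)$, valid for every $A \in \Omega_{F,X}^\epsilon$. Second, setting $C_A := U_A S_A P_A^\ast$, a direct calculation gives $\|A - C_A V_0^\ast\|_2 \leq \|A\|_2\, \eta$ together with $\|C_A\|_2 = \|A\|_2$. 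For any $A, B \in \Omega_{F,X}^\epsilon$, the sketch constraint $(A - B) X = 0$ then forces $\|(C_A - C_B) V_0^\ast X\|_2 \leq (\|A\|_2 + \|B\|_2)\eta$, and right-inverting $V_0^\ast X$ (full row rank $k$) yields $\|C_A - C_B\|_2 \leq (\|A\|_2 + \|B\|_2)\eta/\sigma_-$. A triangle inequality now produces the raw bound
\[
    \|A - B\|_2 \leq \|C_A - C_B\|_2 + \|A - C_A V_0^\ast\|_2 + \|C_B V_0^\ast - B\|_2 \leq (\|A\|_2 + \|B\|_2)\,\eta\,\bigl(1 + 1/\sigma_-\bigr).
\]

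The final step is cosmetic algebra to convert this raw bound into the form given in the theorem. Using the trivial inequalities $\sigma_- \leq 1$ (both $V_0$ and $X$ have orthonormal columns) and $\sigma_- \leq \sigma_+$, I would bound $1 + 1/\sigma_- \leq 2/\sigma_- \leq 2c$ and, after clearing denominators, $1/(\sigma_- - \eta) \leq c/(1 - c\eta)$. Substituting these two estimates into the raw bound collapses it to the advertised $4c^2\,\eta\,\|FX\|_2/(1 - c\eta)$. I expect the main obstacle to be the first step: identifying the orthogonal $P_A$ linking $V_A$ to $V_0$ with combined error $\eta = \sqrt{2\epsilon} + \sqrt{2\delta}$. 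Once this single estimate is in hand, every other bound in the proof follows by short and standard manipulations, and the final conversion to the condition-number-like constant $c = \sigma_+/\sigma_-^2$ is purely mechanical.
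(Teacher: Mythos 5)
Your proof is correct, and it takes a genuinely different route from the paper's even though both arguments pivot on the same key estimate $\|V_A - V_0 P_A\|_2 \le \sqrt{2\epsilon}+\sqrt{2\delta}$ obtained by composing two applications of \cref{lem:V_minus_UQ} through the change of basis on the common range $\Range(U_A)=\Range(U_0)$. The paper proceeds by writing the explicit reconstruction formula $A = F\Phi(QV_A^*\Phi)^{-1}QV_A^*$ (with $\Phi = XX^*V_0$), comparing it to $F = F\Phi(V_0^*\Phi)^{-1}V_0^*$, and invoking Kato's perturbation bound for matrix inversion to control $\|(V_0^*\Phi)^{-1} - (QV_A^*\Phi)^{-1}\|_2$; this yields a bound on $\|F-A\|_2$ against the ground truth, which is then doubled by the triangle inequality. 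You instead avoid the inverse-perturbation lemma entirely: you bound $\|S_A\|_2$ uniformly via a right inverse of $V_A^*X$, introduce the auxiliary interpolant $C_A V_0^*$ with $C_A = U_A S_A P_A^*$, show it is an $\eta$-approximation of $A$, and then use the sketch constraint $(A-B)X=0$ to bound $\|C_A - C_B\|_2$ directly by right-inverting $V_0^*X$. Your route goes straight to the diameter without passing through $F$; it is slightly more self-contained (no external perturbation lemma) at the cost of not producing the intermediate estimate $\|F-A\|_2$, which the paper's version provides for free. Your ``cosmetic'' conversion step is also correct: $1/(\sigma_- - \eta) \le c/(1-c\eta)$ reduces to $c\sigma_- = \sigma_+/\sigma_- \ge 1$, and $1+1/\sigma_- \le 2c$ uses $\sigma_- \le 1 \le \sigma_+/\sigma_-$, giving exactly the stated $4c^2\eta\|FX\|_2/(1-c\eta)$. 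The only spot worth flagging when you flesh this out is that $\sigma_- - \eta > 0$ must be justified before right-inverting $V_A^*X$: it follows from $c\eta<1$ together with $c \ge 1/\sigma_-$.
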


\begin{proof}
    Recalling the $X$ has orthonormal columns and letting $\Phi = X X^* V_0$, we observe that
    \[
        F = F \Phi (V_0^* \Phi)^{-1} V_0^*.
    \]
    This is easily verified using the SVD of $F$. We note that $V_0^* \Phi = V_0^* X X^* V_0$ is invertible because $\rank(F X) = \rank(F) = k$.

    Suppose that $A \in \Omega_{F,X}^{\epsilon}$ and let $A = U_A \Sigma_A V_A^*$ be a slim SVD.
    Then $
        A = F \Phi (V_A^* \Phi)^{-1} V_A^*$,
    as one can verify using the SVD of $A$ and the identity $F \Phi = A \Phi$.
    For any invertible $k\times k$ matrix $Q$ (later on $Q$ will be orthogonal), $A = F \Phi (Q V_A^* \Phi)^{-1} Q V_A^*$. By the triangle inequality,
    \begin{equation} \label{eqn:triangle_estimate_for_A0_minus_A}
        \| F - A \|_2
        \leq \| F \Phi \|_2 \| (V_0^* \Phi)^{-1} \|_2 \| V_A Q^* - V_0 \|_2
        + \| F \Phi \|_2 \| (V_0^* \Phi)^{-1} - (Q V_A^* \Phi)^{-1} \|_2.
    \end{equation}
    We use a perturbation argument to show that $V_A^* \Phi$ is invertible, to choose $Q$, and to bound each term in the above equation.

    By \cref{lem:V_minus_UQ} and \cref{eq:minimizer_of_A_minus_Q}, there exist orthogonal matrices $Q_0$ and $Q_A$ satisfying $\| V_0 - U_0 Q_0 \| \leq \sqrt{2\delta}$ and $\| V_A - U_A Q_A \| \leq \sqrt{2\epsilon}$.
    Since $\Range(U_A) = \Range(U_0)$ there is an orthogonal matrix $\tilde{Q}$ such that $U_A Q_A = U_0 \tilde{Q}$.
    Letting $Q = Q_0 \tilde{Q}$ we obtain
    \begin{equation} \label{eq_diff_pert}
        \| V_A Q^* - V_0 \|_2 =
        \| V_A - V_0 Q \|_2
        \leq \| V_A - U_A Q_A \| + \| U_0 \tilde{Q} - V_0 Q_0 \tilde{Q} \|_2
        \leq \sqrt{2\epsilon} + \sqrt{2\delta}
    \end{equation}
    A classical perturbation bound for the difference between the inverse of two matrices $S$ and $T$ is~\citep[Ch.~1, Eq.~4.24]{Kato1980perturbation}
    \[\|S^{-1}-T^{-1}\|_2\leq \frac{\|S-T\|_2\|T^{-1}\|_2^2}{1-\|S-T\|_2\|T^{-1}\|_2}.\]
    Applying this with $S = Q V_A^* \Phi$ and $T = V_0^* \Phi$ gives
    \[
        \| (Q V_A^* \Phi)^{-1} - (V_0^* \Phi)^{-1} \|_2
        \leq \frac{\big(\sqrt{2\epsilon} + \sqrt{2\delta}\big)\| \Phi \|_2 \| (V_0^* \Phi)^{-1} \|_2^2}{1 - \big(\sqrt{2\epsilon} + \sqrt{2\delta}\big)\| \Phi \|_2 \| (V_0^* \Phi)^{-1} \|_2 },
    \]
    where
    \[\|(Q V_A^* \Phi) - (V_0^* \Phi)\|_2\leq \|V_A Q^* - V_0\|_2 \|\Phi\|_2\leq \big(\sqrt{2\epsilon} + \sqrt{2\delta}\big)\|\Phi\|_2,\]
    by \cref{eq_diff_pert}. A quick computation shows that
    \[
        \| \Phi \|_2 \| (V_0^* \Phi)^{-1} \|_2
        = \| X^* V_0 \|_2 \| (V_0^* X X^* V_0)^{-1} \|_2
        = \frac{\sigma_{\max}(X^* V_0)}{\sigma_{\min}(X^* V_0)^2}
        = c,
    \]
    and that $\| F \Phi \|_2 \leq \| F X \|_2 \| X^* V_0 \|_2 = \| F X \|_2 \| \Phi \|_2$. The condition that $c \big( \sqrt{2\epsilon} + \sqrt{2\delta} \big) < 1$ ensures that both $Q V_A^* \Phi$ and $V_A^* \Phi$ are invertible.
    Using these results in \cref{eqn:triangle_estimate_for_A0_minus_A} and collecting terms yields
    \[
        \| F - A \|_2
        \leq \| F X \|_2 \big( \sqrt{2\epsilon} + \sqrt{2\delta} \big) \left[ c + \frac{c^2}{1 - c \big(\sqrt{2\epsilon} + \sqrt{2\delta}\big) } \right]
        \leq 2 \| F X \|_2 \left[\frac{ c^2\big( \sqrt{2\epsilon} + \sqrt{2 \delta}\big) }{1 - c \big( \sqrt{2\epsilon} + \sqrt{2\delta}\big) }\right].
    \]
    Applying the triangle inequality $\| A - B \|_2 \leq \| F - A \|_2 + \| F - B \|_2$ with $A, B \in \Omega_{F,X}^{\epsilon}$ completes the proof.
\end{proof}

To lower bound the diameter of $\Omega_{F, X}^{\epsilon}$, we must show the existence of two matrices in the set that are at least some distance apart. We generate these matrices by perturbing $F$.
Our argument makes use of the gap between $\epsilon$ and $\delta$, reflecting the gap in our prior knowledge about the near-symmetry of $F$.

\begin{theorem}[Lower bound] \label{thm:lower_bound_on_Omega_eps}
    Let $0\leq \delta \leq \epsilon <1$, $F\in M_n(\C)$ be a $\delta$-near-symmetric rank-$k$ matrix, and $X \in \C^{n \times s}$ be a test matrix with $k \leq s < n$ orthonormal columns such that $\rank(FX) = k$.
    Then, the diameter of $\Omega_{F, X}^\epsilon$ is lower bounded as follows:
    \begin{equation} \label{eq_lower_bound}
        \sup_{A,B \in \Omega_{F,X}^{\epsilon}} \| A - B \|_2 \geq
        2 \left( \frac{\sigma_{\min}(F)^2}{\sigma_{\max}(F)} \right) \frac{\arccos(1-\epsilon) - \arccos(1-\delta)}{\pi/2 + \arccos(1-\epsilon) - \arccos(1-\delta)}.
    \end{equation}
\end{theorem}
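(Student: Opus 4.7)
The plan is to prove the lower bound by exhibiting two matrices $A, B \in \Omega_{F,X}^{\epsilon}$ whose spectral distance matches (up to absolute constants) the right-hand side of \cref{eq_lower_bound}. The natural first choice is $A = F$, which belongs to $\Omega_{F,X}^{\epsilon}$ because $\delta \leq \epsilon$ forces $F$ to be $\epsilon$-near-symmetric. The task then reduces to constructing a $B \in \Omega_{F,X}^{\epsilon}$ that sits far from $F$ in the spectral norm, while continuing to satisfy the sketch and near-symmetry constraints.

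To construct $B$, I would fix $F$'s left singular vectors $U_0$ and singular values $\Sigma_0$, and modify only the right singular vectors, setting $B = U_0 \Sigma_0 V_B^*$ for an $n\times k$ matrix $V_B$ with orthonormal columns that satisfies $V_B^* X = V_0^* X$. This identity both guarantees the sketch constraint $BX = FX$ and, combined with \cref{lem:V_minus_UQ}, reduces $\epsilon$-near-symmetry of $B$ to the single inequality $\sigma_{\min}(U_0^* V_B) \geq 1 - \epsilon$. Because $s < n$, the subspace $X^\perp$ is nontrivial, so I can pick a unit vector $\eta \in X^\perp$ and rotate $V_0$ inside a carefully chosen two-plane that respects the sketch constraint, obtaining a one-parameter family $V_B(\phi)$ indexed by a rotation angle $\phi$.

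The crucial step is to control how much of the near-symmetry budget is spent by this rotation. Tracking $\sigma_{\min}(U_0^* V_B(\phi))$ via a principal-angle triangle inequality, combined with the identification of $\sigma_{\min}(U^* V)$ with the cosine of the largest principal angle supplied by \cref{lem:V_minus_UQ}, the largest admissible principal angle between $\Range(U_0)$ and $\Range(V_B(\phi))$ grows from at most $\arccos(1-\delta)$ at $\phi = 0$ up to $\arccos(1-\epsilon)$ at the edge of the near-symmetry constraint. The sketch constraint prevents arbitrarily efficient use of this budget and, after optimizing over admissible rotation directions, yields the trigonometric factor $\theta/(\pi/2 + \theta)$ with $\theta = \arccos(1-\epsilon) - \arccos(1-\delta)$; the $\pi/2$ in the denominator reflects the worst-case angular geometry of rotations of $V_0$ confined to lie in $X^\perp$.

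Finally, I would convert this rotation angle into a lower bound on $\|F - B\|_2 = \|U_0 \Sigma_0 (V_0 - V_B)^*\|_2$. The conditioning of the sketch, through $\sigma_{\max}(X^* V_0)/\sigma_{\min}(X^* V_0)$ and ultimately controlled by the singular value spread of $F$ itself, contributes a factor $\sigma_{\min}(F)/\sigma_{\max}(F)$ beyond the obvious $\sigma_{\min}(F)$ coming from the leading factor $\Sigma_0$, yielding the prefactor $\sigma_{\min}(F)^2/\sigma_{\max}(F)$ in \cref{eq_lower_bound}. The main technical obstacle is exactly this last coupling: one must show that the sketch and near-symmetry constraints together force the worst-case $B$ to lie on a specific rotation family along which the spectral distance picks up exactly the stated trigonometric prefactor, with the correct dependence on the singular value spread of $F$ rather than a loss from dimensional constants.
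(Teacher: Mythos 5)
Your proposal shares the paper's overall strategy---exploit the freedom in $\Range(X)^{\perp}$ to perturb $F$ within the sketch constraint while tracking the near-symmetry budget through principal angles---but the construction you choose is genuinely different and, as written, has gaps that would prevent the argument from closing.

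The paper does not attempt to keep $U_0$ and $\Sigma_0$ fixed. Instead it takes a multiplicative perturbation $B = F(I+E)$ with $EX = 0$, observes $\Range(B) = \Range(F)$, and then applies Wedin's theorem to bound $\theta_{\max}(V_F, V_B)$ in terms of $\|E\|_2$ and the singular-value spread of $F$. Your construction $B = U_0 \Sigma_0 V_B^*$ with $V_B$ orthonormal and $V_B^* X = V_0^* X$ is more constrained than you appreciate: a simple two-plane rotation $V_B(\phi) = V_0 \cos\phi + W\sin\phi$ with a direction $\eta \in X^{\perp}$ does \emph{not} preserve $V_B^* X = V_0^* X$, because $V_0^* X \neq 0$ (indeed $\rank(V_0^* X) = k$) and the $\cos\phi$ prefactor shrinks it. Admissible $V_B$ satisfy the coupled constraints $(V_B - V_0)^* X = 0$ and $V_B^* V_B = I$, which together give a quadratic system, not a one-parameter rotation family. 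You would need to exhibit a feasible path on the Stiefel manifold intersected with an affine subspace, and you have not done so.

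The more substantive issue is your account of the prefactor $\sigma_{\min}(F)^2/\sigma_{\max}(F)$ and of the denominator $\pi/2 + \theta$. You attribute the extra $\sigma_{\min}(F)/\sigma_{\max}(F)$ to the conditioning of the sketch $\sigma_{\max}(X^* V_0)/\sigma_{\min}(X^* V_0)$, ``ultimately controlled by the singular value spread of $F$,'' but that conditioning is a property of the test matrix and its alignment with $V_0$; it is not controlled by $F$'s singular values. In the paper the factor $\sigma_{\min}(F)/\sigma_{\max}(F)$ comes from inverting Wedin's bound $\|\sin\Theta(V_F, V_B)\|_\F \leq \sigma_{\max}(F)\|E\|_2/(\sigma_{\min}(F) - \sigma_{\max}(F)\|E\|_2)$, and the $\pi/2$ in the denominator is introduced by the elementary inequality $\theta \leq (\pi/2)\sin\theta$, not by ``worst-case angular geometry of rotations confined to $X^{\perp}$.'' Without Wedin's theorem (or an equivalent singular-subspace perturbation bound), this factor has no derivation in your argument. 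Finally, taking $A = F$ rather than $A = F(I-E)$ and $B = F(I+E)$ drops the free factor of $2$ in the diameter bound; as written your approach would need $\|F - B\|_2$ to be twice as large to recover \cref{eq_lower_bound}.
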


\begin{proof}
    We begin the proof by selecting a matrix $E \in M_n(\C)$, satisfying $E X = 0$ and
    \begin{equation} \label{eqn:choice_of_E}
        \| E \|_2 = \left(\frac{\sigma_{\min}(F)}{\sigma_{\max}(F)}\right) \frac{\arccos(1-\epsilon) - \arccos(1-\delta)}{\pi/2 + \arccos(1-\epsilon) - \arccos(1-\delta)},
    \end{equation}
    which is well defined since $\epsilon \geq \delta$.
    The constraint $E X = 0$ is satisfied by choosing the rows of $E$ in $\Range(X)^{\perp}$, which is nontrivial because $s < n$. Letting $B = F(I + E) = U_B \Sigma_B V_B^\top$, we aim to show that $B\in \Omega_{F,X}^{\epsilon}$. First, we observe that $\Range(B) = \Range(F)$, that is $\Range(U_B) = \Range(U_F)$, because $\Range(B) \subset \Range(F)$ and $k = \rank(F) \geq \rank(B) \geq \rank(B X) = \rank(F X) = k$. Following \cref{eq:minimizer_of_A_minus_Q}, we must show that
    \[
        1 - \sigma_{\min}(U_B^\top V_B) \leq \epsilon.
    \]
    The identity $\sigma_i(U_B^\top V_B) = \cos(\theta_i(U_B, V_B))$, where $\theta_i(U_B, V_B)$ denotes the $i$th principal angle between subspaces $\Range(U_B)$ and $\Range(V_B)$ (see \cite{Bjorck1973numerical}), yields
    \[
        1 - \sigma_{\min}(U_B^\top V_B) = 1 - \cos(\theta_{\max}(U_B, V_B)),
    \]
    because $x\mapsto\cos(x)$ is a decreasing function over the interval $[0, \pi/2]$. Therefore, it suffices to show that $\theta_{\max}(U_B, V_B) \leq \arccos(1-\epsilon)$.
    Thanks to the main result of \cite{Qui2005unitarily}, the largest principal angle $\theta_{\max}$ is a unitarily-invariant metric on the Grassmannian consisting of $k$-dimensional subspaces of $\R^n$.
    In particular, it satisfies the triangle inequality:
    \[
        \theta_{\max}(U_B, V_B)
        = \theta_{\max}(U_F, V_B)
        \leq \theta_{\max}(U_F, V_F) + \theta_{\max}(V_F, V_B).
    \]
    Combining the assumption on $F$ and~\cref{eq:minimizer_of_A_minus_Q}, we have
    \[
        1 - \delta
        \leq \sigma_{\min}(U_F^\top V_F)
        = \cos(\theta_{\max}(U_F, V_F)) \implies
        \theta_{\max}(U_F, V_F)\leq \arccos(1-\delta),
    \]
    as $\theta\mapsto \arccos(\theta)$ is a decreasing function. Therefore,
    \[
        \theta_{\max}(U_B, V_B)
        \leq \arccos(1-\delta) + \theta_{\max}(V_F, V_B).
    \]
    Using Wedin's theorem~\citep{wedin1972perturbation}, we obtain
    \[
        \theta_{\max}(V_F, V_B)
        \leq \frac{\pi}{2} \sin(\theta_{\max}(V_F, V_B))
        \leq \frac{\pi}{2} \| \sin (\Theta(V_F, V_B)) \|_\F
        \leq \frac{(\pi/2) \sigma_{\max}(F) \|E\|_2}{\sigma_{\min}(F) - \sigma_{\max}(F)\|E\|_2},
    \]
    which means that
    \[
        \theta_{\max}(U_B, V_B)
        \leq \arccos(1-\delta) + \frac{(\pi/2) \sigma_{\max}(F) \|E\|_2}{\sigma_{\min}(F) - \sigma_{\max}(F)\|E\|_2}.
    \]
    Inserting the expression for $\| E \|_2$ given by \cref{eqn:choice_of_E} yields $\theta_{\max}(U_B, V_B) \leq \arccos(1-\epsilon)$, which shows that $B \in \Omega_{F,X}^{\epsilon}$. Since the same argument shows that $F(I - E) \in \Omega_{F, X}^\epsilon$, we obtain a lower bound on the diameter as follows:
    \[
        \diam(\Omega_{F,X}^{\epsilon})
        \geq 2 \left( \frac{\sigma_{\min}(F)^2}{\sigma_{\max}(F)} \right) \frac{\arccos(1-\epsilon) - \arccos(1-\delta)}{\pi/2 + \arccos(1-\epsilon) - \arccos(1-\delta)},
    \]
    which concludes the proof.
\end{proof}

\begin{remark}[Orthonormal test matrices]
    Almost every test matrix $X \in \C^{n \times s}$ (with respect to the Lebesgue measure) has linearly independent columns. Thus, these columns can be orthonormalized using the Gram--Schmidt process. Therefore, without loss of generality, we can assume that the test matrix in~\cref{thm:upper_bound_on_Omega_eps,thm:lower_bound_on_Omega_eps} has orthonormal columns. More formally, if $\tilde X  \in \C^{n \times s}$ is a matrix with linearly independent columns, recovering a matrix $A$ from the matrix-vector products $A \tilde X = Y$ is equivalent to recovering $AQ = YR^{\dagger} $, where $\tilde X = QR$ is a QR factorization. Thus, our results extend to the general matrix recovery model from matrix-vector products.
\end{remark}

\begin{remark}(Sharpness for symmetric $F$ and areas for improvement)
    There are limitations to our upper and lower bounds on the diameter of $\Omega_{F,X}^{\epsilon}$ that we hope will be resolved by future works.
    First, our upper bound becomes infinite as $\epsilon$ increases to a finite value, whereas our lower bound saturates as $\epsilon$ is increased.
    On the other hand, our lower bound vanishes when $\epsilon = \delta$, while the upper bound takes a positive value.
    Despite these issues, when $F$ is symmetric, i.e., when $\delta = 0$, our bounds yield
    \[
        \Theta(\sqrt{\epsilon})
        = \frac{2 c^{-1} \arccos(1-\epsilon)}{\pi/2 + \arccos(1-\epsilon)}
        \leq \sup_{A,B \in \Omega_{F,X}^{\epsilon}} \| A - B \|_2
        \leq \frac{4 \| F X \| c^2 \sqrt{2\epsilon}}{1 - c\sqrt{2\epsilon}}
        = \mcal{O}(\sqrt{\epsilon}),
    \]
    as $\epsilon \to 0$, meaning that they are asymptotically sharp in this regime.
    Sharpening our bounds when $F$ is asymmetric is particularly challenging because little is known about the properties of $\Omega_{F,X}^{\epsilon}$. To our knowledge, our work is the first to define such a set, and we hope that future works will investigate this set more thoroughly.
\end{remark}

The upper and lower bounds reveal that the uncertainty about $F$ given queries of its action is directly related to the uncertainty about the symmetry of its left and right singular subspaces. For example, our ability to recover a symmetric rank-$k$ matrix using $k \leq s < n$ queries is fundamentally limited by our prior knowledge about the proximity of $\Range(F)$ and $\Range(F^*)$ because there are many asymmetric matrices with the same rank that satisfy the same sketching constraints. As described above, generic $n\times s$ test matrices $X$ are capable of revealing the range of $F$, meaning that the uncertainty about $F$ really comes from a lack of prior knowledge about the range of $F^*$. This highlights why the action of the adjoint is essential for efficient matrix recovery without any prior information about an operator's symmetry, or more generally about the range of its adjoint.
In the following section we turn to the study of PDEs, where we show that regularity estimates provide useful prior information about the adjoint that can be leveraged to provide convergent adjoint-free approximation methods.

\section{Fourier Sampling for Differential Operators}\label{sec:FourierSampling}

This section shows how without querying the adjoint, one can construct finite-dimensional approximations of certain non-self-adjoint infinite-dimensional compact operators with error bounds. We develop our main results in a very general abstract setting before applying them to approximate solution operators for uniformly elliptic PDEs, among other operators with regularity (i.e., ``smoothing'') properties.
This leads to practical algorithms that can be applied in various settings. The main idea is to leverage known regularity properties of the adjoint operator along with guaranteed approximation properties of smooth functions in Fourier bases, or more generally, in eigenfunction bases of a suitably chosen self-adjoint operator, e.g.~the Laplace--Beltrami operator (LBO). This self-adjoint operator, therefore, serves as a prior to approximate the non-self-adjoint operator. We obtain finite-rank approximations with guaranteed error rates in the operator norm by querying the forward action of a non-self-adjoint operator with the leading eigenfunctions of the prior operator. 
Our approach is closely related to spectral Galerkin methods for PDEs \cite{Canuto2006spectral}, except that we project the solution operator from one side, rather than the PDE from both sides.
The key function approximation results we develop for these purposes extend standard Fourier approximation results \cite{Canuto1982approximation} to non-rectangular domains, and they extend results by \cite{Aflalo2013spectral, Aflalo2015optimality} to higher degrees of regularity and domains with boundaries.
Similar techniques have also been used by \cite{Friz1999smooth, Robinson2008topological} to study the ``thickness exponents'' of subsets of Sobolev spaces, arising as attractors for solutions of nonlinear spatiotemporal PDEs.

Consider a compact operator $A: \H \to \H'$ between Hilbert spaces $\H$ and $\H'$, and define $P_n: \H \to \H$ as the orthogonal projection onto the subspace spanned by a set of $n$ orthonormal vectors $\varphi_1, \ldots, \varphi_n\in \H $.
Letting $F_n : \R^n \to \H$ be the map given by
\[
    F_n : (x_1, \ldots, x_n) \mapsto \sum_{k=1}^n x_k \varphi_k,
\]
we obtain the operator $Y:\R^n \to \H$ given by $Y = A F_n$ by evaluating $A$ at $\varphi_1, \ldots, \varphi_n$. Our rank-$n$ approximation of the operator $A$ takes the form
\[
    A P_n = Y F_n^*.
\]
This approximation of $A$ can be formed without querying the adjoint operator $A^*$ and comes with error bounds provided we have some prior information about $\Range(A^*)$.

We select $\{\varphi_k\}_{k=1}^n$ to be the first $n$ eigenfunctions of an unbounded self-adjoint operator $L: D(L) \subset \H \to \H$, which serves as a prior operator for approximating $A$. Hence, the projection $P_n$ enjoys guaranteed approximation properties for functions in $D(L)$. Here, $D(L)$ is the domain of the operator $L$, satisfying
\[D(L)=\{f\in \H,\quad L f \in \H\}.\]
Examples of such operators are provided later in \cref{sec:manifold_without_boundary_case,sec:domain_with_boundary_case}. We begin with the following lemma, which can be viewed as an abstract generalization of a result by~\cite{Aflalo2013spectral}.
A similar approach has been used to obtain approximation results for orthogonal polynomials (cf. proof of Theorem~2.3 in \cite{Canuto1982approximation} and equation~5.4.11 in \cite{Canuto2006spectral}).

\begin{lemma} \label{lem:function_projection}
    Let $\H$ be a separable Hilbert space and $L: D(L)\subset \H \to \H$ be a self-adjoint operator whose domain $D(L)$, endowed with the graph norm, is compactly embedded in $\H$.
    $\H$ admits an orthonormal basis of eigenfunctions $\{ \varphi_k \}_{k=1}^{\infty}$ of $L$ with eigenvalues $\lambda_k \in \R$ ordered in increasing magnitude as $| \lambda_1 | \leq | \lambda_2 |\leq \cdots$, with $| \lambda_k | \to \infty$ as $k \to \infty$.
    Let $n\geq 1$ and $P_n : \H \to \H$ denote the orthogonal projection onto $\vspan\{ \varphi_k \}_{k=1}^n$.
    If $\lambda_{n+1} \neq 0$, then
    \begin{equation} \label{eqn:function_projection_bound}
        \| f - P_n f \|_{\mcal{H}} \leq \frac{1}{| \lambda_{n+1} |} \| L f \|_{\mcal{H}}, \quad f \in D(L),
    \end{equation}
    and equality is achieved with $f = \varphi_{n+1}$.
\end{lemma}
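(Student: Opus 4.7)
The plan is to reduce the statement to a diagonal computation in the eigenbasis of $L$, with the only non-routine step being the verification that such an eigenbasis exists under the stated compactness hypothesis.

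First I would establish the spectral decomposition. Because $L$ is self-adjoint, its spectrum is real, and because $D(L)$ with the graph norm embeds compactly into $\H$, the resolvent $(L - \mu I)^{-1}$ is a compact self-adjoint operator for any $\mu$ in the resolvent set (which is nonempty since $L$ is self-adjoint and closed, so all non-real $\mu$ work). The classical spectral theorem for compact self-adjoint operators then yields an orthonormal basis of eigenfunctions of the resolvent, which are also eigenfunctions of $L$, with real eigenvalues $\lambda_k$ whose only possible accumulation point is infinity; ordering them by magnitude gives the sequence $\{\varphi_k,\lambda_k\}_{k=1}^\infty$ with $|\lambda_k|\to\infty$ as claimed.

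Next, for any $f \in D(L)$, I expand $f = \sum_{k=1}^\infty c_k \varphi_k$ with $c_k = \langle f, \varphi_k\rangle_{\H}$. Since $f\in D(L)$ and $L$ is closed with this eigenbasis in its domain, one has $Lf = \sum_{k=1}^\infty \lambda_k c_k \varphi_k$ in $\H$, and Parseval gives
\[
\|Lf\|_{\H}^2 = \sum_{k=1}^\infty |\lambda_k|^2 |c_k|^2.
\]
The projection satisfies $f - P_n f = \sum_{k=n+1}^\infty c_k \varphi_k$, hence $\|f-P_n f\|_{\H}^2 = \sum_{k=n+1}^\infty |c_k|^2$. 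Since $|\lambda_k| \geq |\lambda_{n+1}| > 0$ for all $k \geq n+1$ by the monotone ordering and the assumption $\lambda_{n+1}\neq 0$, I can insert the weight factor to obtain
\[
\|f - P_n f\|_{\H}^2 \leq \frac{1}{|\lambda_{n+1}|^2}\sum_{k=n+1}^\infty |\lambda_k|^2 |c_k|^2 \leq \frac{1}{|\lambda_{n+1}|^2}\|Lf\|_{\H}^2,
\]
which, after taking square roots, yields \eqref{eqn:function_projection_bound}. Sharpness follows by taking $f=\varphi_{n+1}$: then $P_n f = 0$, so $\|f-P_n f\|_{\H}=1$ and $\|Lf\|_{\H}=|\lambda_{n+1}|$, turning the inequality into equality.

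The only real subtlety, which I would treat as the main technical obstacle, is justifying the eigenbasis step cleanly: one must invoke that compact embedding $D(L)\hookrightarrow \H$ together with self-adjointness of $L$ implies compactness of some resolvent, and then argue that the resulting eigenbasis of a resolvent is in fact a basis of eigenfunctions for $L$ itself with real eigenvalues tending to infinity in modulus. Everything after that is a one-line Parseval calculation and a weighted $\ell^2$ bound, so I do not expect any further difficulty.
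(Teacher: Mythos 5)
Your approach is essentially the one in the paper: deduce compactness of a resolvent from the compact embedding of $D(L)$, extract an eigenbasis via the spectral theorem, and finish with Parseval plus the weighted $\ell^2$ estimate. There is, however, a small but real slip in the spectral step. You note that the resolvent set is nonempty because all non-real $\mu$ lie in it, but then invoke the spectral theorem for compact \emph{self-adjoint} operators on $(L-\mu I)^{-1}$; for non-real $\mu$ this resolvent is not self-adjoint (its adjoint is $(L-\bar\mu I)^{-1}$). It is compact and \emph{normal}, so you should cite the spectral theorem for compact normal operators, as the paper does for $R_L(i)$, or first argue from discreteness of the spectrum that some real $\mu_0$ lies in the resolvent set and work with $R_L(\mu_0)$, which genuinely is compact self-adjoint. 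With that fix, the rest of the argument (expansion of $f$ and $Lf$ in the eigenbasis, the bound $|\lambda_k|\geq|\lambda_{n+1}|$ for $k\geq n+1$, and sharpness at $f=\varphi_{n+1}$) matches the paper's.
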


\begin{proof}
    For any $\lambda \in \C$ belonging to the resolvent set $\rho(L)$~\citep[Def.~1.16]{conway2019course}, we first show the resolvent operator $R_L(\lambda) = (\lambda I - L)^{-1}$ is compact, as it is a bounded operator into $D(L)$, which is compactly embedded in $\H$.
    Specifically, let $\imath_{D(L)}: D(L) \hookrightarrow \H$ denote the inclusion map, which is compact by assumption. Since $\Range(R_L(\lambda)) \subset D(L)$ we may define $\tilde{R}_L(\lambda) : \H \to D(L)$ so that $R_L(\lambda) = \imath_{D(L)} \circ \tilde{R}_L(\lambda)$.
    To prove that $R_L(\lambda)$ is compact, it suffices to show that $\tilde{R}_L(\lambda)$ is bounded. For any $f \in \H$, we have $\|f\|_{L}^2=\|f\|_{\mcal{H}}^2+\|L f\|_{\mcal{H}}^2$ by definition of the graph norm, which implies that
    \begin{align*}
        \| \tilde{R}_L(\lambda) f \|_{L}^2 &
        =  \| R_L(\lambda) f \|_{\mcal{H}}^2 + \| L R_L(\lambda) f \|_{\mcal{H}}^2                                                                                                                \\
                                           & \leq \| R_L(\lambda) f \|_{\mcal{H}}^2 + \big(\| \underbrace{L R_L(\lambda) f - \lambda f}_{-f} \|_{\mcal{H}} + |\lambda| \| f \|_{\mcal{H}} \big)^2 \\
                                           & \leq \left[ \| R_L(\lambda) \|_{\mcal{H}\to\mcal{H}}^2 + (1 + |\lambda|)^2 \right] \| f \|_{\mcal{H}}^2,
    \end{align*}
    meaning that $\tilde{R}_L(\lambda)$ is bounded, hence $R_L(\lambda)$ is compact.

    By \citep[Thm.~VIII.3]{Reed1980functional}, the imaginary unit $i$ lies in the resolvent set $\rho(L)$, and so $R_L(i)$ is compact. Following the proof of \citep[Thm.~VIII.4]{Reed1980functional}, $R_L(i)$ commutes with its adjoint $R_L(i)^* = (-iI - L^*)^{-1} = (-iI - L)^{-1} = R_L(-i)$ and is therefore a normal operator.
    By the spectral theorem for compact normal operators, $\H$ admits an orthonormal basis of eigenfunctions $\{ \varphi_k \}_{k=1}^{\infty}$ of $R_L(i)$ with eigenvalues $\mu_k\in\C$. Since $\Null R_L(i) = \{ 0 \}$, all of the $\mu_k$ are nonzero. The identity $(iI-L)^{-1} \varphi_k = \mu_k \varphi_k$ yields
    \[
        L \varphi_k = \left( \frac{i\mu_k - 1}{\mu_k} \right) \varphi_k,
    \]
    which shows that $\varphi_k$ is an eigenfunctions of $L$ with eigenvalue $\lambda_k = (i\mu_k - 1)/\mu_k$. Since $R_L(i)$ is compact, its eigenvalues satisfy $\mu_k \to 0$ as $k\to\infty$, meaning that $|\lambda_k| \to \infty$. Moreover, the eigenvalues $\lambda_k$ are real because $L$ is self-adjoint.

    Now consider a function $f \in D(L)$. By Parseval's theorem, we have
    \[
        \| L f \|_{\mcal{H}}^2
        = \sum_{k=1}^{\infty} | \langle \varphi_k, \ L f \rangle_{\mcal{H}} |^2
        = \sum_{k=1}^{\infty} | \lambda_k |^2 | \langle \varphi_k, \ f \rangle_{\mcal{H}}|^2.
    \]
    We follow an argument similar to \citep{Aflalo2013spectral} to obtain
    \begin{align*}
        \| L (f - P_n f) \|_{\mcal{H}}^2 & = \sum_{k=1}^{\infty} | \lambda_k |^2 | \langle \varphi_k, \ (I-P_n) f \rangle_{\mcal{H}} |^2 = \sum_{k=n+1}^{\infty} | \lambda_k |^2 | \langle \varphi_k, \ (I-P_n) f \rangle_{\mcal{H}} |^2 \\
                                         & \geq |\lambda_{n+1}|^2 \sum_{k=n+1}^{\infty} | \langle \varphi_k, \ (I-P_n) f \rangle_{\mcal{H}} |^2
        = |\lambda_{n+1}|^2 \| f - P_n f \|_{\mcal{H}}^2,
    \end{align*}
    and
    \[
        \| L (f - P_n f) \|_{\mcal{H}}^2 = \sum_{k=n+1}^{\infty} | \lambda_k |^2 | \langle \varphi_k, \ f \rangle_{\mcal{H}} |^2 \leq \| L f \|_{\mcal{H}}^2.
    \]
    Combining these inequalities yields \cref{eqn:function_projection_bound}.
\end{proof}

Thanks to the preliminary approximation result in~\cref{lem:function_projection}, we can derive an error bound between $A$ and its finite-rank approximation, $A P_n$, by choosing the operator $L$ so that $\Range(A^*) \subset D(L)$. In this case, the operators $P_n$ and $A P_n$ can be written as
\[P_n f = \sum_{k=1}^n \varphi_k\langle \varphi_k,\ f\rangle_{\mcal{H}}, \quad  A P_n f = \sum_{k=1}^n A\varphi_k\langle \varphi_k,\ f\rangle_{\mcal{H}}, \quad f\in \H.\]

\begin{theorem} \label{thm:operator_projection}
    Let $\H'$ be a Hilbert space. Under the same assumptions as \cref{lem:function_projection}, if $A: \H \to \H'$ is a bounded linear operator with $\Range(A^*) \subset D(L)$, then $LA^*$ is bounded. Moreover, for any $n\geq 1$ satisfying $\lambda_{n+1} \neq 0$, we have
    \begin{equation} \label{eqn:operator_projection_bound}
        \| A - A P_n \|_{\mcal{H}\to\mcal{H}'} \leq \frac{1}{| \lambda_{n+1} |} \| L A^* \|_{\mcal{H}'\to\mcal{H}},
    \end{equation}
    with respect to the induced norms of operators $\mcal{H}\to\mcal{H}'$ and $\mcal{H}'\to\mcal{H}$.
    Equality for all such $n$ is achieved by the compact self-adjoint operator $A = L^{\dagger} : \H \to \H$ defined by
    \begin{equation} \label{eqn:operator_achieving_equality_in_projection_bound}
        L^{\dagger}: f \mapsto \sum_{\substack{k=1\\\lambda_k \neq 0}}^\infty \frac{1}{\lambda_k} \varphi_k \langle \varphi_k,\ f \rangle_{\mcal{H}}.
    \end{equation}
\end{theorem}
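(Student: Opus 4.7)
\textbf{Proof plan for \cref{thm:operator_projection}.}

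My first task will be to verify that $LA^\ast : \H' \to \H$ is bounded. Since $A$ is bounded, $A^\ast : \H' \to \H$ is bounded, and by hypothesis $\Range(A^\ast) \subset D(L)$, so the composition $LA^\ast$ is defined on all of $\H'$. I will invoke the closed graph theorem: because $L$ is self-adjoint it is closed, and the composition of a closed operator with a bounded one (in this order) remains closed. Concretely, if $g_n \to g$ in $\H'$ and $LA^\ast g_n \to h$ in $\H$, then boundedness of $A^\ast$ gives $A^\ast g_n \to A^\ast g$, and closedness of $L$ forces $A^\ast g \in D(L)$ and $LA^\ast g = h$. Thus $LA^\ast$ is closed on the whole Hilbert space $\H'$, hence bounded.

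For the inequality itself, the main idea is to swap the projection onto the adjoint side and then apply \cref{lem:function_projection} pointwise. Since $P_n$ is an orthogonal projection, it is self-adjoint, so $(A - AP_n)^\ast = (I - P_n)A^\ast$, and taking adjoints preserves operator norms between Hilbert spaces. Hence
\[
    \| A - AP_n \|_{\H \to \H'} = \|(I - P_n)A^\ast\|_{\H' \to \H}.
\]
For any $g \in \H'$, the assumption $\Range(A^\ast) \subset D(L)$ lets me apply \cref{lem:function_projection} to $f = A^\ast g$, yielding
\[
    \|(I - P_n)A^\ast g\|_{\mcal H} \leq \frac{1}{|\lambda_{n+1}|}\|L A^\ast g\|_{\mcal H} \leq \frac{1}{|\lambda_{n+1}|}\|LA^\ast\|_{\H' \to \H}\,\|g\|_{\H'}.
\]
Taking the supremum over unit vectors $g$ completes the bound \cref{eqn:operator_projection_bound}.

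For the sharpness claim, I will work directly with $A = L^\dagger$ defined by \cref{eqn:operator_achieving_equality_in_projection_bound}. A quick check using Parseval shows that $L^\dagger$ is bounded, self-adjoint and compact, and its range lies in the closed span of $\{\varphi_k : \lambda_k \neq 0\} \subset D(L)$, so the hypothesis $\Range((L^\dagger)^\ast) \subset D(L)$ is satisfied. On the one hand, $L L^\dagger$ is the orthogonal projection onto $\overline{\vspan}\{\varphi_k : \lambda_k \neq 0\}$, which has operator norm $1$ (the space is nontrivial since some $\lambda_k \neq 0$ by hypothesis, e.g.~$\lambda_{n+1}$). On the other hand, $L^\dagger - L^\dagger P_n = \sum_{k \geq n+1} \lambda_k^{-1} \varphi_k \langle \varphi_k,\cdot\rangle_{\mcal H}$ is diagonal in the orthonormal basis $\{\varphi_k\}$, and because $|\lambda_k|$ is nondecreasing with $|\lambda_{n+1}|$ the smallest magnitude appearing, its operator norm is exactly $1/|\lambda_{n+1}|$, attained on $\varphi_{n+1}$. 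Plugging both quantities into \cref{eqn:operator_projection_bound} produces equality, which is the desired sharpness.

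The only step that requires real care is the boundedness of $LA^\ast$, since one is composing an unbounded self-adjoint operator with a bounded one; the closed graph argument handles this cleanly provided one remembers that self-adjoint operators are automatically closed. The rest of the argument is essentially a dualization of \cref{lem:function_projection}, and the equality case reduces to a diagonal computation in the eigenbasis.
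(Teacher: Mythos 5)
Your first two steps --- boundedness of $LA^*$ via the closed graph theorem, and the inequality via $\|A - AP_n\| = \|(I-P_n)A^*\|$ combined with \cref{lem:function_projection} --- match the paper's proof essentially verbatim and are correct. The sharpness argument, however, hides a genuine gap. Your justification that $\Range(L^\dagger)\subset D(L)$ rests on the claim $\overline{\vspan}\{\varphi_k : \lambda_k \neq 0\} \subset D(L)$, which is false whenever infinitely many $\lambda_k$ are nonzero: the closed span is then a closed, infinite-dimensional subspace of $\H$ (typically all of $\H$, or of finite codimension), while $D(L)$ is a proper, merely dense subspace. Concretely, if $L$ is the Dirichlet Laplacian on $L^2(\Omega)$, the closed span equals $L^2(\Omega)$ itself, while $D(L) = H^2(\Omega)\cap H^1_0(\Omega)$. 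So this inclusion cannot be the reason $L^\dagger f$ lands in $D(L)$.

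Similarly, your assertion that $LL^\dagger$ equals the orthogonal projection onto $\overline{\vspan}\{\varphi_k : \lambda_k \neq 0\}$ implicitly pulls $L$ through the infinite series defining $L^\dagger$; this is not automatic, since $L$ is unbounded and hence not continuous. The paper closes both of these holes at once: it shows the partial sums $A_m f = \sum_{k\leq m,\ \lambda_k\neq 0} \lambda_k^{-1}\varphi_k\langle\varphi_k, f\rangle_{\mcal{H}}$ form a Cauchy sequence in the graph norm of $L$, so that closedness of $L$ gives $L^\dagger f = \lim_m A_m f \in D(L)$ and $LL^\dagger f = \lim_m LA_m f = \sum_k \varphi_k\langle\varphi_k, f\rangle_{\mcal{H}}$. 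Without this graph-norm convergence your equality case is incomplete; with it, your plan coincides with the paper's.
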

\noindent

\begin{proof}
    First, $\Range(A^*)\subset D(L)$, so the operator $L A^*$ is well-defined  on $\H'$. To show that $L A^*$ is bounded, we show that $LA^*$ and the graph of $LA^*$ are closed. Let  $f_k \to f$ in $\H'$ and $L A^* f_k \to g$ in $\H$. By the continuity of $A^*$, $A^*f_k \to A^*f$, so $LA^*$ is closed. Additionally, the closedness of $L$ implies that $A^* f \in D(L)$ and $L A^* f = g$, so the graph of $L A^*$ is closed. Then, by the closed graph theorem, $L A^*$ is bounded. 
    
    Then, we have
    \begin{align*}
        \| A - A P_n \|_{\mcal{H}\to\mcal{H}'}
         & = \|(I-P_n)A^*\|_{\mcal{H}'\to\mcal{H}}
        =\sup_{\substack{f\in\H'                   \\ \| f \|_{\mcal{H}'} \leq 1 }} \| (I-P_n) A^* f \|_{\mcal{H}} \\
         & \leq \sup_{\substack{f\in\H'            \\ \| f \|_{\mcal{H}'} \leq 1 }} \frac{1}{| \lambda_{n+1} |} \| L A^* f \|_{\mcal{H}} = \frac{1}{| \lambda_{n+1} |} \| L A^* \|_{\mcal{H}'\to\mcal{H}},
    \end{align*}
    where the inequality follows from~\cref{lem:function_projection} and the final equality is due to the fact that $\Range(A^*)\subset D(L)$. This proves the desired bound.

    To show that the equality in the bound is achieved for the operator $L^{\dagger}$ defined by \cref{eqn:operator_achieving_equality_in_projection_bound}, we must show that $\Range\big((L^{\dagger})^*\big)\subset D(L)$ and that $L$ acts element-wise on the series defining $(L^{\dagger})^*$. We first establish that $L^{\dagger}$ is well-defined and compact and denote the operators defined by the partial sums in \cref{eqn:operator_achieving_equality_in_projection_bound} as
    \[
        A_n : f \mapsto \sum_{\substack{k=1 \\ \lambda_k \neq 0}}^n \lambda_k^{-1} \varphi_k \langle \varphi_k,\ f \rangle_{\mcal{H}}, \quad n\geq 1.
    \]
    Let $1\leq m \leq n$ be sufficiently large such that $\lambda_{m+1}\neq 0$ and $f\in \H$, we have
    \[
        \| A_n f - A_m f \|_{\mcal{H}}^2
        = \sum_{k=m+1}^n | \lambda_k |^{-2} | \langle \varphi_k,\ f \rangle_{\mcal{H}} |^2
        \leq | \lambda_{m+1} |^{-2} \sum_{k=m+1}^{\infty} | \langle \varphi_k,\ f \rangle |_{\mcal{H}}^2
        \leq | \lambda_{m+1} |^{-2} \| f \|_{\mcal{H}}^2.
    \]
    Hence, $\{ A_n f \}_{n=1}^{\infty}$ is a Cauchy sequence converging to $L^{\dagger} f$ in $\mcal{H}$ because
    $\vert \lambda_m \vert \to \infty$ as $m\to\infty$.
    Passing to the limit, we obtain
    \[
        \| L^{\dagger} f - A_m f \|_{\mcal{H}}^2
        \leq | \lambda_{m+1} |^{-2} \| f \|_{\mcal{H}}^2,
    \]
    meaning that $\| L^{\dagger} - A_m \|_{\mcal{H} \to \mcal{H}} \leq | \lambda_{m+1} |^{-1} \to 0$ as $m\to\infty$.
    Since $L^{\dagger}$ is the limit with respect to the operator norm of a sequence of finite rank operators, it follows from \citet[Corollary~6.2]{Brezis2010functional} that $L^{\dagger}$ is compact.
    Moreover, since each $A_n$ is self-adjoint, it follows that $L^{\dagger}$ is self-adjoint by continuity as
    \[
        \langle L^{\dagger} f,\ g \rangle_{\mcal{H}}
        = \lim_{n\to\infty} \langle A_n f,\ g \rangle_{\mcal{H}}
        = \lim_{n\to\infty} \langle f,\ A_n g \rangle_{\mcal{H}}
        = \langle f,\ L^{\dagger} g \rangle_{\mcal{H}}.
    \]

    Next, we show that $A_n f$ converges in the graph norm for $L$, i.e., in the norm defined by $\| g \|_{L}^2 := \| g \|_{\mcal{H}}^2 + \| L g \|_{\mcal{H}}^2$ for $g \in D(L)$. When $m \leq n$ is sufficiently large such that $\lambda_{m+1}\neq 0$, we have
    \[
        \| A_n f - A_m f \|_{L}^2
        = \sum_{k=m+1}^n \left( | \lambda_k |^{-2} + 1 \right) |\langle \varphi_k,\ f \rangle_{\mcal{H}} |^2
        \leq \left( | \lambda_{m+1} |^{-2} + 1 \right) \sum_{k=m+1}^{\infty} | \langle \varphi_k,\ f \rangle_{\mcal{H}} |^2,
    \]
    which converges to $0$ as $m\to\infty$, i.e., $A_n f$ is a Cauchy sequence in $D(L)$ with the graph norm. Since the graph of $L$ is closed, it follows that the limit $L^{\dagger} f$ is in $D(L)$ and $L A_n f \to L L^{\dagger} f$ in $\mcal{H}$ as $n\to\infty$. In other words, we have
    \[
        L L^{\dagger} f
        = \sum_{\substack{k=1\\ \lambda_k \neq 0}}^\infty \lambda_k^{-1} L \varphi_k \langle \varphi_k,\ f \rangle_{\mcal{H}}
        = \sum_{\substack{k=1\\ \lambda_k \neq 0}}^\infty \varphi_k \langle \varphi_k,\ f \rangle_{\mcal{H}},
    \]
    where the sum converges in $\H$. From this expression, one can see that if there exists a nonzero $\lambda_k$, then $\| L (L^{\dagger})^* \|_{\mcal{H}\to\mcal{H}} = \| L L^{\dagger} \|_{\mcal{H}\to\mcal{H}} = 1$.
    By Parseval's identity, we also have
    \[
        \| L^{\dagger} f - L^{\dagger} P_n f \|_{\mcal{H}}^2
        = \sum_{\substack{k =n+1\\ \lambda_k \neq 0}}^\infty | \lambda_k |^{-2} |\langle \varphi_k,\ f \rangle_{\mcal{H}} |^2
        \leq | \lambda_{n+1}|^{-2} \| f \|_{\mcal{H}}^2,
    \]
    and equality is achieved by $f = \varphi_{n+1}$.
    Then, $\| L^{\dagger} - L^{\dagger} P_n \|_{\mcal{H}\to\mcal{H}} = | \lambda_{n+1} |^{-1}$ when $\lambda_{n+1} \neq 0$.
    Finally, \cref{eqn:operator_projection_bound} holds with equality for the operator defined by \cref{eqn:operator_achieving_equality_in_projection_bound}.
\end{proof}

\begin{remark}[Role of $\|LA^*\|$]
The constant $\|LA^*\|$ quantifies how well $L$ captures information about the range of $A^*$. This is related to the definition of near-symmetry in the finite-dimensional case in \cref{def_near_sym}.
\end{remark}

\cref{thm:operator_projection} shows that the approximation error between $A$ and $A P_n$ decays at a rate determined by the eigenvalues of $L$. A practical way to compute the constant $\|L A^*\|_{\mcal{H}'\to\mcal{H}}$ in \cref{thm:operator_projection} is given by the following lemma.

\begin{lemma} \label{lem:LAstar_norm}
    The bounded operator $(L A^*)^*$ extends the operator $A L : D(L) \subset \H \to \H'$ that is densely-defined. Let $\{\tilde{P}_n\}_{n=1}^{\infty}$ be a sequence of orthogonal projections in $\H$ with $\Range(\tilde{P}_n) \subset D(L)$ and $\tilde{P}_n$ converging strongly to the identity.
    Then, we have
    \begin{equation} \label{eqn:norm_of_LAstar}
        \| L A^* \|_{\mcal{H}'\to\mcal{H}}
        = \sup_{\substack{f\in D(L)\\ \| f\|_{\H} \leq 1}} \| A L f \|_{\mcal{H}'}
        = \lim_{n\to\infty} \| A L \tilde{P}_n \|_{\mcal{H}\to\mcal{H}'}.
    \end{equation}
\end{lemma}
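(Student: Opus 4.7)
My plan is to prove the lemma in three steps: first identifying $(LA^*)^*$ as an extension of $AL$, then deriving the first equality in \eqref{eqn:norm_of_LAstar} by density, and finally passing to the limit via strong convergence of $\tilde{P}_n$. The required ingredients are self-adjointness of $L$, boundedness of $LA^*$ (established in \cref{thm:operator_projection}), and the density of $D(L)$ in $\mcal{H}$, which holds because the eigenbasis $\{\varphi_k\}_{k=1}^\infty$ from \cref{lem:function_projection} lies in $D(L)$ and spans $\mcal{H}$.

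For the extension claim, I would fix $g \in \mcal{H}'$ and $f \in D(L)$ and apply self-adjointness of $L$ to the pair $(A^* g,\ f)$, which is admissible because $A^* g \in D(L)$ by the hypothesis $\Range(A^*) \subset D(L)$ inherited from \cref{thm:operator_projection}:
\[
\langle L A^* g,\ f \rangle_{\mcal{H}} = \langle A^* g,\ L f \rangle_{\mcal{H}} = \langle g,\ A L f \rangle_{\mcal{H}'}.
\]
Since $L A^*: \mcal{H}' \to \mcal{H}$ is bounded, its adjoint $(L A^*)^*: \mcal{H} \to \mcal{H}'$ is also bounded, and the identity above forces $(L A^*)^* f = A L f$ for every $f \in D(L)$. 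Thus $(L A^*)^*$ is a bounded extension of the densely-defined operator $A L|_{D(L)}$.

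For the first equality in \eqref{eqn:norm_of_LAstar}, I would invoke $\|L A^*\|_{\mcal{H}' \to \mcal{H}} = \|(L A^*)^*\|_{\mcal{H} \to \mcal{H}'}$ and restrict the defining operator-norm supremum to the dense subspace $D(L)$ on which $(L A^*)^* = A L$; continuity of $(L A^*)^*$ ensures that no loss occurs. For the limiting equality, I would sandwich $\|A L \tilde{P}_n\|_{\mcal{H} \to \mcal{H}'}$ between matching bounds. The upper bound $\|A L \tilde{P}_n\|_{\mcal{H} \to \mcal{H}'} \leq \|L A^*\|_{\mcal{H}' \to \mcal{H}}$ is immediate by writing $A L \tilde{P}_n f = (L A^*)^* \tilde{P}_n f$, valid since $\tilde{P}_n f \in D(L)$, and then using the contraction $\|\tilde{P}_n\|_{\mcal{H} \to \mcal{H}} \leq 1$. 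For the reverse inequality, I would fix $f \in \mcal{H}$ with $\|f\|_{\mcal{H}} \leq 1$ and exploit the strong convergence $\tilde{P}_n f \to f$, which combined with continuity of $(L A^*)^*$ gives $A L \tilde{P}_n f \to (L A^*)^* f$ in $\mcal{H}'$; hence $\liminf_n \|A L \tilde{P}_n\|_{\mcal{H} \to \mcal{H}'} \geq \|(L A^*)^* f\|_{\mcal{H}'}$, and taking the supremum over such $f$ supplies the matching lower bound.

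I do not anticipate a serious obstacle: the argument is essentially a formal manipulation of adjoints combined with density and strong convergence, and the quantitative piece is furnished directly by $\|\tilde{P}_n\| \leq 1$. The one subtle point lies in the extension step, where self-adjointness of $L$ can only be invoked on pairs in $D(L)$; this is precisely why the hypothesis $\Range(A^*) \subset D(L)$ carried over from \cref{thm:operator_projection} is indispensable, and attempting the corresponding manipulation for arbitrary $f \in \mcal{H}$ would fail.
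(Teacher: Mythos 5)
Your proof is correct and takes essentially the same approach as the paper: identify $(LA^*)^*$ as an extension of $AL$ via the adjoint/self-adjointness manipulation, then use density of $D(L)$ for the first equality, and strong convergence of $\tilde{P}_n$ together with $\|\tilde{P}_n\|\leq 1$ for the limiting equality. The only cosmetic difference is that the paper passes to the adjoint $\|AL\tilde{P}_n\| = \|\tilde{P}_n LA^*\|$ and chooses an $\epsilon$-near-maximizing $v\in\H'$, whereas you stay with $(LA^*)^*\tilde{P}_n$ directly and use a $\liminf$/supremum argument over $f\in\H$—two dual formulations of the same idea.
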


We emphasize that one can choose $\tilde{P}_n = P_n$ to estimate the constant $\|LA^*\|_{\mcal{H}'\to\mcal{H}}$ numerically in \cref{eqn:norm_of_LAstar} as in \cref{alg_constant_estimate}. To understand \cref{lem:function_projection} and \cref{thm:operator_projection} more concretely, we first consider the case where $L$ is constructed from the Laplace--Beltrami operator on a compact Riemannian manifold without boundary in~\cref{sec:manifold_without_boundary_case}. Then, we extend this analysis to domains with boundaries in \cref{sec:domain_with_boundary_case}.

\begin{algorithm}[htbp]
\caption{Estimation of $\|LA^*\|$}\label{alg_constant_estimate}
\begin{algorithmic}[1]
\Require Bounded linear operator $A:\mathcal{H}\to \mathcal{H}'$, self-adjoint operator $L:D(L)\subset \mathcal{H}\to\mathcal{H}$ such that $\Range(A^*) \subset D(L)$, integer $n\geq 1$.
\State Compute the first $n$ eigenfunctions $\varphi_1,\ldots,\varphi_n$ of $L$ and eigenvalues $\lambda_1,\ldots,\lambda_n$, with $|\lambda_1|\leq |\lambda_2|\leq \cdots |\lambda_n|$.
\State Sample the operator $A$ $n$ times at the eigenfunctions of $L$ to obtain \[u_1=A(\varphi_1),\quad \ldots,\quad u_n=A(\varphi_n).\]
\State Define the matrix $M_n$ as
\[M_n\coloneqq\begin{bmatrix}\lambda_1 u_1&\ldots&\lambda_n u_n\end{bmatrix}.\]
\Ensure Approximation $\|M_n\|_2$ of the constant $\|LA^*\|$.
\end{algorithmic}
\end{algorithm}

\subsection{Compact Manifolds without Boundaries} \label{sec:manifold_without_boundary_case}

Let $\M$ be a smooth, compact, $d$-dimensional manifold without boundary, and let $\langle \cdot, \cdot \rangle_g$ be a Riemannian metric on $\M$. An example of such as manifold is the periodic box, also known as the torus $\mathbb{T}^d$ formed by identifying opposite faces of the cube $[0,1]^d$ in $d$-dimensional Euclidean space $\R^d$. In this section, we consider the case where $A: \H \to \H$ is an operator on $\H = L^2(\M)$, while the prior operator $L$ in \cref{thm:operator_projection} is constructed from the Laplace--Beltrami operator on the manifold. The classical LBO $\Delta_g:D(\Delta_g)\subset L^2(\M) \to L^2(\M)$ is defined on a domain of smooth functions $D(\Delta_g) = C^{\infty}(\M)$ using the standard formula
\begin{equation} \label{eqn:classical_LBO}
    \Delta_g f
    = - \vdiv(\grad f)
    = - \frac{1}{\sqrt{\det(g_{\cdot,\cdot})}} \frac{\partial}{\partial x^j}\left( \sqrt{\det(g_{\cdot,\cdot})} g^{j,k} \frac{\partial f}{ \partial x^k} \right),
\end{equation}
where the overall sign is a matter of convention.
Here, $g_{i,j} = \langle \partial/\partial x^i, \partial/\partial x^j \rangle_g$ is the metric tensor in a coordinate chart $(x^1, \ldots, x^d): \mcal{U} \to \R^d$ on an open subset $\mcal{U}\subset \mcal{M}$, and $g^{i,j}$ is the inverse metric tensor defined by the relation $g^{i,j} g_{j,k} = \delta^{i}_{k}$, with $\delta^{i}_{k}$ being the Kronecker delta. On the torus $\mathbb{T}^d$ with Euclidean coordinates $x^i$, the LBO is the standard Laplacian operator with flipped sign $\Delta_g = - \frac{\partial}{\partial x^1} \frac{\partial}{\partial x^1} - \cdots - \frac{\partial}{\partial x^d} \frac{\partial}{\partial x^d}$ and periodic boundary conditions. Using our choice of sign, the LBO is nonnegative and symmetric due to Green's identity (see~\cref{app:Fourier_proj_proofs}):
\begin{equation}    \label{eqn:Green_without_boundary}
    \int_{\M} f_1 \Delta_g f_2 \d \mu_g
    = \int_{\M} \langle \grad f_1, \grad f_2 \rangle_g \d \mu_g,
\end{equation}
which holds for every $f_1, f_2 \in C^{\infty}(\M)$. Here, $\d\mu_g$ is the Riemannian density defined in~\citep[Prop.~16.45]{Lee2013introduction}. Moreover, the closure of the classical LBO $\Delta = \overline{\Delta_g}$ is self-adjoint (see~\cref{lem:boundaryless_LBO_domain} in \cref{app:Fourier_proj_proofs}) and the domains of its powers coincide with the Sobolev spaces
\[
    D(\Delta^{k/2}) = H^k(\M) = W^{k,2}(\M),
\]
consisting of square-integrable functions with $k\geq 1$ weak derivatives in $L^2(\M)$. Then, choosing the operator $L = \Delta^{k/2}$ in \cref{lem:function_projection,thm:operator_projection} yields the following result, whose proof is available in \cref{app:Fourier_proj_proofs}.

\begin{theorem} \label{thm:manifold_without_boundary_case}
    Let $\M$ be a smooth, compact Riemannian manifold without boundary.
    Then $L^2(\M)$ admits an orthonormal basis consisting of eigenfunctions $\{ \varphi_j \}_{j=1}^{\infty}$ of $\Delta = \overline{\Delta_g}$ with eigenvalues $0 \leq \lambda_1 \leq \lambda_2 \leq \cdots$, and $\lambda_n\to\infty$.
    Let $P_n:L^2(\M) \to L^2(\M)$ denote the orthogonal projection onto $\vspan\{ \varphi_j \}_{j=1}^n$.
    If $\lambda_{n+1} \neq 0$ then for every integer $k\geq 1$ we have
    \begin{equation} \label{eqn:function_approx_boundaryless_case}
        \| f - P_n f \|_{L^2(\mcal{M})}
        \leq \frac{1}{\lambda_{n+1}^{k/2}} \| \Delta^{k/2} f \|_{L^2(\mcal{M})},
        \quad f \in H^k(\M),
    \end{equation}
    and equality is achieved by $f = \varphi_{n+1}$.
    Every bounded operator $A:L^2(\M) \to \H'$ with $\Range(A^*) \subset H^k(\M)$ satisfies $\| \Delta^{k/2} A^* \|_{\mcal{H}' \to L^2(\mcal{M})} < \infty$ and
    \[
        \| A - A P_n \|_{L^2(\mcal{M}) \to \mcal{H}'}
        \leq \frac{1}{\lambda_{n+1}^{k/2}} \| \Delta^{k/2} A^* \|_{\mcal{H}' \to L^2(\mcal{M})},
    \]
    where equality is achieved by the operator $A = (\Delta^{k/2})^{\dagger}$.
\end{theorem}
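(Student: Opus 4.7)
The plan is to deduce the theorem by specializing the abstract results \cref{lem:function_projection} and \cref{thm:operator_projection} to the operator $L = \Delta^{k/2}$, where $\Delta = \overline{\Delta_g}$ is the closure of the classical Laplace--Beltrami operator on $\mcal{M}$. The existence of the orthonormal eigenbasis with nondecreasing nonnegative eigenvalues tending to infinity will come from applying \cref{lem:function_projection} directly to $\Delta$, so the first task is to verify its hypotheses in this geometric setting.

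First, I would note (citing \cref{lem:boundaryless_LBO_domain} from the appendix) that $\Delta$ is self-adjoint with domain $D(\Delta) = H^2(\mcal{M})$, and that its nonnegativity follows from Green's identity \eqref{eqn:Green_without_boundary} by passing to the closure. The compact embedding $D(\Delta) \hookrightarrow L^2(\mcal{M})$, required by \cref{lem:function_projection}, is the classical Rellich--Kondrachov theorem on compact manifolds: the inclusion $H^2(\mcal{M}) \hookrightarrow L^2(\mcal{M})$ is compact, and the graph norm on $D(\Delta)$ is equivalent to the $H^2$-norm by elliptic regularity. Applying \cref{lem:function_projection} then gives the orthonormal eigenbasis $\{\varphi_j\}$ with real eigenvalues $|\lambda_j| \to \infty$; nonnegativity of the $\lambda_j$ follows because $\langle \varphi_j, \Delta \varphi_j\rangle = \lambda_j \|\varphi_j\|^2$ and $\langle f, \Delta f\rangle \geq 0$ for all $f \in D(\Delta)$ by \eqref{eqn:Green_without_boundary}.

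For the sharper $k$-th order estimate, the main step is to identify $\Delta^{k/2}$ as the self-adjoint operator (defined via the Borel functional calculus applied to $\Delta$) whose domain coincides with $H^k(\mcal{M})$, with the same eigenfunctions $\{\varphi_j\}$ and shifted eigenvalues $\{\lambda_j^{k/2}\}$. The domain identification $D(\Delta^{k/2}) = H^k(\mcal{M})$ is a standard consequence of elliptic regularity on compact manifolds. Once this is established, the operator $L = \Delta^{k/2}$ satisfies the hypotheses of \cref{lem:function_projection}: it is self-adjoint, its eigenvalues $\lambda_j^{k/2}$ are nondecreasing and tend to infinity, and $D(L) = H^k(\mcal{M})$ is compactly embedded in $L^2(\mcal{M})$ (again Rellich--Kondrachov). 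The projection $P_n$ onto $\vspan\{\varphi_1,\ldots,\varphi_n\}$ coincides with the one from \cref{lem:function_projection} applied to $L$, because they share eigenfunctions. Then \eqref{eqn:function_approx_boundaryless_case} is \eqref{eqn:function_projection_bound} with $|\lambda_{n+1}|$ replaced by $\lambda_{n+1}^{k/2}$, and equality at $f = \varphi_{n+1}$ is inherited directly.

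Finally, the operator bound follows from \cref{thm:operator_projection} applied with the same $L = \Delta^{k/2}$: the hypothesis $\Range(A^*) \subset H^k(\mcal{M}) = D(\Delta^{k/2})$ is exactly what is needed, and the theorem yields both $\|\Delta^{k/2} A^*\| < \infty$ and the desired inequality, with equality attained by $A = (\Delta^{k/2})^{\dagger}$ as constructed in \eqref{eqn:operator_achieving_equality_in_projection_bound}. The main obstacle in carrying this out rigorously is the identification $D(\Delta^{k/2}) = H^k(\mcal{M})$ with equivalence of norms; this relies on elliptic regularity for powers of $\Delta$ on a compact manifold without boundary, which I would either cite from standard references or sketch by bootstrapping the $H^2$-regularity of $\Delta$ through the functional calculus. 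Everything else is a mechanical specialization of the abstract results already proved.
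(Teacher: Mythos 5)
Your proposal is correct and follows essentially the same route as the paper: invoke \cref{lem:boundaryless_LBO_domain} for the domain identification $D(\Delta^{k/2}) = H^k(\M)$, then apply \cref{lem:function_projection} and \cref{thm:operator_projection} with $L = \Delta^{k/2}$. You spell out the hypothesis checks (compact embedding via Rellich--Kondrachov, nonnegativity via Green's identity, the fact that $\Delta^{k/2}$ shares eigenfunctions with $\Delta$) in more detail than the paper's terse two-sentence proof, but the underlying argument is identical.
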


The $k=1$ case in \cref{eqn:function_approx_boundaryless_case} was proven by \cite{Aflalo2013spectral, Aflalo2015optimality}. Our generalization is useful when considering solution operators $A: L^2(\M) \to L^2(\M)$ associated with uniformly elliptic PDEs, which benefit from higher degrees of regularity. This allows us to apply \cref{thm:manifold_without_boundary_case} for $k > 1$ to achieve faster convergence rates.

\subsection{Application to Differential Operators}
A concrete application of \cref{thm:manifold_without_boundary_case} arises when approximating the solution operator $A$ associated with differential equations. Here, we consider a $k$th-order smooth scalar differential operator $\mcal{L}$ on $\mcal{M}$.
The formal adjoint of $\mcal{L}$ is the unique differential operator, denoted $\mcal{L}^{\top}$, which satisfies the integration-by-parts formula
\[
    \int_{\mcal{M}} v \mcal{L}(u) \d\mu_g = \int_{\mcal{M}} \mcal{L}^\top(v) u \d\mu_g,
\]
for all test functions $u,v \in \mcal{D}(\mcal{M})$, that is, infinitely differentiable functions with compact support in the interior of $\mcal{M}$. Here, since $\mcal{M}$ is assumed to be compact and boundaryless, we have $\mcal{D}(\mcal{M}) = C^{\infty}(\mcal{M})$.
The formal adjoint can be constructed by observing that $\mcal{L}$ can always be written as
\[
    \mcal{L}(u) = \sum_{\| \alpha \| \leq k} a_{\alpha} X_{1}^{\alpha_1} \circ X_{2}^{\alpha_2} \circ \cdots \circ X_{\tilde{m}}^{\alpha_{\tilde{m}}} (u),
\]
where $X_j$ are smooth vector fields on $\mcal{M}$, $\alpha = (\alpha_1, \ldots, \alpha_{\tilde{m}})$ is a multi-index, and $a_{\alpha} \in C^{\infty}(\mcal{M})$ are smooth coefficient functions. The formal adjoint of a smooth vector field $X$ is the differential operator
\begin{equation} \label{eqn:adjoint_of_vector_field}
    X^\top(v) = - X(v) - \vdiv(X) v,
\end{equation}
thanks to the relation $\vdiv(u v X) = v X(u) +  \left[ X(v) + \vdiv(X) v \right] u$ and the divergence theorem \citep[Thm.~16.48]{Lee2013introduction}.
Applying \cref{eqn:adjoint_of_vector_field} recursively allows us to express the formal adjoint of the differential operator $\mcal{L}$ as
\[
    \mcal{L}^\top(v) = \sum_{\| \alpha \| \leq k} (X_{\tilde{m}}^{\alpha_{\tilde{m}}})^\top \circ \cdots \circ (X_2^{\alpha_2})^\top \circ (X_1^{\alpha_1})^\top (a_{\alpha} v).
\]
This expression satisfies the integration-by-parts formula.

By examining the coefficient functions in local charts on $\mcal{M}$, one can see that $\mcal{L}$ is elliptic if and only if $\mcal{L}^\top$ is elliptic. Supposing that $\mcal{L}$ is elliptic and $u$ is a distribution solving the partial differential equation
\begin{equation} \label{eqn:PDE_on_boundaryless_mfd}
    \mcal{L}(u) = f, \quad f \in H^{s}(\mcal{M}),
\end{equation}
where $s\geq 0$. Then, the interior elliptic regularity theorem \citep[Ch.~5, Thm.~11.1]{Taylor2011PDEI} says that $u \in H^{k + s}(\mcal{M})$ and satisfies the following regularity estimate
\[
    \| u \|_{H^{k+s}(\mcal{M})} \leq C(\mcal{M}, \mcal{L}, s, \sigma) \left( \| f \|_{H^s(\mcal{M})} + \| u \|_{H^{\sigma}(\mcal{M})} \right), \quad \forall \sigma < k + s.
\]
Let us assume that for every $f \in L^2(\mcal{M})$, \cref{eqn:PDE_on_boundaryless_mfd} has a solution $u = A f$ given by the operator $A: L^2(\mcal{M}) \to L^2(\mcal{M})$.
Elliptic regularity ensures that $A$ is bounded, and in fact, that $\Range(A) \subset H^{k}(\mcal{M})$.
Integration by parts shows that $v = A^* g$ solves the adjoint PDE
\[
    \mcal{L}^\top(v) = g, \quad g \in L^2(\mcal{M})
\]
in the sense of distributions, and is thus also a strong solution by the interior elliptic regularity theorem.
Thus, we have $\Range(A^*) \subset H^{k}(\mcal{M})$ and \cref{thm:manifold_without_boundary_case} applies.

The asymptotic behavior of the approximation bound in \cref{thm:manifold_without_boundary_case} is determined by the growth of the eigenvalues $\lambda_n$ of the LBO.
Weyl's law \citep{Weyl1911asymptotische} characterizes the asymptotic distribution of the eigenvalues as $\lambda_n \sim c n^{2/d}$ for a constant $c>0$ (see also~\citealt{Canzani2013analysis, Minakshisundaram1949some}). The asymptotic distribution of eigenvalues yields an approximation error that is asymptotically
\begin{equation} \label{decay_eig_law}
    \| A - A P_n \| = \mcal{O}\big( n^{-k/d} \big)
    \quad \mbox{as} \quad n \to \infty,
\end{equation}
where $k\geq 1$ is the order of the differential operator and $d$ is the spatial dimension. Hence, the rate in \cref{decay_eig_law} degrades in high dimensions but improves as the regularity increases. However, for problems involving solution operators of PDEs motivated by physics applications, we often have $d \leq 3$.

\begin{remark}
    We can also construct the prior operator $L$ using powers of other uniformly elliptic differential operators besides the LBO.
    Similar asymptotic laws also hold for the eigenvalues of these operators \citep{beals1970asymptotic,clark1967asymptotic}.
    By choosing $L$ based on additional prior information about $A$, it may be possible to reduce the magnitude of the constant $\| L A^* \|_{\mcal{H}' \to L^2(\mcal{M})}$ in the approximation bound.
\end{remark}

\subsection{Domains with Boundaries} \label{sec:domain_with_boundary_case}

Many practical applications involve solving PDEs on spatial domains with boundaries and specified boundary conditions (BCs) rather than the compact manifolds without boundaries as described in \cref{sec:manifold_without_boundary_case}. With a spatial domain consisting of a smooth Riemannian manifold $(\Omega, \langle \cdot, \cdot\rangle_g)$ with smooth boundary $\partial \Omega$, we consider a $k$th-order elliptic partial differential equation of the form:
\begin{equation} \label{eqn:PDE_on_domain_with_boundary}
    \mbox{$\mcal{L}(u) = f$ in $\Omega$}, \quad \mbox{and} \quad \mbox{$\mcal{B}_i(u) = 0$, $1\leq i\leq l$, on $\partial \Omega$}.
\end{equation}
We assume that this equation has a solution for every $f \in H^s(\Omega)$ and that these solutions satisfy the global elliptic regularity estimate
\begin{equation}\label{eqn:global_regularity}
    \| u \|_{H^{k+s}(\Omega)} \leq C(\Omega, \mcal{L}, s, \sigma) \left( \| f \|_{H^s(\Omega)} + \| u \|_{H^{\sigma}(\Omega)} \right), \quad \forall \sigma < k + s.
\end{equation}
Unlike in the boundaryless case, the conditions for such an estimate to hold depend delicately on the operator $\mcal{L}$, the boundary $\partial \Omega$, and boundary conditions specified by the differential operators $\mcal{B}_j$~\citep[Ch.~4, Sec.~11]{Taylor2011PDEI}.

An adjoint problem associated with \cref{eqn:PDE_on_domain_with_boundary} can be defined as
\begin{equation} \label{eqn:adjoint_PDE_on_domain_with_boundary}
    \mbox{$\mcal{L}^\top(v) = g$ in $\Omega$}, \quad \mbox{and} \quad \mbox{$\mcal{B}_i'(u) = 0$, $1\leq i\leq l'$, on $\partial \Omega$},
\end{equation}
using the formal adjoint operator $\mcal{L}^\top$ and boundary conditions for which
\[
    \int_{\Omega} v \mcal{L}(u) \d \mu_g = \int_{\Omega} \mcal{L}^\top(v) u \d \mu_g
\]
holds for every $u \in D$ and $v \in D'$, where $D$ and $D'$ are defined as
\begin{equation}\label{eqn:domains_for_PDE_and_adjoint_with_boundary}
    \begin{aligned}
        D  & = \{ u \in H^k(\Omega) \colon \mcal{B}_i(u) = 0, \ 1\leq i\leq l, \ \mbox{on} \ \partial \Omega \}, \ \mbox{and} \\
        D' & = \{ v \in H^k(\Omega) \colon \mcal{B}_i'(v) = 0, \ 1\leq i\leq l', \ \mbox{on} \ \partial \Omega \}.
    \end{aligned}
\end{equation}
Here, the boundary conditions are understood in the sense of trace.
We assume the adjoint problem also has a solution satisfying a corresponding global regularity estimate in the form of \cref{eqn:global_regularity} for each $g \in H^s(\Omega)$.

\begin{remark}
    Conditions ensuring the existence of adjoint boundary operators and regular solutions of the adjoint boundary value problem are discussed in \citep[Thm.~12.7]{Taylor2011PDEI} and \citep[Thms.~8.37~and~8.41]{Renardy2004introduction}.
    As an example, suppose that the boundary value problem given in \cref{eqn:PDE_on_domain_with_boundary} is defined in a planar region $\Omega \subset \R^d$, has even order $k = 2m$, satisfies the ``complementing conditions'' described by \citet[Def.~8.28]{Renardy2004introduction}, and has a unique regular solution for each $f \in L^2(\Omega)$.
    Then, it follows immediately from \citep[Thms.~8.37~and~8.41]{Renardy2004introduction} that adjoint boundary operators exist and the adjoint problem in \cref{eqn:adjoint_PDE_on_domain_with_boundary} has regular solutions.
\end{remark}

If $A: L^2(\Omega) \to L^2(\Omega)$ is a solution operator associated with \cref{eqn:PDE_on_domain_with_boundary}, then its adjoint, $A^*$, is the solution operator for the adjoint problem defined in \cref{eqn:adjoint_PDE_on_domain_with_boundary}.
Additionally, the global regularity estimate~\eqref{eqn:domains_for_PDE_and_adjoint_with_boundary} ensures that $\Range(A) \subset D \subset H^k(\Omega)$, and, more importantly, that $\Range(A^*) \subset D' \subset H^k(\Omega)$.
To verify that $A^*$ is the solution operator associated with \cref{eqn:adjoint_PDE_on_domain_with_boundary},  suppose that $v$ is a solution to \cref{eqn:adjoint_PDE_on_domain_with_boundary} for a given $g\in L^2(\Omega)$, and thus $v \in D'$ due to global regularity. If $f \in L^2(\Omega)$, we also have $A f \in D$ due to global regularity.
Therefore,
\[
    \langle f,\ A^* g \rangle_{L^2(\Omega)}
    = \langle A f,\ \mcal{L}^\top (v) \rangle_{L^2(\Omega)}
    = \langle \mcal{L} A f, \ v \rangle_{L^2(\Omega)}
    = \langle f, \ v \rangle_{L^2(\Omega)},
\]
which implies that $v = A^* g$, as desired.

We provide two methods for approximating the solution operator $A$ that do not require solving the adjoint problem in \cref{sec_matching_adjoint_boundary_conditions_method,sec:extension_method}:
\begin{enumerate}
    \item The matching adjoint boundary conditions method. It can be applied for even order ($k=2m$) elliptic operators and whenever the adjoint boundary conditions $\mcal{B}_j'$ can be used to define a self-adjoint extension $L: D(L) \subset L^2(\Omega) \to L^2(\Omega)$ of $\Delta_g^m$. Here, $\Delta_g^m$ denotes the $m$th power of Laplace--Beltrami operator on $\Omega$. The key idea is to apply \cref{thm:operator_projection} directly under sufficient assumptions, ensuring that $\Range(A^*) \subset D' \subset D(L)$.
    \item The extension method. It is more general and can always be applied, but it might lead to a worse constant factor in the resulting approximation bound. The key idea is to embed the domain $\Omega$ in a compact manifold $\mcal{M}$ without boundary, on which a self-adjoint extension of the LBO can be defined (see the discussion in \cref{sec:manifold_without_boundary_case}) and apply \cref{thm:manifold_without_boundary_case}.
\end{enumerate}

\subsubsection{Matching adjoint boundary conditions method} \label{sec_matching_adjoint_boundary_conditions_method}

In this section, we assume that the differential operator $\mcal{L}$ has even order $k = 2m$, and aim to construct a self-adjoint extension $L: D(L)\subset L^2(\Omega) \to L^2(\Omega)$ of the $m$th power of the Laplace--Beltrami operator, $\Delta_g^m$, satisfying $D' \subset D(L)$.
If this can be accomplished, then our main approximation result in \cref{thm:operator_projection} can be applied because $\Range(A^*) \subset D' \subset D(L)$.
To do this, we consider a set of differential operators $\{\mcal{B}''_i\}_{i=1}^{l''}$ acting on the boundary of $\Omega$, so that the space of functions $D'$, defined by \cref{eqn:domains_for_PDE_and_adjoint_with_boundary}, is included in the following function space:
\[
    D'' = \{ u \in H^{2m}(\Omega)\colon \mcal{B}_i''(u) = 0, \ 1\leq i\leq l'', \ \mbox{on} \ \partial \Omega \}.
\]
We assume that the $m$th power of the LBO, understood (with a slight abuse of notation) as a differential operator acting on distributions, satisfies the symmetry condition:
\begin{equation}\label{eqn:LBO_power_symmetry_condition}
    \langle \Delta_g^m u, \ v \rangle_{L^2(\Omega)} = \langle u, \ \Delta_g^m v \rangle_{L^2(\Omega)}, \quad \forall u,v \in D''.
\end{equation}
Furthermore, we assume that there exists a constant $c \in \R$ such that the boundary value problem
\begin{equation}\label{eqn:BVP_for_LBO_power}
    \mbox{$c u + \Delta_g^m u = f$ in $\Omega$},
    \quad \mbox{and} \quad
    \mbox{$\mcal{B}''_i(u) = 0$, $1\leq i\leq l''$, on $\partial \Omega$}
\end{equation}
has a solution $u \in H^{2m}(\Omega)$ satisfying a global regularity estimate in the form of \cref{eqn:global_regularity} for each $f \in L^2(\Omega)$.
Typical examples where symmetry and regularity hold in the $m=1$ case include Dirichlet and Neumann boundary conditions corresponding to operators $\mcal{B}''_1(u) = u$ and $\mcal{B}''_1(u) = \vec{n} \cdot \grad u$, respectively. Here, $\vec{n}$ denotes the unit outward normal vector field along $\partial \Omega$. We refer the reader to \citep{Ibort2015self} for more details on the choice of boundary conditions leading to self-adjoint extensions of the Laplace--Beltrami operator. Boundary conditions, self-adjointness, and eigenvalue estimates for biharmonic ($m=2$) and poly-harmonic ($m \geq 2$) operators are discussed by \cite{colbois2022neumann, ilias2010universal}. The following lemma shows that our assumptions yield a self-adjoint power of the LBO with domain $D(L) = D''$.

\begin{lemma}\label{lem:self_adjoint_BCs_for_LBO}
    Let $L = \left.\Delta_g^m\right|_{D''} : D(L) = D'' \subset L^2(\mcal{M}) \to L^2(\mcal{M})$ denote the restriction of $\Delta_g^m$ to $D''$, where $\Delta_g^m$ is understood as a differential operator acting on distributions. Suppose that \cref{eqn:LBO_power_symmetry_condition} holds and \cref{eqn:BVP_for_LBO_power} has a solution $u \in H^{2m}(\Omega)$ satisfying an estimate in the form of \cref{eqn:global_regularity} for each $f \in L^2(\Omega)$. Then $L$ is self-adjoint and $D(L) = D''$, endowed with the graph norm, is compactly embedded in $L^2(\Omega)$.
\end{lemma}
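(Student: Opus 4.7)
The plan is to exploit the solvability of the shifted boundary value problem~\eqref{eqn:BVP_for_LBO_power} twice: first to promote symmetry to self-adjointness via a range-surjectivity argument, and then to bound the graph norm by the $H^{2m}(\Omega)$ norm so that Rellich--Kondrachov supplies the compact embedding.

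First I would record that $L$ is symmetric on $D(L) = D''$, which is immediate from~\eqref{eqn:LBO_power_symmetry_condition}; this gives $D(L)\subset D(L^*)$ with $L^*u = Lu$ for $u\in D(L)$. Next I would establish the reverse inclusion $D(L^*)\subset D(L)$. Given $v\in D(L^*)$, set $w := L^*v$; by the solvability hypothesis for~\eqref{eqn:BVP_for_LBO_power}, there exists $\tilde v\in D''$ satisfying $(cI+L)\tilde v = cv + w$. For every $u\in D(L)$, symmetry and the definition of the adjoint yield
\[
    \langle (cI+L)u,\ \tilde v\rangle_{L^2(\Omega)} = \langle u,\ (cI+L)\tilde v\rangle_{L^2(\Omega)} = \langle u,\ cv+w\rangle_{L^2(\Omega)} = \langle (cI+L)u,\ v\rangle_{L^2(\Omega)}.
\]
Thus $v-\tilde v$ is orthogonal to $\Range(cI+L)$, and by surjectivity this range equals $L^2(\Omega)$, forcing $v = \tilde v\in D'' = D(L)$. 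Combined with symmetry, this gives $L = L^*$.

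For the compact embedding, I would apply the global regularity estimate~\eqref{eqn:global_regularity} (with $k=2m$, $s=0$, and $\sigma=0<2m$) to an arbitrary $u\in D''$, regarded as a solution of~\eqref{eqn:BVP_for_LBO_power} with right-hand side $f := (cI+L)u\in L^2(\Omega)$. This yields
\[
    \|u\|_{H^{2m}(\Omega)}\leq C\big(\|f\|_{L^2(\Omega)} + \|u\|_{L^2(\Omega)}\big)\leq C'\big(\|u\|_{L^2(\Omega)} + \|Lu\|_{L^2(\Omega)}\big) = C'\|u\|_{L},
\]
so the graph norm on $D''$ dominates the $H^{2m}(\Omega)$ norm. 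Since $m\geq 1$, the Rellich--Kondrachov theorem provides a compact embedding $H^{2m}(\Omega)\hookrightarrow L^2(\Omega)$ on the smooth bounded manifold $\Omega$, and composing with the bounded inclusion $(D(L),\|\cdot\|_L)\hookrightarrow H^{2m}(\Omega)$ gives the desired compact embedding.

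The main subtle point is that the self-adjointness step uses a \emph{real} spectral shift rather than the imaginary shifts $\pm i$ used in the textbook essential-self-adjointness criterion; the orthogonality computation above circumvents this by characterizing $D(L^*)$ directly through the surjectivity of $cI+L$, avoiding any need to solve BVPs with complex data. The other ingredients are routine: the admissibility $\sigma=0<2m$ in~\eqref{eqn:global_regularity} holds since $m\geq 1$, and Rellich--Kondrachov applies because $\Omega$ is a smooth bounded manifold with smooth boundary as specified in the setup of \cref{sec:domain_with_boundary_case}.
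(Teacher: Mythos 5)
Your proof is correct and essentially identical to the paper's: both establish symmetry from \cref{eqn:LBO_power_symmetry_condition}, then use the surjectivity of $cI+L$ from the solvability of \cref{eqn:BVP_for_LBO_power} to show $D(L^*)\subset D''$ via an orthogonality argument, and finally combine the global regularity estimate with Rellich--Kondrachov for the compact embedding. The only cosmetic difference is that you phrase the orthogonality step as ``$v-\tilde v\perp\Range(cI+L)=L^2(\Omega)$'' with $u$ ranging over $D(L)$, while the paper fixes an arbitrary $f\in L^2(\Omega)$ and unwinds $\langle f,v\rangle=\langle f,\tilde v\rangle$; these are the same argument.
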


Then, the eigenfunctions $\{\varphi_j\}_{j\geq 1}$ and eigenvalues $\{\lambda_j\}_{j\geq 1}$ of $L$ are obtained by solving the following eigenvalue problem:
\[
    \mbox{$\Delta_g^m \varphi_j = \lambda_j \varphi_j$ in $\Omega$},
    \quad \mbox{and} \quad
    \mbox{$\mcal{B}''_i(\varphi_j) = 0$, $1\leq i\leq  l''$, on $\partial \Omega$}.
\]
After ordering the eigenvalues by increasing magnitude, that is $| \lambda_1 | \leq | \lambda_2 | \leq \cdots$, we can form the orthogonal projection $P_n$ onto $\vspan\{ \varphi_j \}_{j=1}^n$ in $L^2(\Omega)$.
Finally, the combination of \cref{thm:operator_projection} and the fact that $\Range(A^*) \subset D' \subset D'' = D(L)$ allows us to derive the following error bound between $A$ and our approximation $A P_n$:
\begin{equation} \label{eqn:matching_based_operator_approx_with_boundary}
    \| A - A P_n \|_{L^2(\mcal{M})\to L^2(\mcal{M})} \leq \frac{1}{| \lambda_{n+1} |} \| \Delta_g^m A^* \|_{L^2(\mcal{M})\to L^2(\mcal{M})}.
\end{equation}
The right-hand side of \cref{eqn:matching_based_operator_approx_with_boundary} is bounded when $n$ is chosen large enough so that $\lambda_{n+1} \neq 0$. Here, we emphasize that $\lambda_{j}$ are the eigenvalues of the $m$th power of the LBO so that the approximation error converges to zero as $n$ increases.

A significant limitation of the ``matching adjoint boundary conditions'' method described in this section is that appropriate boundary conditions, i.e., the operators $\mcal{B}_i''$, might not exist or be known in advance. Fortunately, the method described in the next section can be applied even when such boundary conditions cannot be found at the cost of a larger constant in the approximation bound.

\subsubsection{Extension method} \label{sec:extension_method}

The second option presented in \cref{alg_extension} can be used even when the adjoint boundary conditions satisfied by $A^*$ do not give rise to a self-adjoint power of the LBO.
In fact, it can be used whenever $A: L^2(\Omega) \to \mcal{H}'$ and $\Range(A^*) \subset H^k(\Omega)$. We assume that the spatial domain $\Omega$ is a subset of a compact, $d$-dimensional Riemannian manifold $\M$ without boundary. For example, a bounded region $\Omega \subset \R^d$ can be embedded into the torus $\M = \mathbb{T}^d$ by constructing a cube around $\Omega$ and identifying opposite faces
of the cube. More generally, any smooth compact manifold $\Omega$ with smooth boundary $\partial \Omega$ is embedded in its ``double'' $\mcal{M} = \Omega \# \Omega$, a smooth compact manifold without boundary obtained by attaching $\Omega$ to a copy of itself along $\partial \Omega$~\citep[Ex.~9.32]{Lee2013introduction}.

\renewcommand{\algorithmicrequire}{\textbf{Input:}}
\renewcommand{\algorithmicensure}{\textbf{Output:}}
\begin{algorithm}[htbp]
\caption{Extension-based adjoint-free approximation algorithm}\label{alg_extension}
\begin{algorithmic}[1]
\Require Smooth, compact Riemannian manifold $\mcal{M}$ without boundary, subset $\Omega \subset \mcal{M}$ satisfying the modified $k$-extension property (\cref{def:modified_p_extension}), bounded linear operator $A:L^2(\Omega) \to \mathcal{H}'$ satisfying $\Range(A^*) \subset H^k(\Omega)$, and an integer $n\geq 1$.
\State Compute the first $n$ eigenfunctions $\varphi_1,\ldots,\varphi_n$ of the Laplace-Beltrami operator $\Delta_g$ on $\mcal{M}$ given by \cref{eqn:classical_LBO} and eigenvalues $\lambda_1,\ldots,\lambda_n$, with $|\lambda_1|\leq |\lambda_2|\leq \cdots |\lambda_n|$.
\State Form an orthonormal basis $\psi_1, \ldots, \psi_{m}$ for $\vspan\{ \left.\varphi_1\right|_{\Omega}, \ldots, \left.\varphi_n\right|_{\Omega} \}$ in $L^2(\Omega)$.
\State Sample the operator $A$ on the basis functions to obtain \[u_1=A(\psi_1),\quad \ldots,\quad u_{m}=A(\psi_{m}).\]
\State Define the rank $\leq n$ projected operator $A P_n:L^2(\Omega)\to \mathcal{H}'$ as
\[A P_n(f) \coloneqq \sum_{k=1}^m u_k\langle \psi_k,\ f\rangle_{L^2(\Omega)} = \sum_{k=1}^m A(\psi_k)\langle \psi_k,\ f\rangle_{L^2(\Omega)}, \quad f\in L^2(\Omega)\]
\Ensure Approximation $A P_n$ of $A$ satisfying
\[\| A - A P_n \|_{L^2(\Omega)\to\mcal{H}'} \leq \frac{C(\Omega, \mcal{M}, k)}{| \lambda_{n+1}^{k/2} |} \| A^* \|_{\mcal{H}'\to H^k(\Omega)}.\]
\end{algorithmic}
\end{algorithm}

\Cref{alg_extension} for approximating the operator $A$ proceeds by first computing the eigenfunctions $\{\varphi_j\}$ with eigenvalues $\{\lambda_j\}$ of the self-adjoint LBO $\Delta$ defined on $\M$, following the discussion of \cref{sec:manifold_without_boundary_case}. Note that by regularity, these can be computed using the classical LBO given by \cref{eqn:classical_LBO}.
We then restrict these eigenfunctions to the domain $\Omega$ and use the orthogonal projection $P_n$ onto $\vspan\{ \left.\varphi_1\right|_{\Omega}, \ldots, \left.\varphi_n\right|_{\Omega} \}$ in $L^2(\Omega)$ to form the approximant $A P_n$. When the domain $\Omega$ has the following Sobolev extension property (see~\cref{def:modified_p_extension}), this algorithm has an error bound similar to \cref{thm:manifold_without_boundary_case}, but with an additional constant factor introduced by Sobolev extension.

\begin{definition}[Modified $k$-extension property]
    \label{def:modified_p_extension}
    A subset $\Omega \subset \M$ is said to have the modified $k$-extension property if there exists a bounded linear operator $E: L^2(\Omega) \to L^2(\M)$, satisfying $ \left. (E f) \right|_{\Omega} = f$ for every $f\in L^2(\Omega)$, such that
    \[
        \| E f \|_{H^k(\M)} \leq C(\Omega, \M, k) \| f \|_{H^k(\Omega)}, \quad f \in H^k(\Omega),
    \]
    where $C(\Omega, \M, k)>0$ is a constant depending only on $\Omega$, $\M$, and $k$.
\end{definition}

The Sobolev extension method described by \citet[Section~4.4]{Taylor2011PDEI} for smooth, compact manifolds $\Omega$ with smooth boundary provides a modified $k$-extension. The only difference is that a modified $k$-extension must be defined and bounded as an operator $E: L^2(\Omega) \to L^2(\M)$, restricted to a bounded operator $H^k(\Omega) \to H^k(\M)$, rather than being defined only as an operator $H^k(\Omega) \to H^k(\M)$. This modification is unnecessary for \cref{eqn:extension_based_function_approx_with_boundary} in \cref{thm:domain_with_boundary_case}. We obtain the approximation bound stated below for domains with the modified $k$-extension property.

\begin{theorem} \label{thm:domain_with_boundary_case}
    Let $\Omega \subset \M$ be a domain satisfying the modified $k$-extension property, and $P_n: L^2(\Omega) \to L^2(\Omega)$ denote the orthogonal projection onto $\vspan\{ \left. \varphi_j\right|_{\Omega} \}_{j=1}^n$. There exists a constant $C(\Omega, \M, k)$, depending only on $\Omega$, $\M$, and $k$, so that for every $n\geq 1$ with $\lambda_{n+1}\neq 0$, we have
    \begin{equation}
        \| f - P_n f \|_{L^2(\Omega)}
        \leq \frac{C(\Omega, \M, k)}{\lambda_{n+1}^{k/2}} \| f \|_{H^k(\Omega)},
        \quad f\in H^k(\Omega).
        \label{eqn:extension_based_function_approx_with_boundary}
    \end{equation}
    Moreover, every bounded operator $A:L^2(\Omega) \to \H'$ into a Hilbert space $\H'$ with $\Range(A^*) \subset H^k(\Omega)$ satisfies $\| A^* \|_{\H' \to H^k(\Omega)} < \infty$ and
    \[
        \| A - A P_n \|_{L^2(\mcal{M})\to\mcal{H}'} \leq \frac{C(\Omega, \M, k)}{\lambda_{n+1}^{k/2}}\| A^* \|_{\H' \to H^k(\Omega)}.
    \]
\end{theorem}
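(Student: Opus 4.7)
The plan is to reduce \cref{thm:domain_with_boundary_case} to the boundaryless case \cref{thm:manifold_without_boundary_case} via the modified $k$-extension operator $E: L^2(\Omega) \to L^2(\M)$. Given $f \in H^k(\Omega)$, set $F = E f \in H^k(\M)$, which satisfies $\| F \|_{H^k(\M)} \leq C(\Omega, \M, k) \| f \|_{H^k(\Omega)}$ by \cref{def:modified_p_extension}. Let $\tilde{P}_n : L^2(\M) \to L^2(\M)$ denote the orthogonal projection onto $\vspan\{\varphi_j\}_{j=1}^n$ from \cref{thm:manifold_without_boundary_case}. Then $\tilde{P}_n F \in \vspan\{\varphi_j\}_{j=1}^n$, so $\tilde{P}_n F |_{\Omega}$ lies in $\vspan\{\varphi_j|_{\Omega}\}_{j=1}^n$ and is therefore a candidate approximation against which the $L^2(\Omega)$-best approximation $P_n f$ must compete.

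The key estimate then chains together three inequalities. First, since $F|_{\Omega} = f$, restriction gives
\[
\| f - \tilde{P}_n F|_{\Omega} \|_{L^2(\Omega)}
\leq \| F - \tilde{P}_n F \|_{L^2(\M)}.
\]
Second, applying \cref{thm:manifold_without_boundary_case} on $\M$ yields
\[
\| F - \tilde{P}_n F \|_{L^2(\M)} \leq \frac{1}{\lambda_{n+1}^{k/2}} \| \Delta^{k/2} F \|_{L^2(\M)}.
\]
Third, from $D(\Delta^{k/2}) = H^k(\M)$ and the equivalence of the graph norm of $\Delta^{k/2}$ with $\|\cdot\|_{H^k(\M)}$, one obtains $\|\Delta^{k/2} F\|_{L^2(\M)} \leq C' \| F \|_{H^k(\M)}$ for some constant $C'$ depending only on $\M$ and $k$. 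Combining these with the extension bound on $\|F\|_{H^k(\M)}$ and using the optimality of $P_n$ in $L^2(\Omega)$, that is $\| f - P_n f \|_{L^2(\Omega)} \leq \| f - \tilde{P}_n F|_{\Omega} \|_{L^2(\Omega)}$, yields \cref{eqn:extension_based_function_approx_with_boundary} (absorbing $C'$ into $C(\Omega, \M, k)$).

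For the operator bound, I would argue by duality exactly as in the proof of \cref{thm:operator_projection}: $\|A - A P_n\|_{L^2(\Omega)\to\H'} = \|(I - P_n) A^*\|_{\H'\to L^2(\Omega)}$. First, I would verify that $A^*: \H' \to H^k(\Omega)$ is bounded using the closed graph theorem applied to the hypothesis $\Range(A^*)\subset H^k(\Omega)$: if $g_n \to g$ in $\H'$ and $A^* g_n \to h$ in $H^k(\Omega)$, then continuity of $A^*$ into $L^2(\Omega)$ and the continuous embedding $H^k(\Omega)\hookrightarrow L^2(\Omega)$ force $h = A^* g$, closing the graph. Then, for each $g \in \H'$ with $\|g\|_{\H'}\leq 1$, apply the function-approximation bound just proved to $A^* g \in H^k(\Omega)$ and take the supremum over such $g$.

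The main obstacle, and the only substantive input beyond bookkeeping, is the norm comparison $\|\Delta^{k/2}F\|_{L^2(\M)} \lesssim \|F\|_{H^k(\M)}$. This is standard on smooth compact boundaryless manifolds because $\Delta$ has a compact resolvent with a full eigenbasis, and $\|\Delta^{k/2}\cdot\|_{L^2}^2 + \|\cdot\|_{L^2}^2$ is equivalent to the usual $H^k$ norm defined via local charts and partitions of unity. Everything else is routine: extension lifts the problem to the compact manifold, the boundaryless projection theorem supplies the spectral decay, restriction preserves the error, and $L^2(\Omega)$-best approximation can only improve upon the restriction of the global projection.
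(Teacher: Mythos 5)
Your proof of the function approximation bound \cref{eqn:extension_based_function_approx_with_boundary} follows the paper's own route step for step: extend $f$ to $F = Ef$ on $\M$, observe $\tilde P_n F|_\Omega$ is a competitor for $P_n f$, bound the error on $\M$ via \cref{thm:manifold_without_boundary_case}, and convert $\|\Delta^{k/2}F\|_{L^2(\M)}$ to $\|F\|_{H^k(\M)}$ via the graph-norm equivalence from \cref{lem:boundaryless_LBO_domain}, finally absorbing the extension constant. The one place you genuinely diverge from the paper is in establishing $\|A^*\|_{\H'\to H^k(\Omega)} < \infty$: you apply the closed graph theorem directly to $A^*:\H'\to H^k(\Omega)$, using the continuous embedding $H^k(\Omega)\hookrightarrow L^2(\Omega)$ to close the graph, whereas the paper takes a detour through \cref{thm:operator_projection} applied to $A E^*$ with $L=\Delta^{k/2}$ (which gives finiteness of $\|\Delta^{k/2} E A^*\|$), then uses the graph-norm equivalence and the inequality $\|f\|_{H^k(\Omega)}\leq \|Ef\|_{H^k(\M)}$ to pass to $\|A^*\|_{\H'\to H^k(\Omega)}$. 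Your closed-graph argument is valid, self-contained, and arguably cleaner: it avoids re-invoking the extension machinery and does not depend on how the Sobolev norm on $\Omega$ relates to that of the extension. The paper's route is slightly more explicit about the constants but relies on the same closed graph theorem one level down inside \cref{thm:operator_projection}, so neither approach is more elementary in the end.
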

\noindent
When the norm on $H^k(\mcal{M})$ is defined so that $\| \Delta^{k/2} f \|_{L^2(\mcal{M})} \leq \| f \|_{H^k(\mcal{M})}$, then the constant in the theorem is the same as the Sobolev extension constant in \cref{def:modified_p_extension}.
In general, this constant is greater than one and increases exponentially fast with $k$~\citep[Thm.~5.21]{adams2003sobolev}. However, we are mostly interested in the decay rate with respect to $n$ for a fixed $k$.

To understand this result in the setting described at the beginning of \cref{sec:domain_with_boundary_case}, suppose that $A: L^2(\Omega) \to L^2(\Omega)$ and $A^*$ are the solution operators for the boundary value problems respectively defined in \cref{eqn:PDE_on_domain_with_boundary,eqn:adjoint_PDE_on_domain_with_boundary}. By assumption, these solutions satisfy the global elliptic regularity estimate of \cref{eqn:global_regularity}, which implies that $\Range(A^*) \subset H^k(\Omega)$, where $k$ is the order of the elliptic differential operator $\mcal{L}$ in \cref{eqn:PDE_on_domain_with_boundary}.
In this case, \cref{thm:domain_with_boundary_case} yields the bound
\begin{equation}\label{eqn:extension_based_operator_approx_with_boundary}
    \| A - A P_n \|_{L^2(\mcal{M})\to L^2(\mcal{M})} \leq \frac{C(\Omega, \M, k)}{\lambda_{n+1}^{k/2}} \| A^* \|_{L^2(\mcal{M}) \to H^k(\Omega)},
\end{equation}
where $\| A^* \|_{L^2(\mcal{M}) \to H^k(\Omega)}$ is determined by the elliptic regularity constant in \cref{eqn:global_regularity} for the adjoint solution operator.

While it is applicable in a broader range of settings, the bound in \cref{eqn:extension_based_operator_approx_with_boundary} is typically worse than the one given by \cref{eqn:matching_based_operator_approx_with_boundary}, obtained using the method described in \cref{sec_matching_adjoint_boundary_conditions_method}.
The two main reasons for this are the introduction of the Sobolev extension constant, which is greater than one, and the observation that the eigenvalues of the LBO generally decrease when the spatial domain is enlarged from $\Omega$ to $\mcal{M}$. For the Laplacian operator with Dirichlet or Neumann boundary conditions, the monotonicity property of the eigenvalues when the domain is enlarged follows from the min-max principle~\citep[Thms.~62 and 63]{Canzani2013analysis}. This is also reflected in Weyl's law, where the asymptotic magnitudes of the eigenvalues are inverse to the volume of the manifold \citep[Thm.~72]{Canzani2013analysis}.

\subsection{Numerical Examples}

This section contains numerical examples to illustrate the approximation of the solution operator of non-self-adjoint PDEs by solving the PDEs with right-hand sides corresponding to eigenfunctions of the Laplace--Beltrami operator (see~\cref{thm:operator_projection,lem:LAstar_norm}).

\subsubsection{One-dimensional advection-diffusion equation}

We first consider the following one-dimensional (1D) advection-diffusion equation defined on the unit interval $\Omega=[0,1]$:
\begin{equation} \label{eq_advection_diffusion}
    \frac{1}{4}\frac{d^2 u}{d x^2}+5\frac{du}{dx}+u=f,
\end{equation}
with homogeneous Dirichlet boundary conditions. We aim to approximate the solution operator $A:f\to u$ associated with \cref{eq_advection_diffusion} by solving \cref{eq_advection_diffusion} with right-hand sides given by eigenfunctions of the Laplacian operator $L=-d^2/dx^2$ with homogeneous Dirichlet boundary conditions. To do so, we compute the first $N=601$ eigenvalues and eigenfunctions of the Laplacian operator $L=-d^2/dx^2$ satisfying
\[L \varphi_k = \lambda_k \varphi_k, \quad \varphi_k(0)=\varphi_k(1)=0,\quad 1\leq k\leq n,\]
using the Firedrake finite element software~\citep{rathgeber2016firedrake} and the Scalable Library for Eigenvalue Problem Computations (SLEPc)~\citep{hernandez2005slepc}. In this particular case, one could use the analytical expressions for the Laplacian eigenfunctions $\varphi_k(x) = \sin(k \pi x)$ and eigenvalues $\lambda_k = \pi^2 k^2$, but they may not be known in higher dimensions. We partition the interval $\Omega$ into $1000$ cells and discretize the functions with continuous piecewise-cubic polynomials. We then solve \cref{eq_advection_diffusion} with right-hand side $\varphi_k$ for $1\leq k\leq N$ and compute the corresponding solution $u_k$.

\begin{figure}[htbp]
    \centering
    \begin{overpic}[width=\textwidth]{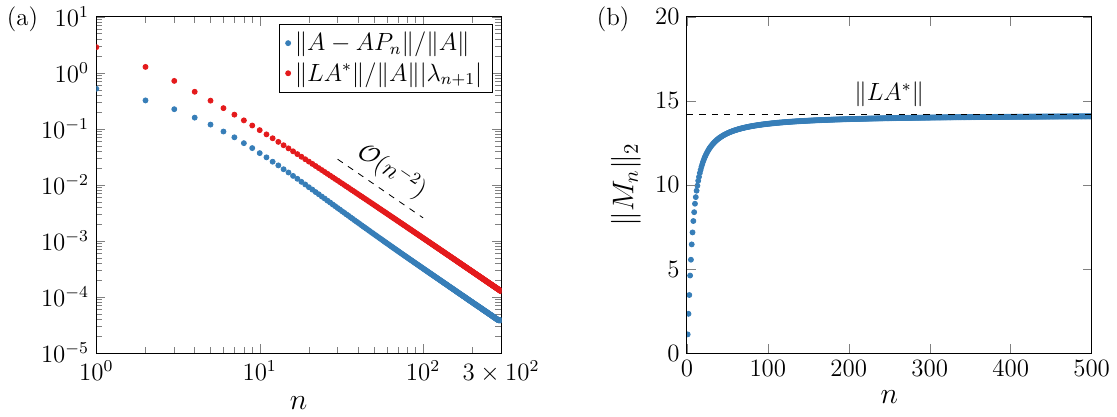}
    \end{overpic}
    \caption{(a) Left and right-hand side of the bound given by \cref{thm:operator_projection} for the approximation of the solution operator of the 1D advection-diffusion equation \cref{eq_advection_diffusion} by the eigenfunctions of the Laplacian operator. (b) Convergence of the spectral norm of the matrix $M_n$ to $\|LA^*\|$ as $n$ increases, illustrating \cref{lem:LAstar_norm}.}
    \label{fig_exp_adv_diff_1d}
\end{figure}

Letting $1\leq n<N$, we estimate the spectral norm $\|A-A P_n\|$ of $A-A P_n$ by restricting this operator to the subspace $\text{span}\{\phi_1,\ldots,\phi_N\}$. This is equivalent to computing the spectral norm of the dense matrix $[u_{n+1}, \ldots, u_N]$. We then approximate the largest singular value of the resulting matrix using the randomized singular value decomposition (SVD)~\citep{halko2011finding} with some power iteration to avoid the expensive computation of a full SVD and estimate $\|A-A P_n\|$. We note that alternative algorithms, such as randomized Lanczos iterations might be more accurate~\citep{kuczynski1992estimating}. Finally, we approximate the constant $\|L A^*\|$ in the right-hand side of \cref{eqn:operator_projection_bound} using \cref{lem:LAstar_norm}. Hence, we construct a matrix $M_n$ whose columns consist of the functions $\lambda_k u_k$ for $1\leq k\leq n$ and compute its spectral norm using the randomized SVD. Following \cref{lem:LAstar_norm}, $\|M_n\|_2$ converges to $\|LA^*\|$ as $n$ increases.

We plot the normalized left-hand side and right-hand side of \cref{thm:operator_projection} corresponding to \cref{eq_advection_diffusion} in \cref{fig_exp_adv_diff_1d}(a) and observe that the approximation error $\|A-A P_n\|$ decays as $n$ increases and is controlled by $\|LA^*\|/|\lambda_{n+1}|$, as guaranteed by \cref{thm:operator_projection}. Furthermore, the decay rate of $\|A-A P_n\|$ behaves asymptotically like $\mathcal{O}(n^{-2})$, which is the same as the decay rate of the eigenvalues $\lambda_n$ of the Laplacian operator $L$ given by Weyl's law~\citep{Weyl1911asymptotische}. In \cref{fig_exp_adv_diff_1d}(b), we display the convergence of $\|M_n\|_2$ as $n$ increases to the approximate value of $\|LA^*\|\approx 14.1$.

\subsubsection{Structural mechanic in two and three dimensions}

As a second numerical example, we consider the following linear elasticity (Navier--Cauchy) equations modeling the small elastic deformations of a body $\Omega\subset \R^d$, where $d\in\{2,3\}$, under the action of an external force $f:\Omega\to\R^d$:
\begin{equation} \label{eq_linear_elasticity}
    \begin{aligned}
        -\nabla \cdot \sigma & = f, \quad \text{in }\Omega,                  \\
        \sigma               & = \lambda \Tr(\epsilon(u))I+2\mu \epsilon(u), \\
        \epsilon(u)          & = \frac{1}{2}(\nabla u + \nabla u^\top),
    \end{aligned}
\end{equation}
where $\sigma$ is the stress tensor, $\lambda=1.25$ and $\mu=1$ are the Lam\'e parameters for the material considered, $\epsilon$ is the symmetric gradient, and $u:\Omega\to \R^d$ is the displacement vector field and solution to \cref{eq_linear_elasticity}~\citep[Sec.~3.3]{langtangen2017solving}. In two dimensions (resp.~3D), we consider the domain $\Omega = [0,1]\times [0,2]$ (resp.~$\Omega = [0,1]\times [0,2]^2$) and enforce a zero displacement boundary condition at the left extremity $x=0$ and traction free conditions on the other boundaries. We discretize the domain $\Omega$ uniformly into $500$ quadrilateral cells and $5000$ hexahedral cells and use continuous piecewise-quadratic and piecewise-linear polynomials to discretize the functions in 2D and 3D, respectively, using the Firedrake finite element software~\citep{rathgeber2016firedrake}. \cref{fig_beam_3d} displays the deformation of the beam under its weight, obtained by solving \cref{eq_linear_elasticity} using the right-hand side $f=(0,0,-1.6\times 10^{-2})$.

\begin{figure}[htbp]
    \centering
    \begin{overpic}[width=0.6\textwidth]{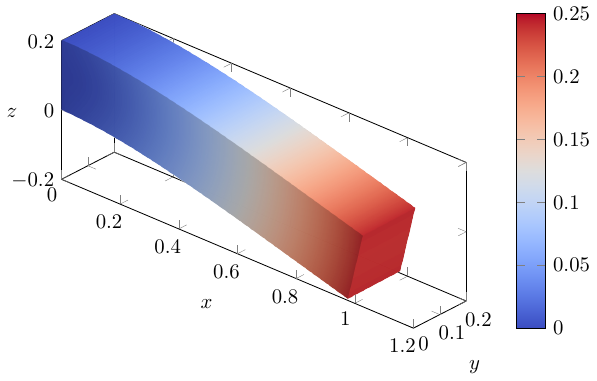}
    \end{overpic}
    \caption{Solution to the 3D linear elasticity equations~\eqref{eq_linear_elasticity} with right-hand side $f=(0,0,-1.6\times 10^{-2})$, modeling the deformation of the beam under its weight. The beam is deformed according to the displacement field $u$, and the color map represents the magnitude of the displacement.}
    \label{fig_beam_3d}
\end{figure}

We compute the first $N=601$ eigenvalues $\{\lambda_k\}_{k=1}^N$ and eigenfunctions $\{\varphi_k\}_{k=1}^N$ of the Laplacian operator, whose action is given by $L u=-\nabla^2 u = -(\nabla\cdot\nabla) u$, with the same boundary conditions as the linear elasticity equations, and use these eigenfunctions to approximate the solution operator $A$ associated with \cref{eq_linear_elasticity}. Hence, we follow the procedure described in \cref{sec:FourierSampling} and approximate $A$ by the operator $A P_n$, where $P_n$ is the orthogonal projection onto the span of the first $n$ eigenfunctions of the Laplacian operator. We then plot in \cref{fig_mechanics_2d}(a) and (c) the normalized left and right-hand sides in the bound given by \cref{thm:operator_projection} for the two and three-dimensional problems and observe that the upper bound (plotted in red) is relatively tight in estimating the decay rate of the approximation error of the solution operator (blue dots). Hence, the convergence rate bounded by $\mathcal{O}(n^{-1})$ in two dimensions and $\mathcal{O}(n^{-3/2})$ in 3D, following \cref{decay_eig_law}.

\begin{figure}[htbp]
    \centering
    \begin{overpic}[width=\textwidth]{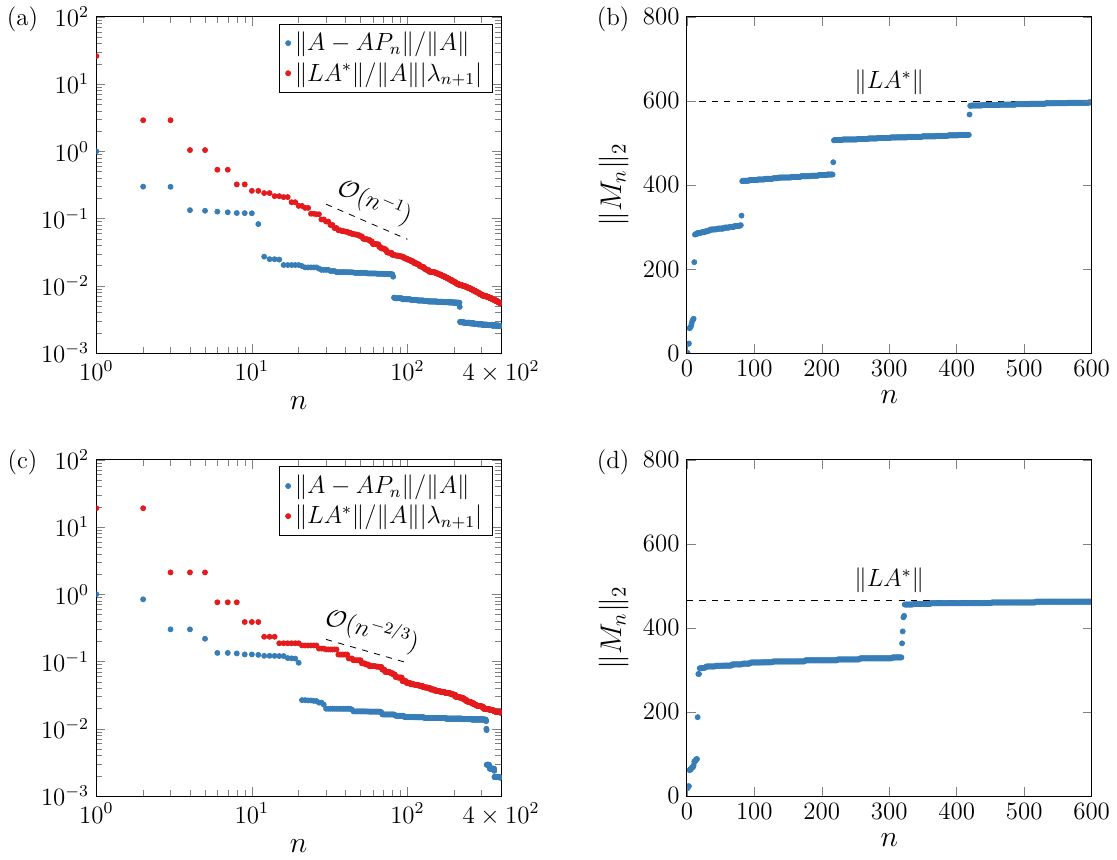}
    \end{overpic}
    \caption{(a) Normalized left and right-hand sides of the bound \cref{thm:operator_projection} for approximating the solution operator of the 2D linear elasticity equations using eigenfunctions of the Laplacian operator. (b) Convergence of the spectral norm of the matrix $M_n$ to the constant $\|LA^*\|$ as $n$ increases. (c)-(d) Same as (a)-(b) but for the 3D linear elasticity equations. The eigenvalues of the solution operator of the Laplacian operator decay as $\mathcal{O}(n^{-1})$ in 2D and $\mathcal{O}(n^{-2/3})$ in 3D, as predicted by \cref{decay_eig_law}.}
    \label{fig_mechanics_2d}
\end{figure}

The constant term $\|LA^*\|$ in the right-hand side of \cref{thm:operator_projection} is estimated using \cref{lem:LAstar_norm} as $\|LA^*\|=\lim_{n\to\infty}\|M_n\|_2$, where the matrix $M_n$ contains the columns $\lambda_k \varphi_k$ for $1\leq k\leq n$. We display in \cref{fig_mechanics_2d}(b) and (d) the spectral norm of $M_n$ as $n$ increases, computed using the randomized SVD, along with our estimate of $\|LA^*\|$.

\section{Convergence Rate for Perturbed Uniformly Elliptic PDEs}\label{sec:Perturbed}

To approximate the solution operator $A$ of a uniformly elliptic partial differential operator with lower order terms, such as \cref{eq_conv_diff_2}, one can exploit the results derived in \cref{sec:FourierSampling} to derive explicit error bounds on the approximant (cf.~\cref{thm:operator_projection}), which involve a constant $\|LA^*\|$ that depends on the magnitude of the convection coefficient $\bm c$. In this section, we study the behavior of the bound as the convection coefficient magnitude increases and derive a convergence rate for the approximation error of the solution operator of \cref{eq_conv_diff_2}. The primary motivation for this analysis originates from the deep learning experiments of \cref{sec_motivation}, which show that the approximation error increases linearly as the convection coefficient magnitude increases (\cref{fig_exp}(d)).

\begin{remark}[Spatial dimension]
    In this section, we assume the spatial dimension satisfies $d \geq 3$ because we rely on a result regarding the norm of the adjoint solution operator~\cite[Eq.~(4.6)]{kim2019green} formulated under this assumption. If this result can be established for $d = 1,2$, then our propositions will also hold, provided we assume that $p \geq 2$.
\end{remark}

Let $\Omega$ be a bounded domain with Lipschitz smooth boundary in $\R^d$ with $d\geq 3$. We consider the Sobolev space $H^1(\Omega)$ consisting of square-integrable functions $u\in L^2(\Omega)$ with first weak derivative in $L^2(\Omega)$, with norm
\[\|u\|_{H^1(\Omega)}=\|u\|_{L^2(\Omega)}+\|\nabla u\|_{L^2(\Omega)}.\]
In addition, we denote $H^1_0(\Omega)$ to be the closure of $C_c^{\infty}(\Omega)$ in $H^1(\Omega)$, where $C_c^{\infty}(\Omega)$ is the set of infinitely differentiable functions that are compactly supported in $\Omega$. If $u\in H^1_0(\Omega)$, then the Sobolev inequality~\citep[Sec.~5.6, Thm.~3]{evans10} states that, for each $q\in [1,2^*]$,
\begin{equation} \label{eq_sobolev}
    \|u\|_{L^{q}(\Omega)}\leq C_{d,q}\|\nabla u\|_{L^2(\Omega)},
\end{equation}
where $2^* = 2d/(d-2)$ is the Sobolev conjugate of $2$ and the constant $C_{d,q}$ depends only on $d$, $q$, and $\Omega$. Let $H^{-1}(\Omega)$ be the dual space of $H^1_0(\Omega)$ such that the norm of a function $F\in H^{-1}(\Omega)$ is given by
\[\|F\|_{H^{-1}(\Omega)}=\sup \{\langle F, u\rangle\colon u\in H^1_0(\Omega),\,\|u\|_{H^1_0(\Omega)}\leq 1\}.\]

We are interested in recovering the solution operator $A$ associated with a second order elliptic operator $\L:H^1_0(\Omega)\to H^{-1}(\Omega)$ of the form
\[
    \L u = -\sum_{i,j=1}^d D_i \left(a^{ij}(x) D_j u\right) + \sum_{i=1}^d c^i(x)D_i u,
\]
which is abbreviated as
\[
    \L u = -\vdiv(\bm A\nabla u)+\bm c \cdot \nabla u.
\]
The matrix $\bm A=(a^{ij})$ is symmetric, consists of measurable bounded coefficient functions and satisfies the uniformly elliptic condition, i.e., there exists a constant $\lambda>0$ such that
\begin{equation} \label{eq_unif_ellipt}
    \lambda|\xi|^2\leq \bm A(x)\xi\cdot\xi,\quad \forall x\in \Omega,\, \xi\in \R^d.
\end{equation}
Following the analysis of~\citet{kim2019green}, we assume that the lower order coefficient vector $\bm c=(c^1,\ldots,c^d)\in L^p(\Omega)$ for some $p>d$. Under these conditions, for every right-hand side $F\in H^{-1}(\Omega)$, there exists a unique $u\in H^1_0(\Omega)$ such that~\citep[Lem.~4.2]{kim2019green}
\begin{equation} \label{eq_L_tilde_elliptic}
    \L u = -\div(\bm A\nabla u)+\bm c \cdot \nabla u = F,\quad \text{in }\Omega.
\end{equation}
Here, \cref{eq_L_tilde_elliptic} holds weakly in the sense that for all $\phi\in C_c^\infty(\Omega)$, we have
\[\int_\Omega \bm A\nabla u\cdot \nabla \phi+\bm c \cdot \nabla u\phi\d x = \int_\Omega F \phi \d x.\]
Then, the solution operator $A:H^{-1}(\Omega)\to H^1_0(\Omega)$, defined as $A(F) = u$, is a bijection. Moreover, the $H^{-1}(\Omega)\to H^1_0(\Omega)$ adjoint of $A$ is given by the operator $A^*:H^{-1}(\Omega)\to H^1_0(\Omega)$, which is the solution operator of the adjoint problem of \cref{eq_L_tilde_elliptic}~\cite[Eq.~(4.6)]{kim2019green}:
\[
    \L^\top=-\div(\bm{A}\nabla u+\bm{c} u)=F,\quad \text{in }\Omega.
\]
When the coefficient vector $\bm{c}$ is sufficiently small, one can view the operator $\L$ as a perturbation of the uniformly elliptic operator $L:H_0^1(\Omega)\to H^{-1}(\Omega)$, defined as
\begin{equation} \label{eq_L_elliptic}
    L u = -\vdiv(\bm A\nabla u), \quad u\in H_0^1(\Omega).
\end{equation}
We denote the solution operator associated with $L$ as $T:H^{-1}(\Omega)\to H^1_0(\Omega)$, which satisfies $T(F) = u$, where $u$ is a solution to $L u = -\div(\bm A\nabla u) = F$.

The following result constructs a low-rank approximation to the solution operator $A$ that is controlled by the eigenvalue decay of the prior uniformly elliptic solution operator $L$. The proof of \cref{sec_approx_H} is deferred to \cref{sec_proof_section_perturbed} and combines \cref{thm:operator_projection} with perturbation theory results of linear operators~\citep{Kato1980perturbation}.

\begin{theorem}[Approximation in $H^{-1}$] \label{sec_approx_H}
    Let $A$ and $T:H^{-1}(\Omega)\to H^1_0(\Omega)$ denote the solution operators associated with the elliptic operators $\L$ and $L$, defined respectively in \cref{eq_L_tilde_elliptic,eq_L_elliptic}. Let $n\geq 1$ and $P_n:H^{-1}(\Omega)\to H^{-1}(\Omega)$ denote the $H^{-1}$-projection onto the space spanned by the first $n$ eigenfunctions of $L$. There exists a constant $C(\Omega,p)$, depending only on $\Omega$ and $p$ such that if $\bm c\in L^p(\Omega)$ satisfies $\|\bm c\|_{L^p(\Omega)}<\lambda/C(\Omega,p)$, then the operator $A$ can be approximated by the operator $A P_n$ in the $H^{-1}(\Omega)$-norm as
    \[
        \|A-A P_n\|_{H^{-1}(\Omega)\to H^{-1}(\Omega)}\leq \frac{1}{\lambda_{n+1}}\frac{\tilde{C}(\bm A,\Omega,p)}{\lambda-C(\Omega,p)\|\bm c\|_{L^p(\Omega)}}\| T \|_{H^{-1}(\Omega) \to H^1_0(\Omega)},
    \]
    where $\lambda_{n+1}$ is the $(n+1)$-th eigenvalue of the operator $L$, and $\tilde{C}(\bm A,\Omega,p)$ is a constant independent of $\bm c$.
\end{theorem}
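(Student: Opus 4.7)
The plan is to invoke \cref{thm:operator_projection} with the unperturbed uniformly elliptic operator $L$ playing the role of the self-adjoint prior on $\H=\H'=H^{-1}(\Omega)$, and then to control $\|LA^*\|$ by a Neumann-series perturbation argument. First, I would equip $H^{-1}(\Omega)$ with the Riesz inner product induced by $T = L^{-1}$, under which $T$ is an isometry onto $H^1_0(\Omega)$ endowed with the equivalent $\bm A$-weighted inner product $\int \bm A\nabla\cdot\nabla\cdot \d x$. Viewed as an unbounded operator on $H^{-1}$ with domain $D(L)=H^1_0(\Omega)$, $L$ is symmetric, has compactly embedded domain (Rellich--Kondrachov applied to $T$), and its classical eigenpairs $(\lambda_k,\varphi_k)$ provide, after renormalization by $\lambda_k^{1/2}$, an orthonormal basis of $H^{-1}(\Omega)$ with the same eigenvalues $0<\lambda_1\leq\lambda_2\leq\cdots\to\infty$. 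Hence \cref{lem:function_projection} applies in $H^{-1}$, and since $\Range(A^*)\subset H^1_0(\Omega)=D(L)$ by \citep[Eq.~(4.6)]{kim2019green}, \cref{thm:operator_projection} yields
\[
\|A-A P_n\|_{H^{-1}\to H^{-1}}\leq \frac{1}{\lambda_{n+1}}\,\|LA^*\|_{H^{-1}\to H^{-1}}.
\]

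Next, I would write the perturbation as $\mathcal{C}v:=\div(\bm c v)$, so that integration by parts gives $\mathcal{L}^\top = L-\mathcal{C}$ and hence $\mathcal{L}^\top = L(I-T\mathcal{C})$. To bound $\mathcal{C}$, combine $\|\div(\bm c v)\|_{H^{-1}}\leq \|\bm c v\|_{L^2}$, Hölder's inequality with exponents $(p,\,2p/(p-2))$, and the Sobolev inequality~\eqref{eq_sobolev} applied at $q=2p/(p-2)\leq 2^*$, which is permissible since $p>d\geq 3$. This produces a constant $C(\Omega,p,d)$ with
\[
\|\mathcal{C}\|_{H^1_0\to H^{-1}}\leq C(\Omega,p,d)\,\|\bm c\|_{L^p(\Omega)}.
\]
Uniform ellipticity~\eqref{eq_unif_ellipt} gives $\|T\|_{H^{-1}\to H^1_0}\leq 1/\lambda$, so the smallness hypothesis $\|\bm c\|_{L^p}<\lambda/C$ forces $\|T\mathcal{C}\|_{H^1_0\to H^1_0}<1$; the Neumann series then gives $A^*=(I-T\mathcal{C})^{-1}T$, with
\[
\|A^*\|_{H^{-1}\to H^1_0}\leq \frac{\|T\|_{H^{-1}\to H^1_0}}{1-\|T\|_{H^{-1}\to H^1_0}\,C\|\bm c\|_{L^p}}\leq \frac{\lambda\,\|T\|_{H^{-1}\to H^1_0}}{\lambda-C\|\bm c\|_{L^p}}.
\]

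Finally, essential boundedness of the entries of $\bm A$ gives $\|L\|_{H^1_0\to H^{-1}}\leq \Lambda(\bm A)$, whence
\[
\|LA^*\|_{H^{-1}\to H^{-1}}\leq \Lambda\,\|A^*\|_{H^{-1}\to H^1_0}\leq \frac{\Lambda\lambda\,\|T\|_{H^{-1}\to H^1_0}}{\lambda-C\|\bm c\|_{L^p}}.
\]
Combining with the bound from \cref{thm:operator_projection} and setting $\tilde C(\bm A,\Omega,p,d):=\Lambda\lambda$ produces the claimed estimate. The main obstacle is the spectral setup in the first step: one must carefully identify $L$ as a self-adjoint operator on $H^{-1}$ (rather than the more familiar $L^2$) whose eigenvalues coincide with those of its classical $L^2$ realization, so that \cref{thm:operator_projection} applies with precisely the $\lambda_k$ appearing in the statement. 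Once that is in place, the remainder is standard Hölder--Sobolev bookkeeping together with a Neumann series for the perturbation $\mathcal{L}^\top - L$.
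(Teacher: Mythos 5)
Your proposal is correct and follows essentially the same route as the paper: equip $H^{-1}(\Omega)$ with the $T$-induced inner product so that $L_{-1}$ becomes self-adjoint (the paper's \cref{lem_self_adjoint_uniform}), apply \cref{thm:operator_projection}, and then control the resulting constant by a relative-boundedness / Neumann-series perturbation argument built on the same H\"older--Sobolev estimate for the convection term (the paper's \cref{lem_rel_bounded} and \cref{prop_perturbed_inverse}, which invoke Kato's stability theorem --- itself a Neumann series --- and the adjoint-norm equality $\|A\|=\|A^*\|$ rather than your direct factorization $\mathcal{L}^\top=L(I-T\mathcal{C})$). The only small imprecisions are (i) you write the final inequality from \cref{thm:operator_projection} in the plain $H^{-1}$ operator norm, whereas it actually holds in the $T$-induced norm, so a norm-equivalence constant (depending on $\bm A$, $\Omega$) must be absorbed into $\tilde C$, and (ii) $\|T\|_{H^{-1}\to H^1_0}\leq 1/\lambda$ is not exact under the paper's $H^1_0$-norm convention $\|u\|_{L^2}+\|\nabla u\|_{L^2}$; it holds only up to a Poincar\'e-type factor, which is again absorbed into the constants $C(\Omega,p,d)$ --- neither affects the structure of the argument.
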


For a fixed $n\geq 1$, the right-hand side of the approximation bound in \cref{sec_approx_H} depends on the magnitude of the convection coefficient as $\mathcal{O}(1/(\lambda-C(\Omega,p)\|\bm c\|_{L^p(\Omega)}))$. When the perturbation is sufficiently small, such that $C \|\bm c\|_{L^p(\Omega)}\leq \epsilon$, with $0<\epsilon<1$, then an asymptotic estimate of the bound shows that
\[\|A-A P_n\|_{H^{-1}(\Omega)\to H^{-1}(\Omega)} \leq M\frac{\| T \|_{H^{-1}(\Omega) \to H^1_0(\Omega)}}{\lambda_{n+1}}\left(1+\mathcal{O}\left(\|\bm c\|_{L^p(\Omega)}\right)\right),\]
for some constant $M>0$ independent of $\bm c$. This result is in agreement with the numerical experiments of \cref{sec_motivation}, which revealed a linear growth of the error between the learned and exact solution operator of a 1D convection-diffusion equation as the magnitude of the convection term increases (see \cref{fig_exp}(c)). Intuitively, this suggests that \cref{sec_approx_H} might explain why deep learning models do not require the adjoint when learning solution operators associated with PDEs.

\begin{remark}[Asymptotic growth of the eigenvalues]
    The eigenvalues $\{\lambda_j\}_{j\geq 1}$ in \cref{sec_approx_H} follows from applying the spectral theorem for compact self-adjoint operators~\citep[App.~D.4, Thm.~7]{evans10} to the operator $L_{-1}:D(L)=H_0^1(\Omega) \subset H^{-1}(\Omega) \to H^{-1}(\Omega)$. Then, there exists an orthonormal basis $\{\varphi_j\}_{j\geq 1}$ of $H^{-1}(\Omega)$ such that
    \begin{align*}
        L(\varphi_j) & = \lambda_j \varphi_j, \quad \text{in }\Omega, \\
        \varphi_j    & = 0, \quad \text{on }\partial\Omega,
    \end{align*}
    where $\varphi_j\in H_0^1(\Omega)$. In particular, the eigenvalues agree with the standard $L^2$-eigenvalues obtained by restricting $L$ to a domain included in $H^2(\Omega)$, and enjoy the same asymptotic estimates~\citep{beals1970asymptotic}. Moreover, following elliptic regularity results, additional regularity assumptions on the coefficients of $L$ imply that the eigenfunctions $\{\varphi_j\}_{j\geq 1}$ are smooth~\citep[Sec.~6.3]{evans10}.
\end{remark}

While the approximation bound of \cref{sec_approx_H} is stated in the $H^{-1}(\Omega)$-norm and can be difficult to interpret, the result generalizes to higher-order Sobolev spaces, such as $L^2(\Omega)$ and $H^m(\Omega)$ by assuming higher regularity on the coefficients of the PDE, as explained in \cref{thm_approx_L2,rem_higher}.

\begin{theorem}[Approximation in $L^2$] \label{thm_approx_L2}
    Let $A$ and $T:L^2(\Omega)\to H^2_0(\Omega)$ denote the solution operators associated with the elliptic operators $\L$ and $L$, defined respectively in \cref{eq_L_tilde_elliptic,eq_L_elliptic}. Assume that the coefficient matrix $\bm A = (a^{ij})$ satisfies $a^{ij}\in C^1(\Omega)$, the lower order coefficient vector $\bm c=(c^1,\ldots,c^d)\in L^\infty(\Omega)$, and that $\partial \Omega\in C^2$. Let $n\geq 1$ and $P_n:L^2(\Omega)\to L^2(\Omega)$ denote the $L^2$-projection onto the space spanned by the first $n$ eigenfunctions of $L$. There exists a constant $C(\Omega)$, depending only on $\Omega$, such that if $\bm c$ satisfies $\|\bm c\|_{L^\infty(\Omega)}<\lambda/C(\Omega)$, then the operator $A$ can be approximated by the operator $A P_n$ in the $L^2(\Omega)$-norm as
    \[
        \|A-A P_n\|_{L^2(\Omega)\to L^2(\Omega)}\leq \frac{1}{\lambda_{n+1}}\frac{\tilde{C}(\bm A,\Omega,p)}{\lambda-C(\Omega)\|\bm c\|_{L^\infty(\Omega)}}\| T \|_{L^2(\Omega) \to H^2_0(\Omega)},
    \]
    where $\lambda_{n+1}$ is the $(n+1)$-th eigenvalue of the operator $L$, and $\tilde{C}(\bm A,\Omega)$ is a constant independent of $\bm c$.
\end{theorem}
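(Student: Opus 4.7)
The strategy parallels that of Theorem~\ref{sec_approx_H}, with the target space $H^{-1}(\Omega)$ replaced by $L^2(\Omega)$ and the stronger hypotheses ($a^{ij}\in C^1$, $\bm c \in L^\infty$, $\partial\Omega\in C^2$) exploited to unlock standard $H^2$-elliptic regularity. First I would realize the prior operator $L=-\vdiv(\bm A\nabla\,\cdot\,)$ as an unbounded self-adjoint operator on $\H=L^2(\Omega)$ with domain $D(L)=H^2(\Omega)\cap H^1_0(\Omega)$; symmetry and uniform ellipticity of $\bm A$ yield self-adjointness, and the Rellich--Kondrachov embedding makes $D(L)\hookrightarrow L^2(\Omega)$ compact. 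Lemma~\ref{lem:function_projection} then furnishes an orthonormal eigenbasis $\{\varphi_j\}$ with $\lambda_j\to\infty$ and the projection bound $\|f-P_n f\|_{L^2}\le\lambda_{n+1}^{-1}\|L f\|_{L^2}$ for every $f\in D(L)$.

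Next I would verify the range condition $\Range(A^*)\subset D(L)$ demanded by Theorem~\ref{thm:operator_projection}. For $g\in L^2(\Omega)$, the adjoint solution $v=A^* g$ satisfies $\L^\top v=-\vdiv(\bm A\nabla v+\bm c v)=g$ weakly in $H^1_0(\Omega)$, and classical $H^2$-elliptic regularity delivers $v\in H^2(\Omega)\cap H^1_0(\Omega)$ together with $\|v\|_{H^2}\le C_1(\bm A,\Omega)\bigl(\|g\|_{L^2}+\|v\|_{L^2}\bigr)$. Theorem~\ref{thm:operator_projection} then gives
\[
\|A-A P_n\|_{L^2\to L^2}\le\frac{1}{\lambda_{n+1}}\,\|L A^*\|_{L^2\to L^2}.
\]
Since $L$ is second order with $C^1$ coefficients, $\|L v\|_{L^2}\le C_2(\bm A,\Omega)\|v\|_{H^2}$; combining with the regularity estimate produces $\|L A^* g\|_{L^2}\le C_1 C_2\bigl(\|g\|_{L^2}+\|A^* g\|_{L^2}\bigr)$, so the problem reduces to bounding $\|A^*\|_{L^2\to L^2}$.

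To control $\|A^*\|_{L^2\to L^2}$ by $\|T\|_{L^2\to H^2_0(\Omega)}$ and $\|\bm c\|_{L^\infty}$, I would follow the perturbation-theoretic template used in the proof of Theorem~\ref{sec_approx_H}. Writing $\L^\top = L - B$ with $B:v\mapsto -\vdiv(\bm c v)$ the lower-order perturbation and testing in the weak formulation yields the operator identity $A^*=T+T B A^*$, equivalently $(I-T B)A^*=T$ on $L^2(\Omega)$. The operator $B$ is bounded from $L^2(\Omega)$ into $H^{-1}(\Omega)$ with norm at most $C(\Omega)\|\bm c\|_{L^\infty}$ (by duality with $\nabla:H^1_0\to L^2$ and the Sobolev inequality~\eqref{eq_sobolev}), while $T:H^{-1}\to H^1_0\hookrightarrow L^2$ is bounded by uniform ellipticity with operator norm of order $1/\lambda$. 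Consequently $\|T B\|_{L^2\to L^2}\le C(\Omega)\|\bm c\|_{L^\infty}/\lambda$, so whenever $\|\bm c\|_{L^\infty}<\lambda/C(\Omega)$ the Neumann series for $(I-T B)^{-1}$ converges and gives
\[
\|A^*\|_{L^2\to L^2}\le\frac{\lambda\,\|T\|_{L^2\to H^2_0(\Omega)}}{\lambda-C(\Omega)\|\bm c\|_{L^\infty}}.
\]
Substituting back into the bound on $\|L A^*\|$ and absorbing all $\bm c$-independent constants into a single $\tilde C(\bm A,\Omega,d)$ yields the claimed estimate.

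\textbf{Main obstacle.} The principal difficulty is the Neumann-series step: although $\bm c$ is merely $L^\infty$, the composition $T B$ must be shown to be bounded on $L^2(\Omega)$. This forces one to view $B$ as a map into $H^{-1}$ rather than $L^2$ and then to exploit the smoothing property of $T$ from $H^{-1}$ into $H^1_0\subset L^2$ to close the estimate. Verifying that the distributional identity $\L^\top v=L v+B v$ transfers faithfully to the operator identity $A^*=T+T B A^*$ also requires care with the duality pairings, since $B A^* g$ lives in $H^{-1}$ even though both $A^* g$ and $g$ live in $L^2$. Once the resolvent estimate $(I-TB)^{-1}$ is under control, tracking the remaining constants proceeds as in the proof of Theorem~\ref{sec_approx_H}.
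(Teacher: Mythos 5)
Your plan reproduces the paper's broad strategy: realize $L=-\vdiv(\bm A\nabla\cdot)$ as a self-adjoint operator on $L^2(\Omega)$ with domain $H^2(\Omega)\cap H^1_0(\Omega)$ (the paper supplies this via \cref{lem_self_adjoint_uniform_2} at $m=0$), invoke \cref{thm:operator_projection} to get $\|A-AP_n\|\le\lambda_{n+1}^{-1}\|LA^*\|$, and control the constant by relative-boundedness/perturbation. Your Neumann-series inversion of $(I-TB)$ is the same mechanism the paper cites as Kato's stability theorem inside \cref{prop_higher_regularity}, so that part is not really a different route.

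There is, however, a step that would not go through as written. You assert that the adjoint equation $\L^\top v = g$ with $\bm c\in L^\infty(\Omega)$ has a solution $v\in H^2(\Omega)\cap H^1_0(\Omega)$ satisfying $\|v\|_{H^2}\le C_1(\bm A,\Omega)(\|g\|_{L^2}+\|v\|_{L^2})$ with $C_1$ independent of $\bm c$. Writing $\L^\top v = Lv - \vdiv(\bm c v)$, the term $\vdiv(\bm c v)$ lies only in $H^{-1}(\Omega)$, not in $L^2(\Omega)$, so $Lv$ is a priori only in $H^{-1}(\Omega)$ and the classical $H^2$ theory for $L$ with $L^2$ right-hand side does not apply; expanding the divergence would force $\bm c$ to be (weakly) differentiable and would pollute the constant with $\|\vdiv\bm c\|_{L^\infty}$. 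This is precisely why \cref{prop_higher_regularity} runs the perturbation argument on the \emph{forward} operator, estimating $\|\bm c\cdot\nabla u\|_{L^2}\le(C(\Omega)/\lambda)\|\bm c\|_{L^\infty}\|Lu\|_{L^2}$, where no divergence of $\bm c$ appears and only $\|\bm c\|_{L^\infty}$ enters, and then appeals to $\|A^*\|_{L^2\to H^2_0}=\|A\|_{L^2\to H^2_0}$. To match the paper, you should therefore perturb $\L u=f$ around $L u=f$ and transfer the resulting bound on $\|A\|_{L^2\to H^2_0}$ to $A^*$ as \cref{prop_higher_regularity} does, rather than estimating $H^2$-regularity of the adjoint boundary value problem directly.
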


\begin{remark}[Higher-order regularity approximation bounds] \label{rem_higher}
    When the coefficient matrix $\bm{A}$ and the lower order coefficient vector $\bm{c}$ are sufficiently smooth and satisfy the regularity assumptions of \cref{prop_higher_regularity} in \cref{app_perturbation}, we can derive higher-order regularity estimates for the low-rank approximation of the solution operator $A$. This follows naturally from the proof of \cref{sec_approx_H} by combining the perturbation bounds of \cref{prop_higher_regularity} along with the introduction of Sobolev spaces $\mathcal{H}_m$ on which the operator $L$ is self-adjoint (see~\cref{lem_self_adjoint_uniform_2}) for $m\geq 1$. In particular, a compactly supported function $f\in H^m(\Omega)$ on $\Omega$ is in $\mathcal{H}_m$ and the approximation bound holds in the $H^m(\Omega)$-norm.
\end{remark}

\section{Summary and Discussion} \label{sec_conclusions}

We introduced the problem of approximating operators from queries of their action without access to the corresponding adjoint operator. In finite dimensions, we showed that no low-rank recovery algorithm can approximate a matrix efficiently from matrix-vector products unless one has prior information on the angle between its left and right singular subspaces.

Interestingly, compact non-self-adjoint infinite-dimensional operators $A:\mathcal{H}\to\mathcal{H}'$ between Hilbert spaces $\mathcal{H}$ and $\mathcal{H}'$ can be approximated efficiently from operator evaluations without the adjoint if one has prior knowledge about the regularity of the adjoint operator. In this case, we derived low-rank approximation bounds that depend on the eigenvalue decay of a self-adjoint operator $L:\mathcal{H}\to\mathcal{H}$, which captures this regularity information and is used as a prior for approximating the operator $A$. Our result applies naturally to non-self-adjoint differential operators, such as the steady-state advection-diffusion equation, whose adjoint solution operators have Sobolev regularity.
For such operators, powers of the Laplace-Beltrami operator can be used universally for the prior $L$, leading to explicit error bounds and convergence rates.

Our numerical experiments on learning Green's functions associated with advection-diffusion equations revealed that the non-self-adjointness of the operator impacts the performance of deep learning models, which has not been analyzed previously in the literature. Hence, we observed that the error between the learned and exact Green's functions increases linearly with the magnitude of the convection coefficient, which matches the behavior of our theoretical error bound.

This paper is a first step towards understanding non-self-adjoint operator learning and opens up several interesting directions for future research in preconditioning operator learning. Future interesting directions include the selection of the prior differential operator $L$ for generating training datasets, the estimation of the constant $\|LA^*\|$ in our approximation bounds for a variety of problems, the derivation of optimal sample complexity results for general elliptic problems, and the extensions of the non-symmetric matrix recovery results to other classes of matrices, beyond low-rank.

\section*{Data Availability}
Codes and data supporting this paper are publicly available on GitHub at \url{https://github.com/NBoulle/OperatorLearningAdjoint}.

\acks{
    We thank Yuji Nakatsukasa for the discussions related to this work. This work was supported by the Office of Naval Research (ONR) under grant N00014-23-1-2729. N.B. was supported by an INI-Simons Postdoctoral Research Fellowship, D.H. was supported by National Science Foundation grant DGE-2139899, and A.T. was supported by National Science Foundation grants DMS-1952757 and DMS-2045646.}

\vskip 0.2in
\bibliography{biblio}

\begin{thebibliography}{78}
\providecommand{\natexlab}[1]{#1}
\providecommand{\url}[1]{\texttt{#1}}
\expandafter\ifx\csname urlstyle\endcsname\relax
  \providecommand{\doi}[1]{doi: #1}\else
  \providecommand{\doi}{doi: \begingroup \urlstyle{rm}\Url}\fi

\bibitem[Abadi et~al.(2015)Abadi, Agarwal, Barham, et~al.]{tensorflow2015}
M.~Abadi, A.~Agarwal, P.~Barham, et~al.
\newblock {TensorFlow}: Large-scale machine learning on heterogeneous systems,
  2015.
\newblock Software available from \url{tensorflow.org}.

\bibitem[Adams and Fournier(2003)]{adams2003sobolev}
R.~A. Adams and J.~J.~F. Fournier.
\newblock \emph{Sobolev spaces}.
\newblock Elsevier, 2003.

\bibitem[Aflalo and Kimmel(2013)]{Aflalo2013spectral}
Y.~Aflalo and R.~Kimmel.
\newblock Spectral multidimensional scaling.
\newblock \emph{Proc. Natl. Acad. Sci. U.S.A.}, 110\penalty0 (45):\penalty0
  18052--18057, 2013.

\bibitem[Aflalo et~al.(2015)Aflalo, Brezis, and Kimmel]{Aflalo2015optimality}
Y.~Aflalo, H.~Brezis, and R.~Kimmel.
\newblock On the optimality of shape and data representation in the spectral
  domain.
\newblock \emph{SIAM J. Imaging Sci.}, 8\penalty0 (2):\penalty0 1141--1160,
  2015.

\bibitem[Beals(1970)]{beals1970asymptotic}
R.~Beals.
\newblock {Asymptotic behavior of the Green's function and spectral function of
  an elliptic operator}.
\newblock \emph{J. Funct. Anal.}, 5\penalty0 (3):\penalty0 484--503, 1970.

\bibitem[Bj{\"o}rck and Golub(1973)]{Bjorck1973numerical}
A.~Bj{\"o}rck and G.~H. Golub.
\newblock Numerical methods for computing angles between linear subspaces.
\newblock \emph{Math. Comput.}, 27\penalty0 (123):\penalty0 579--594, 1973.

\bibitem[Boull{\'e} and Townsend(2022)]{boulle2022generalization}
N.~Boull{\'e} and A.~Townsend.
\newblock A generalization of the randomized singular value decomposition.
\newblock In \emph{International Conference on Learning Representations}, 2022.

\bibitem[Boull{\'e} and Townsend(2023{\natexlab{a}})]{boulle2021learning}
N.~Boull{\'e} and A.~Townsend.
\newblock Learning elliptic partial differential equations with randomized
  linear algebra.
\newblock \emph{Found. Comput. Math.}, 23\penalty0 (2):\penalty0 709--739,
  2023{\natexlab{a}}.

\bibitem[Boull{\'e} and Townsend(2023{\natexlab{b}})]{boulle2023mathematical}
N.~Boull{\'e} and A.~Townsend.
\newblock {A Mathematical Guide to Operator Learning}.
\newblock \emph{arXiv preprint arXiv:2312.14688}, 2023{\natexlab{b}}.

\bibitem[Boull{\'e} et~al.(2020)Boull{\'e}, Nakatsukasa, and
  Townsend]{boulle2020rational}
N.~Boull{\'e}, Y.~Nakatsukasa, and A.~Townsend.
\newblock Rational neural networks.
\newblock In \emph{Advances in Neural Information Processing Systems},
  volume~33, pages 14243--14253, 2020.

\bibitem[Boull{\'e} et~al.(2022{\natexlab{a}})Boull{\'e}, Earls, and
  Townsend]{boulle2021data}
N.~Boull{\'e}, C.~J. Earls, and A.~Townsend.
\newblock {Data-driven discovery of Green's functions with human-understandable
  deep learning}.
\newblock \emph{Sci. Rep.}, 12\penalty0 (1):\penalty0 4824, 2022{\natexlab{a}}.

\bibitem[Boull{\'e} et~al.(2022{\natexlab{b}})Boull{\'e}, Kim, Shi, and
  Townsend]{boulle2022learning}
N.~Boull{\'e}, S.~Kim, T.~Shi, and A.~Townsend.
\newblock Learning {G}reen’s functions associated with time-dependent partial
  differential equations.
\newblock \emph{J. Mach. Learn. Res.}, 23\penalty0 (218):\penalty0 1--34,
  2022{\natexlab{b}}.

\bibitem[Boull{\'e} et~al.(2023)Boull{\'e}, Halikias, and
  Townsend]{boulle2023elliptic}
N.~Boull{\'e}, D.~Halikias, and A.~Townsend.
\newblock {Elliptic PDE learning is provably data-efficient}.
\newblock \emph{Proc. Natl. Acad. Sci. U.S.A.}, 120\penalty0 (39):\penalty0
  e2303904120, 2023.

\bibitem[Bramble et~al.(1997)Bramble, Lazarov, and Pasciak]{Bramble1997least}
J.~Bramble, R.~Lazarov, and J.~Pasciak.
\newblock A least-squares approach based on a discrete minus one inner product
  for first order systems.
\newblock \emph{Math. Comput.}, 66\penalty0 (219):\penalty0 935--955, 1997.

\bibitem[Brezis(2010)]{Brezis2010functional}
H.~Brezis.
\newblock \emph{Functional analysis, {S}obolev spaces and partial differential
  equations}.
\newblock Springer Science \& Business Media, 2010.

\bibitem[Byrd et~al.(1995)Byrd, Lu, Nocedal, and Zhu]{byrd1995limited}
R.~H. Byrd, P.~Lu, J.~Nocedal, and C.~Zhu.
\newblock A limited memory algorithm for bound constrained optimization.
\newblock \emph{SIAM J. Sci. Comput.}, 16\penalty0 (5):\penalty0 1190--1208,
  1995.

\bibitem[Canuto and Quarteroni(1982)]{Canuto1982approximation}
C.~Canuto and A.~Quarteroni.
\newblock Approximation results for orthogonal polynomials in {Sobolev} spaces.
\newblock \emph{Mathematics of Computation}, 38\penalty0 (157):\penalty0
  67--86, 1982.

\bibitem[Canuto et~al.(2006)Canuto, Hussaini, Quarteroni, and
  Zang]{Canuto2006spectral}
C.~Canuto, M.~Y. Hussaini, A.~Quarteroni, and T.~A. Zang.
\newblock \emph{Spectral Methods: Fundamentals in Single Domains}.
\newblock Springer, 2006.

\bibitem[Canzani(2013)]{Canzani2013analysis}
Y.~Canzani.
\newblock Analysis on manifolds via the {L}aplacian.
\newblock Harvard University, 2013.

\bibitem[Chen and Chen(1995)]{chen1995universal}
T.~Chen and H.~Chen.
\newblock Universal approximation to nonlinear operators by neural networks
  with arbitrary activation functions and its application to dynamical systems.
\newblock \emph{IEEE Trans. Neur. Netw.}, 6\penalty0 (4):\penalty0 911--917,
  1995.

\bibitem[Clark(1967)]{clark1967asymptotic}
C.~Clark.
\newblock The asymptotic distribution of eigenvalues and eigenfunctions for
  elliptic boundary value problems.
\newblock \emph{SIAM Rev.}, 9\penalty0 (4):\penalty0 627--646, 1967.

\bibitem[Clarkson and Woodruff(2009)]{clarkson2009numerical}
K.~L. Clarkson and D.~P. Woodruff.
\newblock Numerical linear algebra in the streaming model.
\newblock In \emph{Proc. Annu. ACM Symp. Theory Comput.}, pages 205--214, 2009.

\bibitem[Colbois and Provenzano(2022)]{colbois2022neumann}
B.~Colbois and L.~Provenzano.
\newblock {Neumann} eigenvalues of the biharmonic operator on domains:
  geometric bounds and related results.
\newblock \emph{The Journal of Geometric Analysis}, 32\penalty0 (8):\penalty0
  218, 2022.

\bibitem[Conway(1985)]{conway2019course}
J.~B. Conway.
\newblock \emph{A course in functional analysis}.
\newblock Springer-Verlag, 1985.

\bibitem[Cybenko(1989)]{cybenko1989approximation}
G.~Cybenko.
\newblock Approximation by superpositions of a sigmoidal function.
\newblock \emph{Math. Control Signals Syst.}, 2\penalty0 (4):\penalty0
  303--314, 1989.

\bibitem[de~Hoop et~al.(2023)de~Hoop, Kovachki, Nelsen, and
  Stuart]{de2021convergence}
M.~V. de~Hoop, N.~B. Kovachki, N.~H. Nelsen, and A.~M. Stuart.
\newblock Convergence rates for learning linear operators from noisy data.
\newblock \emph{SIAM-ASA J. Uncertain. Quantif.}, 11\penalty0 (2):\penalty0
  480--513, 2023.

\bibitem[Driscoll et~al.(2014)Driscoll, Hale, and
  Trefethen]{driscoll2014chebfun}
T.~A. Driscoll, N.~Hale, and L.~N. Trefethen.
\newblock \emph{Chebfun {G}uide}.
\newblock Pafnuty Publications, 2014.
\newblock URL \url{http://www.chebfun.org/docs/guide/}.

\bibitem[Evans(1998)]{evans10}
L.~C. Evans.
\newblock \emph{Partial Differential Equations}.
\newblock American Mathematical Society, 1998.

\bibitem[Fan and Hoffman(1955)]{Fan1955some}
K.~Fan and A.~J. Hoffman.
\newblock Some metric inequalities in the space of matrices.
\newblock \emph{Proc. Am. Math. Soc.}, 6\penalty0 (1):\penalty0 111--116, 1955.

\bibitem[Friz and Robinson(1999)]{Friz1999smooth}
P.~K. Friz and J.~C. Robinson.
\newblock Smooth attractors have zero “thickness”.
\newblock \emph{J. Math. Anal. Appl.}, 240\penalty0 (1):\penalty0 37--46, 1999.

\bibitem[Gittens(2013)]{gittens2013topics}
A.~Gittens.
\newblock \emph{Topics in randomized numerical linear algebra}.
\newblock PhD thesis, California Institute of Technology, 2013.

\bibitem[Goswami et~al.(2022)Goswami, Yin, Yu, and
  Karniadakis]{goswami2022physics}
S.~Goswami, M.~Yin, Y.~Yu, and G.~E. Karniadakis.
\newblock A physics-informed variational deeponet for predicting crack path in
  quasi-brittle materials.
\newblock \emph{Comput. Methods Appl. Mech. Eng.}, 391:\penalty0 114587, 2022.

\bibitem[Halikias and Townsend(2023)]{halikias2022matrix}
D.~Halikias and A.~Townsend.
\newblock Structured matrix recovery from matrix-vector products.
\newblock \emph{Numer. Linear Algebra Appl.}, page e2531, 2023.

\bibitem[Halko et~al.(2011)Halko, Martinsson, and Tropp]{halko2011finding}
N.~Halko, P.-G. Martinsson, and J.~A. Tropp.
\newblock {Finding structure with randomness: Probabilistic algorithms for
  constructing approximate matrix decompositions}.
\newblock \emph{SIAM Rev.}, 53\penalty0 (2):\penalty0 217--288, 2011.

\bibitem[Hernandez et~al.(2005)Hernandez, Roman, and Vidal]{hernandez2005slepc}
V.~Hernandez, J.~E. Roman, and V.~Vidal.
\newblock {SLEPc: A scalable and flexible toolkit for the solution of
  eigenvalue problems}.
\newblock \emph{ACM Trans. Math. Softw.}, 31\penalty0 (3):\penalty0 351--362,
  2005.

\bibitem[Ibort et~al.(2015)Ibort, Lled{\'o}, and
  P{\'e}rez-Pardo]{Ibort2015self}
A.~Ibort, F.~Lled{\'o}, and J.~M. P{\'e}rez-Pardo.
\newblock Self-adjoint extensions of the {L}aplace--{B}eltrami operator and
  unitaries at the boundary.
\newblock \emph{J. Funct. Anal.}, 268\penalty0 (3):\penalty0 634--670, 2015.

\bibitem[Ilias and Makhoul(2010)]{ilias2010universal}
S.~Ilias and O.~Makhoul.
\newblock Universal inequalities for the eigenvalues of a power of the
  {Laplace} operator.
\newblock \emph{manuscripta mathematica}, 132:\penalty0 75--102, 2010.

\bibitem[Karniadakis et~al.(2021)Karniadakis, Kevrekidis, Lu, Perdikaris, Wang,
  and Yang]{karniadakis2021physics}
G.~E. Karniadakis, I.~G. Kevrekidis, L.~Lu, P.~Perdikaris, S.~Wang, and
  L.~Yang.
\newblock Physics-informed machine learning.
\newblock \emph{Nat. Rev. Phys.}, 3\penalty0 (6):\penalty0 422--440, 2021.

\bibitem[Kato(1980)]{Kato1980perturbation}
T.~Kato.
\newblock \emph{Perturbation Theory for Linear Operators}.
\newblock Springer, 1980.

\bibitem[Kim and Sakellaris(2019)]{kim2019green}
S.~Kim and G.~Sakellaris.
\newblock Green's function for second order elliptic equations with singular
  lower order coefficients.
\newblock \emph{Commun. Partial. Differ. Equ.}, 44\penalty0 (3):\penalty0
  228--270, 2019.

\bibitem[Kingma and Ba(2015)]{kingma2015adam}
D.~P. Kingma and J.~Ba.
\newblock Adam: {A} method for stochastic optimization.
\newblock In \emph{Proc. 3rd International Conference on Learning
  Representation}, 2015.

\bibitem[Kovachki et~al.(2021)Kovachki, Lanthaler, and
  Mishra]{kovachki2021universal}
N.~Kovachki, S.~Lanthaler, and S.~Mishra.
\newblock {On universal approximation and error bounds for Fourier Neural
  Operators}.
\newblock \emph{J. Mach. Learn. Res.}, 22:\penalty0 1--76, 2021.

\bibitem[Kovachki et~al.(2023)Kovachki, Li, Liu, Azizzadenesheli, Bhattacharya,
  Stuart, and Anandkumar]{kovachki2023neural}
N.~Kovachki, Z.~Li, B.~Liu, K.~Azizzadenesheli, K.~Bhattacharya, A.~Stuart, and
  A.~Anandkumar.
\newblock {Neural operator: Learning maps between function spaces with
  applications to PDEs}.
\newblock \emph{J. Mach. Learn. Res.}, 24\penalty0 (89):\penalty0 1--97, 2023.

\bibitem[Kuczy{\'n}ski and Wo{\'z}niakowski(1992)]{kuczynski1992estimating}
J.~Kuczy{\'n}ski and H.~Wo{\'z}niakowski.
\newblock {Estimating the largest eigenvalue by the power and Lanczos
  algorithms with a random start}.
\newblock \emph{SIAM J. Matrix Anal. Appl.}, 13\penalty0 (4):\penalty0
  1094--1122, 1992.

\bibitem[Kumar et~al.(2012)Kumar, Mohri, and Talwalkar]{kumar2012sampling}
S.~Kumar, M.~Mohri, and A.~Talwalkar.
\newblock {Sampling methods for the Nystr{\"o}m method}.
\newblock \emph{J. Mach. Learn. Res.}, 13\penalty0 (1):\penalty0 981--1006,
  2012.

\bibitem[Langtangen and Logg(2017)]{langtangen2017solving}
H.~P. Langtangen and A.~Logg.
\newblock \emph{{Solving PDEs in Python: the FEniCS tutorial I}}.
\newblock Springer Nature, 2017.

\bibitem[Lanthaler et~al.(2022)Lanthaler, Mishra, and
  Karniadakis]{lanthaler2021error}
S.~Lanthaler, S.~Mishra, and G.~E. Karniadakis.
\newblock {Error estimates for DeepONets: A deep learning framework in infinite
  dimensions}.
\newblock \emph{Trans. Math. Appl.}, 6\penalty0 (1), 2022.

\bibitem[Lee(2013)]{Lee2013introduction}
J.~M. Lee.
\newblock \emph{Introduction to Smooth Manifolds}.
\newblock Springer New York, 2nd edition, 2013.

\bibitem[Li et~al.(2017)Li, Linderman, Szlam, Stanton, Kluger, and
  Tygert]{li2017algorithm}
H.~Li, G.~C. Linderman, A.~Szlam, K.~P. Stanton, Y.~Kluger, and M.~Tygert.
\newblock {Algorithm 971: An implementation of a randomized algorithm for
  principal component analysis}.
\newblock \emph{ACM Trans. Math. Softw.}, 43\penalty0 (3):\penalty0 1--14,
  2017.

\bibitem[Li et~al.(2021)Li, Kovachki, Azizzadenesheli, Liu, Bhattacharya,
  Stuart, and Anandkumar]{li2020fourier}
Z.~Li, N.~Kovachki, K.~Azizzadenesheli, B.~Liu, K.~Bhattacharya, A.~Stuart, and
  A.~Anandkumar.
\newblock Fourier neural operator for parametric partial differential
  equations.
\newblock In \emph{International Conference on Learning Representations}, 2021.

\bibitem[Lu et~al.(2021)Lu, Jin, Pang, Zhang, and Karniadakis]{lu2021learning}
L.~Lu, P.~Jin, G.~Pang, Z.~Zhang, and G.~E. Karniadakis.
\newblock {Learning nonlinear operators via DeepONet based on the universal
  approximation theorem of operators}.
\newblock \emph{Nat. Mach. Intell.}, 3\penalty0 (3):\penalty0 218--229, 2021.

\bibitem[Lu et~al.(2022)Lu, Meng, Cai, Mao, Goswami, Zhang, and
  Karniadakis]{lu2022comprehensive}
L.~Lu, X.~Meng, S.~Cai, Z.~Mao, S.~Goswami, Z.~Zhang, and G.~E. Karniadakis.
\newblock A comprehensive and fair comparison of two neural operators (with
  practical extensions) based on fair data.
\newblock \emph{Comput. Methods Appl. Mech. Eng.}, 393:\penalty0 114778, 2022.

\bibitem[Martinsson and Tropp(2020)]{martinsson2020randomized}
P.-G. Martinsson and J.~A. Tropp.
\newblock {Randomized numerical linear algebra: Foundations and algorithms}.
\newblock \emph{Acta Numer.}, 29:\penalty0 403--572, 2020.

\bibitem[Minakshisundaram and Pleijel(1949)]{Minakshisundaram1949some}
S.~Minakshisundaram and {\AA}.~Pleijel.
\newblock Some properties of the eigenfunctions of the {L}aplace-operator on
  {R}iemannian manifolds.
\newblock \emph{Canad. J. Math.}, 1\penalty0 (3):\penalty0 242--256, 1949.

\bibitem[Muthukrishnan et~al.(2005)]{muthukrishnan2005data}
S.~Muthukrishnan et~al.
\newblock {Data streams: Algorithms and applications}.
\newblock \emph{Found. Trends Theor. Comput. Sci.}, 1\penalty0 (2):\penalty0
  117--236, 2005.

\bibitem[Nakatsukasa(2020)]{nakatsukasa2020fast}
Y.~Nakatsukasa.
\newblock Fast and stable randomized low-rank matrix approximation.
\newblock \emph{arXiv preprint arXiv:2009.11392}, 2020.

\bibitem[Nystr{\"o}m(1930)]{nystrom1930praktische}
E.~J. Nystr{\"o}m.
\newblock {{\"U}ber die praktische Aufl{\"o}sung von Integralgleichungen mit
  Anwendungen auf Randwertaufgaben}.
\newblock \emph{Acta Math.}, 54:\penalty0 185--204, 1930.

\bibitem[Otto et~al.(2023)Otto, Padovan, and Rowley]{otto2022model}
S.~E. Otto, A.~Padovan, and C.~W. Rowley.
\newblock Model reduction for nonlinear systems by balanced truncation of state
  and gradient covariance.
\newblock \emph{SIAM J. Sci. Comput.}, 45\penalty0 (5):\penalty0 A2325--A2355,
  2023.

\bibitem[Qiu et~al.(2005)Qiu, Zhang, and Li]{Qui2005unitarily}
L.~Qiu, Y.~Zhang, and C.-K. Li.
\newblock Unitarily invariant metrics on the {G}rassmann space.
\newblock \emph{SIAM J. Matrix Anal. Appl.}, 27\penalty0 (2):\penalty0
  507--531, 2005.

\bibitem[Rathgeber et~al.(2016)Rathgeber, Ham, Mitchell, Lange, Luporini,
  McRae, Bercea, Markall, and Kelly]{rathgeber2016firedrake}
F.~Rathgeber, D.~A. Ham, L.~Mitchell, M.~Lange, F.~Luporini, A.~T. McRae, G.-T.
  Bercea, G.~R. Markall, and P.~H. Kelly.
\newblock Firedrake: automating the finite element method by composing
  abstractions.
\newblock \emph{ACM Trans. Math. Softw.}, 43\penalty0 (3):\penalty0 1--27,
  2016.

\bibitem[Reed and Simon(1975)]{Reed1975Fourier}
M.~Reed and B.~Simon.
\newblock \emph{Methods of modern mathematical physics, Volume {II}: {F}ourier
  Analysis, Self-Adjointness}.
\newblock Academic Press, 1975.

\bibitem[Reed and Simon(1980)]{Reed1980functional}
M.~Reed and B.~Simon.
\newblock \emph{Methods of modern mathematical physics, Volume {I}: Functional
  Analysis}.
\newblock Academic Press, {Revised and enlarged} edition, 1980.

\bibitem[Renardy and Rogers(2004)]{Renardy2004introduction}
M.~Renardy and R.~C. Rogers.
\newblock \emph{An Introduction to Partial Differential Equations}.
\newblock Springer, 2nd edition, 2004.

\bibitem[Robinson(2008)]{Robinson2008topological}
J.~Robinson.
\newblock A topological time-delay embedding theorem for infinite-dimensional
  cocycle dynamical systems.
\newblock \emph{Discrete Continuous Dyn. Syst. Ser. B}, 9:\penalty0 731--741,
  2008.

\bibitem[Ronneberger et~al.(2015)Ronneberger, Fischer, and
  Brox]{ronneberger2015u}
O.~Ronneberger, P.~Fischer, and T.~Brox.
\newblock {U-NET: Convolutional networks for biomedical image segmentation}.
\newblock In \emph{Proc. Medical Image Computing and Computer-Assisted
  Intervention}, pages 234--241. Springer, 2015.

\bibitem[Sch{\"a}fer and Owhadi(2021)]{schafer2021sparse}
F.~Sch{\"a}fer and H.~Owhadi.
\newblock Sparse recovery of elliptic solvers from matrix-vector products.
\newblock \emph{arXiv preprint arXiv:2110.05351}, 2021.

\bibitem[Schwab et~al.(2023)Schwab, Stein, and Zech]{schwab2023deep}
C.~Schwab, A.~Stein, and J.~Zech.
\newblock {Deep Operator Network Approximation Rates for Lipschitz Operators}.
\newblock \emph{arXiv preprint arXiv:2307.09835}, 2023.

\bibitem[Taylor(2011)]{Taylor2011PDEI}
M.~E. Taylor.
\newblock \emph{Partial Differential Equations {I}: Basic Theory}.
\newblock Springer, 2nd edition, 2011.

\bibitem[Tropp(2011)]{tropp2011improved}
J.~A. Tropp.
\newblock Improved analysis of the subsampled randomized hadamard transform.
\newblock \emph{Adv. Adapt. Data Anal.}, 3\penalty0 (01n02):\penalty0 115--126,
  2011.

\bibitem[Tropp et~al.(2017)Tropp, Yurtsever, Udell, and
  Cevher]{tropp2017practical}
J.~A. Tropp, A.~Yurtsever, M.~Udell, and V.~Cevher.
\newblock Practical sketching algorithms for low-rank matrix approximation.
\newblock \emph{SIAM J. Matrix Anal. Appl.}, 38\penalty0 (4):\penalty0
  1454--1485, 2017.

\bibitem[Udell and Townsend(2019)]{townsend2019bigdata}
M.~Udell and A.~Townsend.
\newblock Why are big data matrices approximately low rank?
\newblock \emph{SIAM J. Math. Data Sci.}, 1\penalty0 (1):\penalty0 144--160,
  2019.

\bibitem[Wang et~al.(2021)Wang, Wang, and Perdikaris]{wang2021learning}
S.~Wang, H.~Wang, and P.~Perdikaris.
\newblock {Learning the solution operator of parametric partial differential
  equations with physics-informed DeepONets}.
\newblock \emph{Sci. Adv.}, 7\penalty0 (40):\penalty0 eabi8605, 2021.

\bibitem[Wedin(1972)]{wedin1972perturbation}
P.-{\AA}. Wedin.
\newblock Perturbation bounds in connection with singular value decomposition.
\newblock \emph{BIT Numer. Math.}, 12:\penalty0 99--111, 1972.

\bibitem[Wen et~al.(2022)Wen, Li, Azizzadenesheli, Anandkumar, and
  Benson]{wen2022u}
G.~Wen, Z.~Li, K.~Azizzadenesheli, A.~Anandkumar, and S.~M. Benson.
\newblock {U-FNO—An enhanced Fourier neural operator-based deep-learning
  model for multiphase flow}.
\newblock \emph{Adv. Water Resour.}, 163:\penalty0 104180, 2022.

\bibitem[Weyl(1911)]{Weyl1911asymptotische}
H.~Weyl.
\newblock {\"U}ber die asymptotische verteilung der eigenwerte.
\newblock \emph{Nachr. Ges. Wiss. G\"ottingen, Math.-Phys. Kl.}, 1911:\penalty0
  110--117, 1911.

\bibitem[Williams and Seeger(2000)]{williams2000using}
C.~Williams and M.~Seeger.
\newblock {Using the Nystr{\"o}m method to speed up kernel machines}.
\newblock In \emph{Advances in Neural Information Processing Systems},
  volume~13, 2000.

\bibitem[Woodruff(2014)]{woodruff2014sketching}
D.~P. Woodruff.
\newblock Sketching as a tool for numerical linear algebra.
\newblock \emph{Found. Trends Theor. Comput. Sci.}, 10\penalty0
  (1--2):\penalty0 1--157, 2014.

\bibitem[Woolfe et~al.(2008)Woolfe, Liberty, Rokhlin, and
  Tygert]{woolfe2008fast}
F.~Woolfe, E.~Liberty, V.~Rokhlin, and M.~Tygert.
\newblock A fast randomized algorithm for the approximation of matrices.
\newblock \emph{Appl. Comput. Harmon. Anal.}, 25\penalty0 (3):\penalty0
  335--366, 2008.

\end{thebibliography}

\appendix

\section{Proofs of Section~\ref{sec:FourierSampling}}
\label{app:Fourier_proj_proofs}

This section contains the proofs of the results exposed in \cref{sec:manifold_without_boundary_case,sec:domain_with_boundary_case}.

\begin{replemma}{lem:LAstar_norm}
    The bounded operator $(L A^*)^*$ extends the operator $A L : D(L) \subset \H \to \H'$ that is densely-defined. Let $\{\tilde{P}_n\}_{n=1}^{\infty}$ be a sequence of orthogonal projections in $\H$ with $\Range(\tilde{P}_n) \subset D(L)$ and $\tilde{P}_n$ converging strongly to the identity.
    Then, we have
    \begin{equation} \tag{\ref{eqn:norm_of_LAstar}}
        \| L A^* \|_{\mcal{H}'\to\mcal{H}}
        = \sup_{\substack{f\in D(L)\\ \| f\|_{\H} \leq 1}} \| A L f \|_{\mcal{H}'}
        = \lim_{n\to\infty} \| A L \tilde{P}_n \|_{\mcal{H}\to\mcal{H}'}.
    \end{equation}
\end{replemma}
\begin{proof}
    Since $L A^*$ is bounded, its adjoint $(L A^*)^*$ is also bounded.
    Choosing $f\in D(L)$ and $v \in \H'$ we have,
    \[
        \big\langle (L A^*)^* f, \ v \big\rangle_{\H'} = \big\langle f, \ L A^* v \big\rangle_{\H} = \big\langle L f, \ A^*v \big\rangle_{\H} = \big\langle A L f, \ v \big\rangle_{\H'}.
    \]
    Since this holds for all $v \in \H'$, we obtain $(L A^*)^* f = A L f$ for all $ f \in D(L)$, which proves the first claim. We now show that \cref{eqn:norm_of_LAstar} holds. Since $D(L)$ is dense in $\H$, we have,
    \[
        \| L A^* \|
        = \| (L A^*)^* \|
        = \sup_{\substack{f\in \H\\ \| f\|_{\H} \leq 1}} \| (L A^*)^* f \|
        = \sup_{\substack{f\in D(L)\\ \| f\|_{\H} \leq 1}} \| A L f \|,
    \]
    The second equality in~\cref{eqn:norm_of_LAstar} follows from the observation that
    \[
        \| A L \tilde{P}_n \|
        = \| (L A^*)^* \tilde{P}_n \|
        = \| \tilde{P}_n  L A^* \|
        \leq \| L A^* \|.
    \]
    Let $\epsilon > 0$, we select $v\in\H'$ satisfying $\| v \| \leq 1$ and $\| L A^* \| \leq \| L A^* v \| + \epsilon$.
    Since $\tilde{P}_n L A^* v \to L A^* v$ as $n\to\infty$, there exists $N_{\epsilon}\geq 1$ such that for $n \geq N_{\epsilon}$, we have
    \[
        \| L A^* \|
        \leq \| \tilde{P}_n L A^* v \| + 2\epsilon
        \leq \| \tilde{P}_n L A^* \| + 2\epsilon
        = \| A L \tilde{P}_n \| + 2 \epsilon.
    \]
    Therefore, $\| A L \tilde{P}_n \| \to \| L A^* \|$ as $n \to \infty$.
\end{proof}

Before proving \cref{thm:manifold_without_boundary_case}, we require some technical results about the Laplace--Beltrami operator on a smooth, compact Riemannian manifold without boundary. First, we establish the Green's identity in \cref{eqn:Green_without_boundary} for the classical LBO in \cref{eqn:classical_LBO} on a (possibly non-orientable) Riemannian manifold by observing that the following equation,
\[
    \div (f_1 \grad f_2)
    = \langle \grad f_1, \grad f_2 \rangle_g - f_1 \Delta_g f_2,
\]
holds for every $f_1, f_2 \in C^{\infty}(\M)$ following \citep[Prob.~16-12(a)]{Lee2013introduction}. Integrating over $\M$ with respect to the Riemannian density $\d \mu_g$ and applying the divergence theorem for non-orientable Riemannian manifolds given by \citep[Thm.~16.48]{Lee2013introduction} yields \cref{eqn:Green_without_boundary}. Green's identity for the classical LBO yields a closable nonnegative quadratic form
\begin{equation}\label{eqn:closable_form_for_LBO}
    \tilde{q} (f_1, f_2) = \int_{\M} \langle \grad f_1, \grad f_2 \rangle_g \d \mu_g,
    \quad f_1, f_2 \in Q(\tilde{q}) = C^{\infty}(\M).
\end{equation}
Its closure $q:Q(q)\times Q(q) \subset L^2(\M)\times L^2(\M) \to \R$ corresponds to a self-adjoint extension $\Delta:D(\Delta)\subset L^2(\M) \to L^2(\M)$ of the classical LBO called the Friedrichs extension~\citep[Thm.~X.23]{Reed1975Fourier}. The Friedrichs extension $\Delta$ is the only self-adjoint extension of $\Delta_g$ whose domain is contained in the form domain $Q(q)$. The following lemma characterizes the form domain. We note that \cref{lem:boundaryless_form_domain,lem:boundaryless_LBO_domain} might already exist in the literature (though we cannot find precise references).

\begin{lemma}
    \label{lem:boundaryless_form_domain}
    Let $\M$ be a smooth, compact Riemannian manifold without boundary.
    Let $q$ be the closure of the quadratic form $\tilde{q}$ defined by \cref{eqn:closable_form_for_LBO} on the domain $Q(\tilde{q}) = C^{\infty}(\M)$. Then, $Q(q) = H^1(\M)$.
\end{lemma}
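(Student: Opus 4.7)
The plan is to identify $Q(q)$ with the completion of $C^{\infty}(\M)$ with respect to the form norm
\[
\|f\|_{q}^2 := \|f\|_{L^2(\M)}^2 + \tilde q(f, f) = \|f\|_{L^2(\M)}^2 + \|\grad f\|_{L^2(\M)}^2,
\]
and then observe that on smooth functions this coincides with the Sobolev $H^1(\M)$ norm (once $H^1(\M)$ is set up via Riemannian gradients or, equivalently, via weak derivatives in coordinate charts).

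For the inclusion $Q(q) \subseteq H^1(\M)$, I would take $f \in Q(q)$ together with a defining sequence $\{f_n\} \subset C^{\infty}(\M)$ such that $f_n \to f$ in $L^2(\M)$ and $\{f_n\}$ is Cauchy with respect to $\|\cdot\|_q$. On smooth functions one has $\|f_n - f_m\|_q = \|f_n - f_m\|_{H^1(\M)}$, so $\{f_n\}$ is Cauchy in $H^1(\M)$. By completeness of $H^1(\M)$, there is a limit $\tilde f \in H^1(\M)$, and by uniqueness of $L^2$ limits $\tilde f = f$, giving $f \in H^1(\M)$.

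For the reverse inclusion $H^1(\M) \subseteq Q(q)$, the essential ingredient is density of $C^{\infty}(\M)$ in $H^1(\M)$. Since $\M$ is compact and boundaryless, I would obtain this by the standard manifold argument: pick a finite atlas of coordinate charts covering $\M$, a subordinate smooth partition of unity $\{\chi_\alpha\}$, and for $f \in H^1(\M)$ decompose $f = \sum_\alpha \chi_\alpha f$. Each piece $\chi_\alpha f$ has compact support inside a chart, so Friedrichs mollification in Euclidean coordinates produces smooth approximants converging to $\chi_\alpha f$ in the chart-based $H^1$ norm. Summing over the finite atlas yields a sequence in $C^{\infty}(\M)$ converging to $f$ in $H^1(\M)$; this sequence is Cauchy in the form norm and converges to $f$ in $L^2(\M)$, so $f \in Q(q)$.

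The main (minor) obstacle is reconciling the intrinsic Riemannian form $\|\grad f\|_{L^2(\M)}$ with the Euclidean-chart derivatives used in the mollification argument. This is handled by noting that on the compact manifold $\M$ the metric components $g_{ij}$ and their inverses $g^{ij}$ are smooth and uniformly bounded above and below on each chart of a finite atlas, so the intrinsic and chart-based $H^1$ norms are equivalent with constants depending only on the atlas. Once this equivalence is in place, the density argument and the completion identification go through routinely, yielding $Q(q) = H^1(\M)$.
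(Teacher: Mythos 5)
Your proposal is correct and takes essentially the same route as the paper: both identify the form norm with the $H^1(\M)$ norm on smooth functions and invoke density of $C^\infty(\M)$ in $H^1(\M)$ to conclude that the form closure is all of $H^1(\M)$. The only difference is that the paper cites the density result (Taylor, Sec.~4.3--4.4) rather than re-deriving it via partition of unity and mollification, and phrases the conclusion in the ``form core'' language of Reed--Simon.
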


\begin{proof}
    Recall that $C^{\infty}(\M)$ is dense in $H^1(\M)$~\citep[Sec.~4.3-4.4]{Taylor2011PDEI}, which has norm
    \[
        \| f \|_{H^1}^2
        = \| f \|_{L^2}^2 + \| \grad f \|_{L^2}^2
        = \| f \|_{L^2}^2 + \tilde{q}(f,f).
    \]
    Therefore $C^{\infty}(\M)$ is a form core \citep[p.~277]{Reed1980functional} for $q$ and $Q(q) = H^1(\M)$.
\end{proof}

The following lemma characterizes the domains of some relevant powers of the LBO.

\begin{lemma} \label{lem:boundaryless_LBO_domain}
    Let $\M$ be a smooth, compact Riemannian manifold without boundary and let $\Delta:D(\Delta)\subset L^2(\M) \to L^2(\M)$ be the Friedrichs extension of the classical LBO. Then $\Delta = \overline{\Delta_g}$ and $D(\Delta^{p/2}) = H^p(\M)$ for every $p\geq 1$.
\end{lemma}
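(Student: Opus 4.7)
My plan is to establish both claims by combining a form-domain argument with global elliptic regularity, and to handle the odd-order case via complex interpolation between the even endpoints.

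I would first show $D(\Delta) = H^2(\M)$. By construction of the Friedrichs extension, $u \in D(\Delta)$ iff $u \in Q(q) = H^1(\M)$ (by \cref{lem:boundaryless_form_domain}) and the distributional Laplacian $\Delta_g u$ lies in $L^2(\M)$. Since $\Delta_g$ is a smooth second-order elliptic operator and $\M$ is compact without boundary, global elliptic regularity upgrades this to $u \in H^2(\M)$; the reverse inclusion follows from Green's identity \cref{eqn:Green_without_boundary} applied to smooth test functions, combined with density of $C^\infty(\M)$ in $H^1(\M)$. The identity $\Delta = \overline{\Delta_g}$ then reduces to showing $C^\infty(\M)$ is a core for $\Delta$, which is immediate: the graph norm of $\Delta$ is dominated by the $H^2$-norm, and $C^\infty(\M)$ is $H^2$-dense via a partition of unity plus coordinate-chart mollification argument, so smooth approximants converge in the graph norm.

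The case $p = 1$ of the second claim is immediate from $D(\Delta^{1/2}) = Q(q) = H^1(\M)$ by the spectral theorem and \cref{lem:boundaryless_form_domain}. For even $p = 2m$ with $m \geq 1$, I would proceed by induction: assuming $D(\Delta^{m-1}) = H^{2(m-1)}(\M)$, a function $u$ lies in $D(\Delta^m)$ iff $u \in D(\Delta) = H^2(\M)$ and $\Delta u \in H^{2(m-1)}(\M)$; iterating the elliptic regularity bootstrap upgrades $u$ to $H^{2m}(\M)$, and the converse follows from boundedness of $\Delta_g : H^{2m}(\M) \to H^{2(m-1)}(\M)$.

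The main obstacle is the odd case $p = 2m+1$, where $\Delta^{p/2}$ is a genuinely non-differential fractional power not directly accessible by iterated elliptic regularity. My plan is to obtain this case by complex interpolation between the even endpoints already handled. By the Heinz--Kato theorem applied to the nonnegative self-adjoint operator $\Delta$, the domains of fractional powers satisfy $D(\Delta^{m+1/2}) = [D(\Delta^m), D(\Delta^{m+1})]_{1/2} = [H^{2m}(\M), H^{2m+2}(\M)]_{1/2}$, and the classical identification of complex-interpolated Sobolev spaces on smooth, compact, boundaryless manifolds, $[H^{s}(\M), H^{t}(\M)]_{\theta} = H^{(1-\theta)s + \theta t}(\M)$ \citep[Ch.~4]{Taylor2011PDEI}, then yields $D(\Delta^{(2m+1)/2}) = H^{2m+1}(\M)$, completing the proof. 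An alternative route, should interpolation constants be unwieldy, is to appeal directly to the pseudodifferential characterization $(1+\Delta)^{p/2} \in \mathrm{OPS}^p(\M)$, which gives the equivalence of $\|(1+\Delta)^{p/2} u\|_{L^2}$ and $\|u\|_{H^p(\M)}$ for all real $p \geq 0$.
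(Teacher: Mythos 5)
Your proposal is correct, and the even-power and $p=1$ parts run essentially parallel to the paper's proof (form domain $= H^1$, elliptic regularity to get $D(\Delta) = H^2$, induction using the bootstrap). The substantive difference is in the odd case $p = 2m+1$. You reach it via complex interpolation — the Heinz–Kato/spectral-theorem identification $D(\Delta^{m+1/2}) = [D(\Delta^m), D(\Delta^{m+1})]_{1/2}$ together with $[H^{2m}(\M), H^{2m+2}(\M)]_{1/2} = H^{2m+1}(\M)$ — or alternatively Seeley's pseudodifferential characterization of $(1+\Delta)^{p/2}$. The paper instead handles odd powers inside the same elementary induction: for the forward inclusion it observes $\Delta^q(H^{2q+1}(\M)) \subset H^1(\M) = D(\Delta^{1/2})$, hence $H^{2q+1}(\M) \subset D(\Delta^{q+1/2})$; for the reverse it notes that $f \in D(\Delta^{q+1/2})$ gives $(I+\Delta)f \in D(\Delta^{q-1/2}) = H^{2q-1}(\M)$, so elliptic regularity of $(I+\Delta_q)^{-1}$ upgrades $f$ to $H^{2q+1}(\M)$. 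Your route invokes heavier machinery but buys a cleaner statement — it yields $D(\Delta^{s}) = H^{2s}(\M)$ for all real $s \geq 0$, not just half-integers — whereas the paper's argument is self-contained and uses nothing beyond elliptic regularity and density, at the cost of only covering $p \in \mathbb{Z}_{\geq 1}$. Both are valid proofs of the lemma as stated.
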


\begin{proof}
    Following the discussion in \citep[p.~277-278]{Reed1980functional}, the form domain $Q(q)$ coincides with the domain $D(\Delta^{1/2})$ given by the spectral theorem in multiplication operator form~\citep[Thm.~VIII.4]{Reed1980functional}. Together with \cref{lem:boundaryless_form_domain}, this proves the case $p=1$ of \cref{lem:boundaryless_LBO_domain}.

    Let $k$ be a nonnegative integer and $f \in H^{k+2}(\M)$. By density, there exists a sequence of functions $\{ f_n \}_{n=1}^{\infty} \subset C^{\infty}(\M)$ such that $f_n \to f$ in $H^{k+2}(\M)$. Then, using the classical LBO, we have,
    \[
        \| f_n - f_m \|_{H^k}^2 + \| \Delta_g(f_n - f_m) \|_{H^k}^2 \leq \| f_n - f_m \|_{H^{k+1}}^2 \to 0,
    \]
    as $m,n \to \infty$.
    In particular, this shows that $f \in \D(\overline{\Delta_g}) \subset D(\Delta)$ and $\Delta_g f_n \to \Delta f$ in $H^k(\M)$.
    Therefore, $H^{k+2}(\M) \subset \D(\overline{\Delta_g}) \subset D(\Delta)$ and
    \[
        \| f \|_{H^k}^2 + \| \overline{\Delta_g} f \|_{H^k}^2 \leq \| f \|_{H^{k+2}}^2, \quad f\in H^{k+2}(\M).
    \]
    By induction, for every $q\geq 1$, $H^{2q}(\M) \subset D\big((\overline{\Delta_g})^q\big) \subset D(\Delta^q)$ and
    \[
        \| f \|_{H^k}^2 + \| (\overline{\Delta_g})^q f \|_{H^k}^2 \leq \| f \|_{H^{k+2q}}^2, \quad f\in H^{k+2q}(\M).
    \]
    In particular, taking $k=1$, we have $\Delta^q\big( H^{2q + 1}(\M) \big) \subset H^1(\M) = D(\Delta^{1/2})$. Hence, $H^{2q+1}(\M) \subset D(\Delta^{q + 1/2})$, and $H^{p}(\M) \subset D(\Delta^{p/2})$ for every $p\geq 1$.

    The reverse inclusions are proved using elliptic regularity. The quadratic form $q$ may be viewed as corresponding to an operator $\Delta_q : H^1(\M) \to H^{-1}(\M)$, where $H^{-1}(\M)$ is the dual of $H^1(\M)$ as a subspace of $L^2(\M)$.
    By definition, $\Delta_q$ agrees with $\Delta$ on $D(\Delta)$. Following the elliptic regularity of $\Delta_q$ on compact manifolds without boundary~\citep[p.362-363]{Taylor2011PDEI}, for every $s \geq -1$ the operator $(I + \Delta_q)^{-1}$ is bounded from $H^{s}(\M)$ to $H^{s+2}(\M)$. Taking $f \in D(\Delta)$, we have $(I + \Delta) f \in L^2$ and it follows that $f = (I + \Delta_q)^{-1} (I + \Delta) f \in H^2(\M)$, meaning that $D(\Delta) \subset H^2(\M)$.
    Combining with the earlier result that $H^2(\M) \subset D(\overline{\Delta_g}) \subset D(\Delta)$, we obtain
    \[
        D(\Delta) = D(\overline{\Delta_g}) = H^2(\M),
    \]
    which proves the case $p=2$ of \cref{lem:boundaryless_LBO_domain}.
    Since $\Delta$ is an extension of  $\overline{\Delta_g}$, we have $\Delta = \overline{\Delta_g}$.

    Now, suppose that $D(\Delta^q) = H^{2q}(\M)$ for a positive integer $q$.
    Choosing $f \in D(\Delta^{q+1})$ yields $(I + \Delta) f \in D(\Delta^q) = H^{2q}(\M)$. By elliptic regularity, $f = (I + \Delta_q)^{-1} (I + \Delta) f \in H^{2q + 2}(\M)$.
    Therefore, $D(\Delta^q) = H^{2q}(\M)$ for every $q \geq 1$ by induction, which shows that \cref{lem:boundaryless_LBO_domain} holds for even $p$. Considering the odd $p$ case, we suppose that $D(\Delta^{q-1/2}) = H^{2q-1}(\M)$ for a positive integer $q$. Then, choosing $f \in D(\Delta^{q+1/2})$ yields $(I + \Delta) f \in D(\Delta^{q-1/2}) = H^{2q-1}(\M)$. Moreover, by elliptic regularity, we have $f = (I + \Delta_q)^{-1} (I + \Delta) f \in H^{2q + 1}(\M)$.
    Therefore, $D(\Delta^{q-1/2}) = H^{2q-1}(\M)$ for every $q \geq 1$ by induction.
\end{proof}

Using the above results, we can prove \cref{thm:manifold_without_boundary_case}.
\begin{reptheorem}{thm:manifold_without_boundary_case}
    Let $\M$ be a smooth, compact Riemannian manifold without boundary.
    Then $L^2(\M)$ admits an orthonormal basis consisting of eigenfunctions $\{ \varphi_j \}_{j=1}^{\infty}$ of $\Delta = \overline{\Delta_g}$ with eigenvalues $0 \leq \lambda_1 \leq \lambda_2 \leq \cdots$, and $\lambda_n\to\infty$.
    Let $P_n:L^2(\M) \to L^2(\M)$ denote the orthogonal projection onto $\vspan\{ \varphi_j \}_{j=1}^n$.
    If $\lambda_{n+1} \neq 0$ then for every integer $k\geq 1$ we have
    \begin{equation} \tag{\ref{eqn:function_approx_boundaryless_case}}
        \| f - P_n f \|_{L^2(\mcal{M})}
        \leq \frac{1}{\lambda_{n+1}^{k/2}} \| \Delta^{k/2} f \|_{L^2(\mcal{M})},
        \quad f \in H^k(\M),
    \end{equation}
    and equality is achieved by $f = \varphi_{n+1}$.
    Every bounded operator $A:L^2(\M) \to \H'$ with $\Range(A^*) \subset H^k(\M)$ satisfies $\| \Delta^{k/2} A^* \|_{\mcal{H}' \to L^2(\mcal{M})} < \infty$ and
    \[
        \| A - A P_n \|_{L^2(\mcal{M}) \to \mcal{H}'}
        \leq \frac{1}{\lambda_{n+1}^{k/2}} \| \Delta^{k/2} A^* \|_{\mcal{H}' \to L^2(\mcal{M})},
    \]
    where equality is achieved by the operator $A = (\Delta^{k/2})^{\dagger}$.
\end{reptheorem}
\begin{proof}
    The domain of $\Delta^{k/2}$ is characterized by \cref{lem:boundaryless_LBO_domain}. The stated results are then obtained by directly applying \cref{lem:function_projection} and \cref{thm:operator_projection} with the self-adjoint operator $L = \Delta^{k/2}$.
\end{proof}

\begin{replemma}{lem:self_adjoint_BCs_for_LBO}
    Let $L = \left.\Delta_g^m\right|_{D''} : D(L) = D'' \subset L^2(\mcal{M}) \to L^2(\mcal{M})$ denote the restriction of $\Delta_g^m$ to $D''$, where $\Delta_g^m$ is understood as a differential operator acting on distributions. Suppose that 
    \begin{equation}\tag{\ref{eqn:LBO_power_symmetry_condition}}
        \langle \Delta_g^m u, \ v \rangle_{L^2(\Omega)} = \langle u, \ \Delta_g^m v \rangle_{L^2(\Omega)}, \quad \forall u,v \in D''.
    \end{equation}
    holds and 
    \begin{equation}\tag{\ref{eqn:BVP_for_LBO_power}}
        \mbox{$c u + \Delta_g^m u = f$ in $\Omega$},
        \quad \mbox{and} \quad
        \mbox{$\mcal{B}''_i(u) = 0$, $1\leq i\leq l''$ on $\partial \Omega$}
    \end{equation}
    has a solution $u \in H^{2m}(\Omega)$ satisfying an estimate in the form of
    \begin{equation}\tag{\ref{eqn:global_regularity}}
        \| u \|_{H^{k+s}(\Omega)} \leq C(\Omega, \mcal{L}, s, \sigma) \left( \| f \|_{H^s(\Omega)} + \| u \|_{H^{\sigma}(\Omega)} \right), \quad \forall \sigma < k + s.
    \end{equation}
    for each $f \in L^2(\Omega)$. Then $L$ is self-adjoint and $D(L) = D''$, endowed with the graph norm, is compactly embedded in $L^2(\Omega)$.
\end{replemma}
\begin{proof}
    Since $L = \left.\Delta_g^m\right|_{D''} : D'' \subset L^2(\Omega) \to L^2(\Omega)$ is a differential operator of order $k = 2m$, then it is well-defined. Moreover, the symmetry condition~\eqref{eqn:LBO_power_symmetry_condition} ensures that $D(L) := D'' \subset D(L^*)$. It remains to show that $D(L^*) \subset D''$ to prove that $L$ is self-adjoint. Let $v \in D(L^*)$ and $f \in L^2(\Omega)$.
    Following the assumption that \cref{eqn:BVP_for_LBO_power} has regular solutions, there exist $u\in D''$ satisfying
    \[
        f = c u + L u.
    \]
    Since $v\in D(L^*)$ and $L^*:D(L^*)\subset L^2\to L^2$, then $g\coloneqq c v + L^* v\in L^2$. Therefore, by \cref{eqn:BVP_for_LBO_power}, there exists $\tilde{v}\in H^{2m}\subset L^2$ such that $L\tilde{v}+c\tilde{v}=g$, which implies that
    \[    c v + L^* v = c \tilde{v} + L \tilde{v}.\]
    After multiplying the two equalities above by $v$ and $u$, we obtain
    \[
        \langle f,v \rangle = \langle c u + L u, v \rangle = \langle u, c v + L^* v \rangle = \langle u, c \tilde{v} + L \tilde{v} \rangle = \langle c u + L u, \tilde{v} \rangle
        = \langle f, \tilde{v} \rangle,
    \]
    where the second equality is by definition of the adjoint $L^*$ and the fourth equality follows from the symmetry condition~\eqref{eqn:LBO_power_symmetry_condition}. Therefore, $v = \tilde{v} \in D''$, which implies that $D(L^*) \subset D''$, and shows that $L$ is self-adjoint. The compactness of $D(L) = D''$ in $L^2(\Omega)$ follows from the global regularity estimate~\eqref{eqn:global_regularity}:
    \[
        \| u \|_{H^{2m}(\Omega)}
        \leq C \left( \| L u \|_{L^2(\Omega)} + \| u \|_{L^2(\Omega)} \right)
        \leq C' \| u \|_{H^{2m}(\Omega)},
    \]
    which shows that the graph norm on $D(L)$ is equivalent to the $H^{2m}(\Omega)$-norm. Finally, the Rellich--Kondrachov theorem~\cite[Thm.~6.3]{adams2003sobolev} states that $H^{2m}(\Omega)$, and thus $D(L)$, is compactly embedded in $L^2(\Omega)$, which concludes the proof.
\end{proof}

\begin{reptheorem}{thm:domain_with_boundary_case}
    Let $\Omega \subset \M$ be a domain satisfying the modified $k$-extension property, and $P_n: L^2(\Omega) \to L^2(\Omega)$ denote the orthogonal projection onto $\vspan\{ \left. \varphi_j\right|_{\Omega} \}_{j=1}^n$. There exists a constant $C(\Omega, \M, k)$, depending only on $\Omega$, $\M$, and $k$, so that for every $n\geq 1$ with $\lambda_{n+1}\neq 0$, we have
    \begin{equation}
        \| f - P_n f \|_{L^2(\Omega)}
        \leq \frac{C(\Omega, \M, k)}{\lambda_{n+1}^{k/2}} \| f \|_{H^k(\Omega)},
        \quad f\in H^k(\Omega).
        \tag{\ref{eqn:extension_based_function_approx_with_boundary}}
    \end{equation}
    Moreover, every bounded operator $A:L^2(\Omega) \to \H'$ into a Hilbert space $\H'$ with $\Range(A^*) \subset H^k(\Omega)$ satisfies $\| A^* \|_{\H' \to H^k(\Omega)} < \infty$ and
    \[
        \| A - A P_n \|_{L^2(\mcal{M})\to\mcal{H}'} \leq \frac{C(\Omega, \M, k)}{\lambda_{n+1}^{k/2}}\| A^* \|_{\H' \to H^k(\Omega)}.
    \]
\end{reptheorem}
\begin{proof}
    Let $E:L^2(\Omega) \to L^2(\M)$ be an extension operator satisfying the conditions in \cref{def:modified_p_extension}
    and $R: L^2(\mcal{M}) \to L^2(\Omega)$ be the restriction operator defined as $f \mapsto \left. f \right\vert_{\Omega}$.
    Let $\tilde{P}_n:L^2(\M) \to L^2(\M)$ be the orthogonal projection onto $\vspan\{ \varphi_j \}_{j=1}^n$ and $f \in H^k(\Omega)$. Since $\Range(R \tilde{P}_n E) \subset \vspan\{ \left. \varphi_j\right|_{\Omega} \}_{j=1}^n$, it follows from the projection theorem~\cite[Cor.~5.4 \& Thm.~5.2]{Brezis2010functional} that
    \[
        \| f - P_n f \|_{L^2(\Omega)}
        = \min_{g \in \vspan\{ \left. \varphi_j \right|_{\Omega} \}_{j=1}^n} \| f - g \|_{L^2(\Omega)}
        \leq \| f - R \tilde{P}_n E f \|_{L^2(\Omega)}.
    \]
    We then obtain
    \[
        \| f - R \tilde{P}_n E f \|_{L^2(\Omega)}
        = \| R E f - R \tilde{P}_n E f \|_{L^2(\Omega)} \leq \| E f - \tilde{P}_n E f \|_{L^2(\M)} \leq \frac{1}{\lambda_{n+1}^{k/2}}\| \Delta^{k/2} E f\|_{L^2(\M)},
    \]
    thanks to \cref{thm:manifold_without_boundary_case}.
    Moreover, by \cref{lem:boundaryless_LBO_domain}, the graph norm on $D(\Delta^{k/2})$ is equivalent to the $H^k(\M)$-norm. Hence, there exists a constant $C'(\M,k)$, depending only on $\M$ and $k$, such that
    \[
        \| f - R \tilde{P}_n E f \|_{L^2(\Omega)} \leq \frac{C'(\M,k)}{\lambda_{n+1}^{k/2}} \| E f \|_{H^k(\M)}.
    \]
    Finally, by definition of the modified $k$-extension operator $E$, there exists a constant $C''(\Omega, \M, k)$, depending only on $\Omega$, $\M$, and $k$, such that $\| E f \|_{H^k(\M)} \leq C''(\Omega, \M, k) \| f \|_{H^k(\Omega)}$, meaning that \cref{eqn:extension_based_function_approx_with_boundary} holds with $C(\Omega, \M, k) = C'(\M, k) C''(\Omega, \M, k)$.

    Applying \cref{thm:operator_projection} to the operator $A E^*$ with the operator $L = \Delta^{k/2}$ shows that $\| \Delta^{k/2} E A^* \|_{\mcal{H}'\to L^2(\mcal{M})} < \infty$.
    Using the fact that the graph norm on $D(\Delta^{k/2})$ is equivalent to the $H^k(\M)$ norm again, there exists a constant $C'''(\M,k)$ such that
    \[
        \| f \|_{H^k(\M)} \leq C'''(\M,k) \big(\| \Delta^{k/2} f \|_{L^2(\M)} + \| f \|_{L^2(\M)}\big),
        \quad \forall f \in H^k(\M).
    \]
    Moreover, because $\| f \|_{H^k(\Omega)} \leq \| E f \|_{H^k(\M)}$, for every $f \in H^k(\Omega)$, we obtain
    \begin{align*}
        \| A^* \|_{\H' \to H^k(\Omega)}
         & = \sup_{v\in\H',\ \| v \| \leq 1} \| A^* v \|_{H^k(\Omega)} \leq \sup_{v\in\H',\ \| v \| \leq 1} \| E A^* v \|_{H^k(\M)}             \\
         & \leq \sup_{v\in\H',\ \| v \| \leq 1} C'''(\M,k) \big(\| \Delta^{k/2} E A^* v \|_{L^2(\M)} + \| E A^* v \|_{L^2(\M)}\big)             \\
         & \leq C'''(\M,k) \big(\| \Delta^{k/2} E A^* \|_{\mcal{H}'\to L^2(\mcal{M})} + \| E A^* \|_{\mcal{H}'\to L^2(\mcal{M})}\big) < \infty.
    \end{align*}
    Finally, using \cref{eqn:extension_based_function_approx_with_boundary}, we compute
    \begin{align*}
        \| A - A P_n \|_{L^2(\mcal{M})\to\mcal{H}'}
         & = \sup_{v\in\H',\ \| v \| \leq 1} \| A^*v - P_n A^*v \|_{L^2(\Omega)}                                                                                                                      \\
         & \leq \frac{C(\Omega, \M, k)}{\lambda_{n+1}^{k/2}} \sup_{v\in\H',\ \| v \| \leq 1} \| A^*v \|_{H^k(\Omega)} = \frac{C(\Omega, \M, k)}{\lambda_{n+1}^{k/2}} \| A^* \|_{\H' \to H^k(\Omega)}.
    \end{align*}
\end{proof}

\section{Proofs of Section~\ref{sec:Perturbed}} \label{sec_proof_section_perturbed}

\subsection{Relatively bounded perturbation of elliptic operators} \label{app_perturbation}

We start by controlling the norm of the solution operator of a uniformly elliptic operator with lower order terms with respect to the norm of the convection coefficient using perturbation theory of linear operators~\citep{Kato1980perturbation}.

\begin{lemma}[Relatively bounded perturbation of $L$] \label{lem_rel_bounded}
    Let $\L:H_0^1(\Omega)\to H^{-1}(\Omega)$ and $L:H_0^1(\Omega)\to H^{-1}(\Omega)$ denote the elliptic operators $\L : u \mapsto -\div(\bm A\nabla u)+\bm c \cdot \nabla u$ and $L : u \mapsto -\div(\bm A\nabla u)$ defined respectively in \cref{eq_L_tilde_elliptic,eq_L_elliptic}. Then, the operator $\L-L$ is $L$-bounded and
    \[\|\L u- L u\|_{H^{-1}(\Omega)}=\|\bm c\cdot \nabla u\|_{H^{-1}(\Omega)}\leq a\|u\|_{H_0^1(\Omega)}+b\|L u\|_{H^{-1}(\Omega)},\quad u\in H_0^1(\Omega),\]
    with $a=0$ and $b=C(\Omega, p)\|\bm c\|_{L^p(\Omega)}/\lambda$. Here, $C(\Omega, p)$ is a constant depending only on $\Omega$ and $p$.
\end{lemma}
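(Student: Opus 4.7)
The identity $\|\mathcal{L}u - Lu\|_{H^{-1}(\Omega)} = \|\bm{c}\cdot\nabla u\|_{H^{-1}(\Omega)}$ is immediate from the definitions of $\mathcal{L}$ and $L$, so the plan focuses on the inequality. The strategy is (i) use duality to rewrite the $H^{-1}$-norm of $\bm{c}\cdot\nabla u$ as a supremum, (ii) apply a three-factor H\"older inequality to separate $\bm{c}$, $\nabla u$, and the test function $v$, (iii) use the Sobolev embedding \cref{eq_sobolev} to absorb the test function into $\|v\|_{H_0^1}$, and (iv) use uniform ellipticity of $L$ to bound $\|\nabla u\|_{L^2(\Omega)}$ by $\|Lu\|_{H^{-1}(\Omega)}/\lambda$.

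First I would fix $u \in H_0^1(\Omega)$ and write
\[
    \|\bm{c}\cdot\nabla u\|_{H^{-1}(\Omega)} = \sup_{\|v\|_{H_0^1(\Omega)}\le 1} \left|\int_\Omega (\bm{c}\cdot\nabla u)\, v \, \d x\right|.
\]
Choose the exponent $q$ via $\tfrac1p + \tfrac12 + \tfrac1q = 1$, i.e.\ $\tfrac1q = \tfrac12 - \tfrac1p$. Because $p > d \geq 3$, one checks $\tfrac1q \ge \tfrac12 - \tfrac1d = \tfrac1{2^*}$, so $q \in [1, 2^*]$ and the Sobolev embedding \cref{eq_sobolev} gives $\|v\|_{L^q(\Omega)} \le C_{d,q} \|\nabla v\|_{L^2(\Omega)} \le C_{d,q}\|v\|_{H_0^1(\Omega)}$. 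Applying H\"older to the three factors $\bm{c} \in L^p$, $\nabla u \in L^2$, $v \in L^q$ then yields
\[
    \left|\int_\Omega (\bm{c}\cdot\nabla u)\, v\, \d x\right|
    \le \|\bm{c}\|_{L^p(\Omega)}\|\nabla u\|_{L^2(\Omega)}\|v\|_{L^q(\Omega)}
    \le C_{d,q} \|\bm{c}\|_{L^p(\Omega)} \|\nabla u\|_{L^2(\Omega)} \|v\|_{H_0^1(\Omega)}.
\]
Taking the supremum over $v$ gives $\|\bm{c}\cdot\nabla u\|_{H^{-1}(\Omega)} \le C_{d,q}\|\bm{c}\|_{L^p(\Omega)}\|\nabla u\|_{L^2(\Omega)}$.

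Next I would convert $\|\nabla u\|_{L^2(\Omega)}$ into $\|Lu\|_{H^{-1}(\Omega)}$. The weak formulation of $L$ pairs $u$ with itself:
\[
    \lambda\|\nabla u\|_{L^2(\Omega)}^2 \le \int_\Omega \bm{A}\nabla u\cdot\nabla u\, \d x = \langle Lu, u\rangle \le \|Lu\|_{H^{-1}(\Omega)}\|u\|_{H_0^1(\Omega)},
\]
where the first step uses the uniform ellipticity condition \cref{eq_unif_ellipt}. Since the $H_0^1(\Omega)$-norm is equivalent to $\|\nabla u\|_{L^2(\Omega)}$ by Poincar\'e, this rearranges to $\|\nabla u\|_{L^2(\Omega)} \le \|Lu\|_{H^{-1}(\Omega)}/\lambda$ up to a harmless constant that can be absorbed into $C_{d,q}$.

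Combining the two bounds gives the conclusion with $a = 0$ and $b = C(\Omega, p, d)\|\bm{c}\|_{L^p(\Omega)}/\lambda$ for $C(\Omega,p,d) := C_{d,q}$ with $q$ as above. The only real obstacle is bookkeeping the H\"older/Sobolev exponents: one must verify that $p > d$ is exactly the threshold needed so that the conjugate exponent $q$ lies inside the Sobolev embedding window $[1,2^*]$, which is precisely the point at which the assumption $p > d$ from \cite{kim2019green} enters. Everything else is a routine duality-and-coercivity computation.
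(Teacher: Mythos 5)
Your proposal is correct and takes essentially the same approach as the paper: duality, H\"older, the Sobolev embedding for the dual exponent $q=2p/(p-2)$, the coercivity estimate $\lambda\|\nabla u\|_{L^2}^2\le\langle Lu,u\rangle\le\|Lu\|_{H^{-1}}\|u\|_{H_0^1}$, and Poincar\'e. The only cosmetic difference is that you apply a single three-factor H\"older inequality where the paper applies two two-factor H\"older steps; the exponents and constants that result are identical.
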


\begin{proof}
    Let $u\in H_0^1(\Omega)$, we begin the proof by showing that $\bm{c}\cdot \nabla u\in H^{-1}(\Omega)$. By definition, we have
    \[\|\bm c\cdot \nabla u\|_{H^{-1}(\Omega)} = \sup_{\substack{v\in H^1_0(\Omega)\\ \|v\|_{H^1_0(\Omega)}\leq 1}}|\langle \bm c\cdot\nabla u, \, v\rangle|.\]
    Let $v \in H_0^1(\Omega)$ with $\|v\|_{H^1_0}\leq 1$.
    First considering the case when $d \geq 2$, we apply H\"older's inequality with conjugate exponents $q = 2p / (p-2)$ and $q^* = 2p /(p+2)$ to obtain
    \[
        | \langle \bm c \cdot \grad u,\ v \rangle |
        \leq \| \bm c \cdot \grad u \|_{L^{q^*}(\Omega)} \| v \|_{L^q(\Omega)}.
    \]
    Since $2 < q < 2d / (d-2)$ the Sobolev space $H_0^1(\Omega)$ is continuously embedded in $L^q(\Omega)$ thanks to \cite[Thm.~4.2, Part~I, Cases~B~and~C]{adams2003sobolev}. With the embedding constant being $C(\Omega, p)$, this yields
    \[
        | \langle \bm c \cdot \grad u,\ v \rangle |
        \leq C(\Omega, p) \| \bm c \cdot \grad u \|_{L^{q^*}(\Omega)} \| v \|_{H_0^1(\Omega)}.
    \]
    A second application of H\"older's inequality noting that $1/q^* = 1/p + 1/2 \leq 1$ gives
    \[
        \| \bm c \cdot \grad u \|_{L^{q^*}(\Omega)}
        \leq \| \bm c \|_{L^p(\Omega)} \| \grad u \|_{L^2(\Omega)}.
    \]
    Combining these results gives
    \begin{equation}\label{eq_bound_c_nabla_u}
        \| \bm c \cdot \grad u \|_{H^{-1}}
        \leq C(\Omega, p) \| \bm c \|_{L^p(\Omega)} \| \grad u \|_{L^2(\Omega)}.
    \end{equation}

    We will now express $\|\nabla u\|_{L^2(\Omega)}$ in terms of $\|L u\|_{H^{-1}(\Omega)}$ using the ellipticity of $L$. The ellipticity condition in \cref{eq_unif_ellipt} implies that
    \[
        \|Lu\|_{H^{-1}(\Omega)}=\sup_{\substack{v\in H_0^1(\Omega),\\v\neq 0}}\frac{\langle\bm A \nabla u, \nabla v\rangle}{\|v\|_{H_0^1(\Omega)}}\geq \frac{\langle\bm A \nabla u, \nabla u\rangle}{\|u\|_{H_0^1(\Omega)}}\geq \lambda \frac{\|\nabla u\|_{L^2(\Omega)}^2}{\|u\|_{H_0^1(\Omega)}}.
    \]
    However, following Sobolev inequality's (\cref{eq_sobolev}), we have
    \[
        \|u\|_{H_0^1(\Omega)}\leq (1+C_{d,2})\|\nabla u\|_{L^2(\Omega)},
    \]
    which implies that
    \[
        \|\nabla u\|_{L^2(\Omega)}\leq \frac{1+C_{d,2}}{\lambda}\|L u\|_{H^{-1}(\Omega)}.
    \]
    Finally, we combine this inequality with \cref{eq_bound_c_nabla_u} to obtain
    \[
        \|\bm c\cdot \nabla u\|_{H^{-1}(\Omega)}\leq \frac{C(\Omega,p)(1+C_{d,2})}{\lambda}\|\bm c\|_{L^p(\Omega)} \|L u\|_{H^{-1}(\Omega)},
    \]
    which concludes the proof.
\end{proof}

\begin{remark}[Extension to dimensions $d=1,2$]
    The proof of \cref{lem_rel_bounded} remains valid in dimension $d=2$. In dimension $d=1$, one requires $p\geq 2$ and the following modification to derive \cref{eq_bound_c_nabla_u}. When $d = 1$, $H_0^1(\Omega)$ is continuously embedded in $L^{\infty}(\Omega)$ thanks to \cite[Thm.~4.2, Part~I, Case~A]{adams2003sobolev}.
    Using this and H\"older's inequality yields
    \[
        | \langle \bm c \cdot \grad u,\ v \rangle |
        \leq \| \bm c \cdot \grad u \|_{L^{1}(\Omega)} \| v \|_{L^{\infty}(\Omega)}
        \leq C'(\Omega, p) \| \bm c \cdot \grad u \|_{L^{1}(\Omega)} \| v \|_{H_0^1(\Omega)}.
    \]
    Another application of H\"older's inequality noting that $1 = (p-2)/(2p) + 1/p + 1/2$ gives
    \[
        \| \bm c \cdot \grad u \|_{L^{1}(\Omega)}
        \leq | \Omega |^{\frac{p-2}{2p}} \| \bm c \|_{L^p(\Omega)} \| \grad u \|_{L^2(\Omega)}.
    \]
    Combining these results yields \cref{eq_bound_c_nabla_u} for the $d=1$ case with the constant $C(\Omega, p) = C'(\Omega,p) | \Omega |^{\frac{p-2}{2p}}$. The proof of \cref{lem_rel_bounded} then proceeds as before.
\end{remark}

The following proposition provides an explicit bound on the norm of the solution operator $A$ in terms of the norm of the solution operator $T$ that depends explicitly on the convection coefficient $\bm c$. In particular, an asymptotic equivalent of the bound indicates that the right-hand side grows linearly with $\bm c$ in the regime of small perturbations.

\begin{proposition}[$H^{-1}$-perturbation bound] \label{prop_perturbed_inverse}
    Let $A$ and $T:H^{-1}(\Omega)\to H^1_0(\Omega)$ denote the solution operators associated with the elliptic operators $\L : u \mapsto -\div(\bm A\nabla u)+\bm c \cdot \nabla u$ and $L : u \mapsto -\div(\bm A\nabla u)$, defined respectively in \cref{eq_L_tilde_elliptic,eq_L_elliptic}. There exists a constant $C(\Omega,p)$, depending only on $\Omega$ and $p$, such that if $\bm c\in L^p(\Omega)$ satisfies $\|\bm c\|_{L^p(\Omega)}<\lambda/C(\Omega,p)$, then the operator norms of $A$ and $A^*$ are bounded as
    \[\|A^*\|_{H^{-1}(\Omega)\to H^1_0(\Omega)}=\|A\|_{H^{-1}(\Omega)\to H^1_0(\Omega)}\leq \frac{\lambda}{\lambda-C(\Omega,p)\|\bm c\|_{L^p(\Omega)}}\|T\|_{H^{-1}(\Omega)\to H^1_0(\Omega)}.\]
\end{proposition}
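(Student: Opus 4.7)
The strategy is to treat $\L$ as a relatively bounded perturbation of $L$ and to bootstrap a bound on $\|LAf\|_{H^{-1}}$ out of the identity $\L = L + B$, where $Bu := \bm c \cdot \nabla u$. By \cref{lem_rel_bounded}, we know that $B$ is $L$-bounded with constants $a = 0$ and $b = C(\Omega,p,d)\|\bm c\|_{L^p(\Omega)}/\lambda$, and the smallness assumption on $\|\bm c\|_{L^p(\Omega)}$ is exactly what guarantees $b<1$. I plan to exploit this alongside the fact that $\L$ and $L$ are both bijections $H^1_0(\Omega) \to H^{-1}(\Omega)$ (as established in \cite[Lem.~4.2]{kim2019green} and by uniform ellipticity, respectively).

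The first step will be to derive the operator identity $A = T - TBA$ as maps $H^{-1}(\Omega) \to H^1_0(\Omega)$. Indeed, from $\L A = I$ on $H^{-1}(\Omega)$ and $\L = L + B$, we obtain $LA = I - BA$, and applying $T$ on the left (using $TL = I$ on $H^1_0(\Omega)$, which in particular holds on $\Range(A)$) yields $A = T - TBA$. Note that each composition is well-defined because $A : H^{-1} \to H^1_0$, $B : H^1_0 \to H^{-1}$, and $T : H^{-1} \to H^1_0$.

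Next I will bootstrap using the relative boundedness. Fix $f \in H^{-1}(\Omega)$. From $LAf = f - BAf$ and the $L$-boundedness of $B$ with $a=0$, we get
\[
\|LAf\|_{H^{-1}} \le \|f\|_{H^{-1}} + \|BAf\|_{H^{-1}} \le \|f\|_{H^{-1}} + b\|LAf\|_{H^{-1}}.
\]
Since $b<1$, rearranging gives $\|LAf\|_{H^{-1}} \le (1-b)^{-1}\|f\|_{H^{-1}}$, and hence $\|BAf\|_{H^{-1}} \le b(1-b)^{-1}\|f\|_{H^{-1}}$. Plugging this into the identity $Af = Tf - TBAf$ and using the operator norm of $T$ gives
\[
\|Af\|_{H^1_0} \le \|T\|\|f\|_{H^{-1}} + \|T\|\|BAf\|_{H^{-1}} \le \frac{\|T\|}{1-b}\|f\|_{H^{-1}},
\]
and substituting the value of $b$ produces the stated factor $\lambda/(\lambda - C(\Omega,p,d)\|\bm c\|_{L^p(\Omega)})$.

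Finally, the equality $\|A^*\|_{H^{-1}\to H^1_0} = \|A\|_{H^{-1}\to H^1_0}$ is immediate from the standard Hilbert-space identity $\|A^*\| = \|A\|$, once one identifies $(H^1_0)^*$ with $H^{-1}$ via the duality pairing used to define $A^*$ in the paper. The only real delicate point is the circularity in the bootstrap: we estimate $\|LAf\|_{H^{-1}}$ by itself, and without the smallness condition $b<1$ the argument collapses. The smallness hypothesis $\|\bm c\|_{L^p(\Omega)} < \lambda/C(\Omega,p,d)$ is therefore not cosmetic, and the final constant $1/(1-b)$ blows up precisely as $\|\bm c\|_{L^p}$ approaches the threshold, matching the $\mathcal{O}(\|\bm c\|_{L^p})$ linear-in-perturbation behavior flagged in the discussion preceding the proposition.
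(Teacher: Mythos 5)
Your proof is correct and takes essentially the same route as the paper: both use \cref{lem_rel_bounded} to establish that $\L - L$ is $L$-bounded with relative bound $b = C(\Omega,p,d)\|\bm c\|_{L^p(\Omega)}/\lambda$, and then pass from this to the operator-norm bound on $A$, finishing with $\|A^*\| = \|A\|$ via \cite[Eq.~(4.6)]{kim2019green}. The only difference is that the paper invokes Kato's stability-of-bounded-invertibility theorem \citep[Ch.~4, Thm.~1.16]{Kato1980perturbation} as a citation, whereas you reproduce its short bootstrap argument inline; the resulting constant $\lambda/(\lambda - C(\Omega,p,d)\|\bm c\|_{L^p(\Omega)})$ is identical.
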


\begin{proof}
    Following the Lax-Milgram theorem~\cite[Sec.~6.2.1]{evans10}, $L$ is invertible, and its inverse is given by the solution operator $T:H^{-1}(\Omega)\to H_0^1(\Omega)$. Moreover, following \cref{lem_rel_bounded}, the operator $\L-L:H_0^1(\Omega)\to H^{-1}(\Omega)$ is $L$-bounded with constants $a=0$ and $b=C(\Omega, p)\|\bm c\|_{L^p(\Omega)}/\lambda$. We now apply the stability of bounded invertibility of linear operators~\cite[Sec.~4, Thm.~1.16]{Kato1980perturbation}, which states that if $b<1$, then $\L$ is invertible and its inverse $A$ satisfies
    \[\|A\|_{H^{-1}(\Omega)\to H_0^1(\Omega)}\leq \frac{\|T\|_{H^{-1}(\Omega)\to H_0^1(\Omega)}}{1-a\|T\|_{H^{-1}(\Omega)\to H_0^1(\Omega)}-b}.\]
    Finally, the observation that $A^*$ is the $H^{-1}(\Omega)\to H_0^1(\Omega)$ adjoint of $A$~\cite[Eq.~(4.6)]{kim2019green} ensures that $\|A^*\|_{H^{-1}(\Omega)\to H_0^1(\Omega)}=\|A\|_{H^{-1}(\Omega)\to H_0^1(\Omega)}$.
\end{proof}

The proof of \cref{prop_perturbed_inverse} exploits the stability of perturbed bounded inverse of linear operators~\citep[Chap.~4, Thm.~1.16]{Kato1980perturbation}. Interestingly, \citet[Prop.~6.14]{kim2019green} prove a stronger result than \cref{prop_perturbed_inverse}, which is not based on a perturbation argument, by showing that the norm of the solution operator $A$ is bounded unconditionally on the magnitude of the convection coefficient $\bm c$. However, the resulting bound depends implicitly on the $L^p$-norm of $\bm c$, and making this dependence explicit might be challenging.

When the coefficient functions $\bm A$, $\bm c$, and the boundary of the domain $\partial \Omega$ are smooth, we can derive higher-order regularity estimates for the norm of the solution operator $A$ and its adjoint $A^*$ (see~\cref{prop_higher_regularity}). We first prove an intermediate result.

\begin{lemma}\label{lem_mult_in_Hm_by_Cm_fun}
    Let $\phi \in L^{\infty}(\Omega)$ and $u \in L^2(\Omega)$. Then $\phi u \in L^2(\Omega)$ and
    \[
        \| \phi u \|_{L^2(\Omega)} \leq \| \phi \|_{L^{\infty}(\Omega)} \| u \|_{L^2(\Omega)}.
    \]
    For any integer $m \geq 0$, let $\phi \in C^m(\Omega)$ and $u \in H^m(\Omega)$.
    Then $\phi u \in H^m(\Omega)$ and there is a constant $C(m)$ depending only on $m$ such that
    \[
        \| \phi u \|_{H^m(\Omega)} \leq C(m) \| \phi \|_{C^m(\Omega)} \| u \|_{H^m(\Omega)}
    \]
\end{lemma}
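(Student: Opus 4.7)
The plan is to handle Part 1 by a direct pointwise estimate and then obtain Part 2 by induction on $m$ via the Leibniz rule for weak derivatives.

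For Part 1, the defining property of the essential supremum yields $|\phi(x)|\leq\|\phi\|_{L^\infty(\Omega)}$ for almost every $x\in\Omega$, so $|\phi(x)u(x)|^2\leq\|\phi\|_{L^\infty(\Omega)}^2|u(x)|^2$ a.e. Integrating over $\Omega$ and taking square roots gives both the membership $\phi u\in L^2(\Omega)$ and the stated inequality.

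For Part 2, the base case $m=0$ is immediate from Part 1 together with the bound $\|\phi\|_{L^\infty(\Omega)}\leq\|\phi\|_{C^0(\Omega)}$. For $m\geq 1$, the Leibniz rule for weak derivatives gives, for every multi-index $\alpha$ with $|\alpha|\leq m$,
$$
\partial^\alpha(\phi u)=\sum_{\beta\leq\alpha}\binom{\alpha}{\beta}(\partial^\beta\phi)(\partial^{\alpha-\beta}u).
$$
Each factor satisfies $\partial^\beta\phi\in C^{m-|\beta|}(\Omega)$ with $\|\partial^\beta\phi\|_{L^\infty(\Omega)}\leq\|\phi\|_{C^m(\Omega)}$, and $\partial^{\alpha-\beta}u\in L^2(\Omega)$ with $\|\partial^{\alpha-\beta}u\|_{L^2(\Omega)}\leq\|u\|_{H^m(\Omega)}$. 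Applying Part 1 to each summand and using the triangle inequality gives
$$
\|\partial^\alpha(\phi u)\|_{L^2(\Omega)}\leq C(\alpha)\,\|\phi\|_{C^m(\Omega)}\,\|u\|_{H^m(\Omega)},
$$
for a combinatorial constant $C(\alpha)$ depending only on $|\alpha|$. Squaring and summing over $|\alpha|\leq m$ yields the stated inequality with a constant $C(m)$ depending only on $m$.

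The only step that is not a routine computation is justifying the weak Leibniz rule when $\phi\in C^m(\Omega)$ is not necessarily smooth and $u\in H^m(\Omega)$ is only weakly differentiable. I would establish this by mollification: approximate $u$ by a sequence $u_k\in C^\infty(\Omega)\cap H^m(\Omega)$ with $u_k\to u$ in $H^m_{\mathrm{loc}}(\Omega)$ via the Meyers--Serrin theorem, apply the classical Leibniz rule to the smooth product $\phi u_k$ against an arbitrary test function, and pass to the limit on each term using Part 1 to control the $L^2$-convergence against compactly supported test functions. Alternatively, one can invoke the standard product rule for Sobolev functions found in \citep{adams2003sobolev,evans10}.
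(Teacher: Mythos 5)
Your proof is correct, and it reaches the same conclusion by a genuinely different route than the paper. The paper proceeds by induction on $m$ using only the first-order product rule $\grad(\phi f) = f\grad\phi + \phi\grad f$ for smooth $f$, bounding $\|\phi f\|_{H^{m+1}(\Omega)}$ by splitting off the top-order gradient and invoking the inductive estimate on the three resulting terms; it then passes to general $u \in H^{m+1}(\Omega)$ by Meyers--Serrin density and $L^2$-uniqueness of the limit. This gives the explicit constant $C(m) = 3^m$ and never needs the full multi-index Leibniz formula. You instead apply the full Leibniz rule $\partial^\alpha(\phi u) = \sum_{\beta\le\alpha}\binom{\alpha}{\beta}(\partial^\beta\phi)(\partial^{\alpha-\beta}u)$ for each $|\alpha|\le m$, bound each summand via Part~1, and sum in $\ell^2$ over $\alpha$; the inductive framing you announce is not actually used, as the argument is a direct expansion. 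Your route is more compact and gives a sharper combinatorial constant, but it shifts the burden onto justifying the weak multi-index Leibniz rule for $\phi\in C^m(\Omega)$ and $u\in H^m(\Omega)$, which you correctly observe requires its own density/mollification argument --- and that argument is essentially the same Meyers--Serrin approximation the paper already uses for the estimate itself. In short, the paper amortizes the density step across the induction to avoid ever stating the full Leibniz rule, whereas you front-load the Leibniz rule and then only need a one-shot density argument. Both are sound; one minor inconsistency in your write-up is the claim of ``induction on $m$'' when the argument given is not inductive.
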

\begin{proof}
    The first assertion and the $m=0$ case are readily established by observing that
    \[
        \| \phi u \|_{L^2(\Omega)}^2 = \int_{\Omega} | \phi |^2 | u |^2 \d x \leq \| \phi \|_{L^{\infty}(\Omega)}^2 \| u \|_{L^2(\Omega)}^2.
    \]
    To prove the remaining cases, we proceed by induction on $m$, assuming the stated result holds for every $\phi \in C^m(\Omega)$ and every $u \in H^m(\Omega)$.
    Choose $\phi \in C^{m+1}(\Omega)$ and $u \in H^{m+1}(\Omega)$.
    For any function $f \in C^{\infty}(\Omega)\cap H^{m+1}(\Omega)$ we have
    \begin{align*}
        \| \phi f \|_{H^{m+1}(\Omega)}
         & \leq \| \phi f \|_{H^{m}(\Omega)} + \| \grad (\phi f) \|_{H^{m}(\Omega)}                                                                                                                  \\
         & \leq \| \phi f \|_{H^{m}(\Omega)} + \| f \grad \phi \|_{H^{m}(\Omega)} + \| \phi \grad f \|_{H^{m}(\Omega)}                                                                               \\
         & \leq C(m)\!\left( \| \phi \|_{C^m(\Omega)} \| f \|_{H^m(\Omega)} {+} \| \grad\phi \|_{C^m(\Omega)} \| f \|_{H^m(\Omega)} {+} \| \phi \|_{C^m(\Omega)} \| \grad f \|_{H^m(\Omega)} \right) \\
         & \leq 3 C(m) \| \phi \|_{C^{m+1}(\Omega)} \| f \|_{H^{m+1}(\Omega)}.
    \end{align*}
    By the Meyers--Serrin Theorem~\cite[Thm.~3.17]{adams2003sobolev}, there exists a sequence $\{ u_k \}_{k=1}^{\infty} \subset C^{\infty}(\Omega)\cap H^{m+1}(\Omega)$ converging to $u$ in $H^{m+1}(\Omega)$. By the $m=0$ case $\phi u_k \to \phi u$ in $L^2(\Omega)$. Taking $f = u_k - u_l$ in the above inequality proves that $\phi u_k$ is a Cauchy sequence in $H^{m+1}(\Omega)$, and thus converges to some limit $g \in H^{m+1}(\Omega)$. By uniqueness of the limits in $L^2(\Omega)$, it follows that $\phi u = g$, proving that $\phi u_k \to \phi u$ in $H^{m+1}(\Omega)$. With $f = u_k$ in the inequality we have
    \[
        \| \phi u_k \|_{H^{m+1}(\Omega)} \leq 3 C(m) \| \phi \|_{C^{m+1}(\Omega)} \| u_k \|_{H^{m+1}(\Omega)}.
    \]
    Since both $\phi u_k \to \phi u$ and $u_k \to u$ in $H^{m+1}(\Omega)$ both sides of the inequality converge, yielding the desired result with $C(m) = 3^m$.
\end{proof}

\begin{proposition}[Higher-order regularity perturbation bounds] \label{prop_higher_regularity}
    With the notations of \cref{prop_perturbed_inverse}, assume that the coefficient matrix $\bm A = (a^{ij})$ satisfies $a^{ij}\in C^1(\Omega)$, the lower order coefficient vector $\bm c=(c^1,\ldots,c^d)\in L^\infty(\Omega)$, and that $\partial \Omega\in C^2$. There exists a constant $C(\Omega)$, depending only on $\Omega$, such that if $\bm c$ satisfies $\|\bm c\|_{L^\infty(\Omega)}<\lambda/C(\Omega)$, then the operator norms of $A$ and $A^*$ are bounded as
    \[\|A^*\|_{L^2(\Omega)\to H^2_0(\Omega)}=\|A\|_{L^2(\Omega)\to H^2_0(\Omega)}\leq \frac{\lambda}{\lambda-C(\Omega)\|\bm c\|_{L^\infty(\Omega)}}\|T\|_{L^2(\Omega)\to H_0^2(\Omega)}.\]
    Moreover, let $m\geq 1$ be an integer and assume that $\Omega \subset \R^3$. Suppose that $a^{ij},\bm c\in C^{m+1}(\Omega)$, and $\partial\Omega\in C^{m+2}$. Then, there exists a constant $C(\Omega, \bm A, m)$, depending only on $\Omega$, $\bm A$, and $m$, such that
    \[\|A^*\|_{H^m(\Omega)\to H^{m+2}_0(\Omega)}{=}\|A\|_{H^m(\Omega)\to H^{m+2}_0(\Omega)}{\leq} \frac{1}{1-C(\Omega,\bm A, m)\|\bm c\|_{C^{m+1}(\Omega)}}\|T\|_{H^m(\Omega)\to H^{m+2}_0(\Omega)}.\]
\end{proposition}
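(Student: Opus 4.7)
The plan is to mimic the proof of Proposition~\ref{prop_perturbed_inverse} but in higher-regularity topologies. In both parts the strategy is: (i) upgrade $L$ to a bounded bijection between the stated Sobolev spaces using elliptic regularity, so that $T$ is its bounded inverse; (ii) control $\L-L=\bm{c}\cdot\nabla$ by a relative estimate $\|(\L-L)u\|\leq a\|u\|+b\|Lu\|$ in the target norm; and (iii) invoke Kato's stability of bounded invertibility~\cite[Chap.~IV, Thm.~1.16]{Kato1980perturbation} to obtain invertibility of $\L$ together with the bound $\|A\|\leq\|T\|/(1-a\|T\|-b)$. The identity $\|A^*\|=\|A\|$ is recovered by applying the same argument to the adjoint operator $\L^\top v=-\vdiv(\bm{A}\nabla v+\bm{c}v)=Lv-(\vdiv\bm{c})v-\bm{c}\cdot\nabla v$.

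For the $L^2\to H^2_0$ bound, the assumptions $a^{ij}\in C^1(\Omega)$ and $\partial\Omega\in C^2$ are exactly those of classical $H^2$-regularity~\cite[Sec.~6.3, Thm.~4]{evans10}, so $T:L^2(\Omega)\to H^2_0(\Omega)$ is bounded and bijective. For the perturbation, H\"older's inequality gives $\|\bm{c}\cdot\nabla u\|_{L^2}\leq\|\bm{c}\|_{L^\infty}\|\nabla u\|_{L^2}$, while combining ellipticity~\eqref{eq_unif_ellipt} with Cauchy--Schwarz and Poincar\'e on $\lambda\|\nabla u\|_{L^2}^2\leq\langle Lu,u\rangle$ in $H^1_0(\Omega)$ yields $\|\nabla u\|_{L^2}\leq C(\Omega)\lambda^{-1}\|Lu\|_{L^2}$. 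Thus $a=0$ and $b=C(\Omega)\|\bm{c}\|_{L^\infty}/\lambda$; the smallness hypothesis forces $b<1$, and Kato's theorem delivers the claimed bound.

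For the $H^m\to H^{m+2}_0$ bound (with $d=3$ and $C^{m+1}$ regularity on $\bm{A},\bm{c}$, $C^{m+2}$ on $\partial\Omega$), higher-order elliptic regularity~\cite[Sec.~6.3, Thm.~5]{evans10} promotes $T$ to a bounded bijection $H^m(\Omega)\to H^{m+2}_0(\Omega)$. Applying Lemma~\ref{lem_mult_in_Hm_by_Cm_fun} componentwise to the product $\bm{c}\cdot\nabla u$ and using the continuous embedding $H^{m+1}(\Omega)\hookrightarrow H^m(\Omega)$ on gradients, together with $\|\nabla u\|_{H^m}\leq\|u\|_{H^{m+2}_0}$, yields
\[
\|\bm{c}\cdot\nabla u\|_{H^m(\Omega)} \leq C(m,d)\|\bm{c}\|_{C^m(\Omega)}\|u\|_{H^{m+2}_0(\Omega)},
\]
so $a=C(m,d)\|\bm{c}\|_{C^m}$ and $b=0$. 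Under the smallness hypothesis (stated with the slightly larger quantity $\|\bm{c}\|_{C^{m+1}}$), Kato's theorem produces the stated bound with $C(\Omega,\bm{A},m)$ absorbing the factor $C(m,d)\|T\|_{H^m\to H^{m+2}_0}$.

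The main obstacle is the adjoint statement. The perturbation of $L$ in $\L^\top$ contains not only the first-order term $\bm{c}\cdot\nabla v$ but also the zero-order term $(\vdiv\bm{c})v$, and this is precisely why the regularity assumption on $\bm{c}$ is strengthened from $C^m$ to $C^{m+1}$: it guarantees $\vdiv\bm{c}\in C^m$, so Lemma~\ref{lem_mult_in_Hm_by_Cm_fun} controls $(\vdiv\bm{c})v$ in $H^m$ by $\|v\|_{H^m}\leq\|v\|_{H^{m+2}_0}$ with the same dependence on $\bm{c}$ as in the forward problem. Analogously for the $L^2\to H^2_0$ case, one notes $\|\vdiv(\bm c v)\|_{L^2}\leq\|\bm c\|_{L^\infty}\|\nabla v\|_{L^2}+\|\vdiv\bm c\|_{L^\infty}\|v\|_{L^2}$, which is again $L$-bounded. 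The Kato argument then runs verbatim for $\L^\top$, delivering the identical bound for the adjoint solution operator and hence the claimed equality of norms.
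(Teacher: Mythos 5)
Your treatment of the forward bounds matches the paper's proof: both run the same $L$-bounded perturbation argument and invoke Kato's stability theorem~\citep[Chap.~IV, Thm.~1.16]{Kato1980perturbation}. The only cosmetic difference is that you place the multiplication estimate $\|\bm c\cdot\nabla u\|_{H^m}\lesssim\|\bm c\|_{C^m}\|u\|_{H^{m+2}_0}$ into the relative bound constant $a$ (which then multiplies $\|T\|$), whereas the paper passes through the elliptic regularity estimate $\|u\|_{H^{m+2}_0}\lesssim\|Lu\|_{H^m}$ and places everything into $b$; the resulting bounds coincide once $C(\Omega,\bm A,m)$ absorbs $\|T\|_{H^m\to H^{m+2}_0}$. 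You also correctly note that the forward $H^m$ estimate via \cref{lem_mult_in_Hm_by_Cm_fun} only requires $\|\bm c\|_{C^m}$, which is weaker than the $\|\bm c\|_{C^{m+1}}$ the paper uses. Moreover, you go further than the paper by attempting to prove the claimed equality $\|A^*\|=\|A\|$ via a parallel Kato argument on $\L^\top v = Lv - \div(\bm c v)$, with the attractive observation that $\bm c\in C^{m+1}$ is precisely what guarantees $\div\bm c\in C^m$ so that the zeroth-order term is controlled in $H^m$; the paper's own proof only bounds $\|A\|$ and never addresses the adjoint.

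However, your adjoint argument for the $L^2\to H^2_0$ case has a genuine gap. You estimate $\|\div(\bm c v)\|_{L^2}\leq\|\bm c\|_{L^\infty}\|\nabla v\|_{L^2}+\|\div\bm c\|_{L^\infty}\|v\|_{L^2}$, but the hypothesis is only $\bm c\in L^\infty(\Omega)$, which gives no control whatsoever on $\div\bm c$: it is merely a distribution, and $\|\div\bm c\|_{L^\infty}$ can be infinite. Without additional regularity on $\bm c$ (say $W^{1,\infty}$), the map $v\mapsto-\div(\bm c v)$ need not be bounded from $H^2_0(\Omega)$ into $L^2(\Omega)$, so it is not $L$-bounded in the required sense, and the adjoint boundary value problem need not have $H^2$ solutions for $L^2$ data. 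Your relative boundedness estimate therefore fails under the stated hypotheses, and the Kato argument does not go through for $\L^\top$ in this topology. Since the paper itself is silent on this equality, the correct resolution is not evident, but the expansion of $\div(\bm c v)$ is not it.
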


\begin{proof}
    The proof is similar to the one of \cref{prop_perturbed_inverse}. We begin by showing that the operator $\L-L:H_0^2(\Omega)\to L^2(\Omega)$ is $L$-bounded (see~\cref{lem_rel_bounded}). Let $u\in H_0^2(\Omega)$, we aim to control the $L^2$-norm of $(\L-L)u = \bm c\cdot \nabla u$ with respect to $\|L u\|_{L^2(\Omega)}$. Combining H\"older's inequality with the ellipticity of $L$ (\cref{eq_unif_ellipt}) yields
    \begin{equation} \label{eq_elliptic_holder}
        \|\bm c\cdot\nabla u\|_{L^2(\Omega)}\leq \|\bm c\|_{L^\infty(\Omega)}\|\nabla u\|_{L^2(\Omega)},\quad \text{and}\quad \|\nabla u\|_{L^2(\Omega)}^2\leq \frac{1}{\lambda}\langle \bm A \nabla u,\nabla u\rangle_{L^2(\Omega)}.
    \end{equation}
    We then apply Cauchy--Schwarz inequality to obtain
    \[
        \langle \bm A \nabla u,\nabla u\rangle_{L^2(\Omega)}=\langle-\vdiv \bm A\nabla u,u\rangle_{L^2(\Omega)}\leq \|L u\|_{L^2(\Omega)}\|u\|_{L^2(\Omega)}.
    \]
    Following Poincar\'e's inequality (a special case of Sobolev's inequality in~\cref{eq_sobolev}), there exists a constant $C(\Omega)$, depending only on $\Omega$, such that
    \[\|u\|_{L^2(\Omega)}\leq C(\Omega)\|\nabla u\|_{L^2(\Omega)}.\]
    Therefore, after dividing the right inequality in \cref{eq_elliptic_holder} by $\|\nabla u\|_{L^2(\Omega)}$, we have
    \[\|\nabla u\|_{L_2(\Omega)}\leq \frac{C(\Omega)}{\lambda}\|L u\|_{L^2(\Omega)}.\]
    Finally, combining this inequality with \cref{eq_elliptic_holder} shows that
    \[\|\bm c \cdot \nabla u\|_{L^2(\Omega)}\leq \frac{C(\Omega)}{\lambda}\|\bm c\|_{L^\infty(\Omega)}\|L u\|_{L^2(\Omega)}.\]
    Hence, $\L-L$ is $L$-bounded with $a=0$ and $b=C(\Omega)\|\bm c\|_{L^\infty}/\lambda$. We can now apply the stability result for bounded linear operators~\citep[Chap.~4, Thm.~1.16]{Kato1980perturbation} to obtain
    \[\|A\|_{L^2(\Omega)\to H^2(\Omega)}\leq \frac{\lambda}{\lambda-C(\Omega)\|\bm c\|_{L^\infty}}\|T\|_{L^2(\Omega)\to H_0^2(\Omega)}.\]

    We now prove the higher-order estimate. Let $m\geq 1$ be an integer, $u\in H_0^{m+2}(\Omega)$, and $\bm c\in C^{m+1}$. Let $f=-\vdiv\bm a\nabla u\in H^m(\Omega)$, following regularity estimates for the solution to second order elliptic equations~\citep[Chap.~6.3, Thm.~5]{evans10}, there exists a constant $C(\Omega,\bm A, m)$ such that
    \begin{equation} \label{eq_regularity_hm}
        \|u\|_{H_0^{m+2}(\Omega)}\leq C(\Omega,\bm A, m)\|f\|_{H^m(\Omega)}.
    \end{equation}
    Moreover, we estimate the $H^m$-norm as $(\L-L)(u)=\bm c \cdot \nabla u$ using \cref{lem_mult_in_Hm_by_Cm_fun}.
    Hence, there exists a constant $C'(m)$, depending only on $m$, such that
    \[\|\bm c \cdot \nabla u\|_{H^{m+1}(\Omega)}\leq C'(m)\|\bm c\|_{C^{m+1}(\Omega)}\|\nabla u\|_{H^{m+1}(\Omega)},\]
    as $\nabla u\in H^{m+1}(\Omega)$ since $u\in H_0^{m+2}(\Omega)$. Therefore,
    \begin{align*}
        \|\bm c \cdot \nabla u\|_{H^m(\Omega)} & \leq \|\bm c \cdot \nabla u\|_{H^{m+1}(\Omega)}\leq C'( m)\|\bm c\|_{C^{m+1}(\Omega)}\|\nabla u\|_{H^{m+1}(\Omega)} \\
                                               & \leq C'( m)\|\bm c\|_{C^{m+1}(\Omega)}\|u\|_{H_0^{m+2}(\Omega)}.
    \end{align*}
    We combine this inequality with \cref{eq_regularity_hm} to obtain
    \[\|\bm c \cdot \nabla u\|_{H^m(\Omega)}\leq C''(\Omega,\bm A, m)\|\bm c\|_{C^{m+1}(\Omega)}\|f\|_{H^m(\Omega)}=C''(\Omega,\bm A, m)\|\bm c\|_{C^{m+1}(\Omega)}\|L u\|_{H^m(\Omega)},\]
    where $C''(\Omega,\bm A, m)=C(\Omega,\bm A, m)C'(m)$. This shows that $\L-L$ is $L$-bounded with $a=0$ and $b=C''(\Omega,\bm A, m)\|\bm c\|_{C^{m+1}(\Omega)}$. After applying the stability result for bounded linear operators~\citep[Chap.~4, Thm.~1.16]{Kato1980perturbation}, we obtain
    \[\|A\|_{H^m(\Omega)\to H^{m+2}_0(\Omega)}\leq \frac{1}{1-C''(\Omega,\bm A, m)\|\bm c\|_{C^{m+1}(\Omega)}}\|T\|_{H^m(\Omega)\to H^{m+2}_0(\Omega)},\]
    which concludes the proof.
\end{proof}

\subsection{Self-adjointness of $L$ on Sobolev spaces}

We begin by introducing an inner product on $H^{-1}(\Omega)$, that is equivalent to the usual inner product on $H^{-1}(\Omega)$ and is compatible with the inner product on $H^1_0(\Omega)$ induced by the operator $L$. The aim is to show that the operator $L$ is self-adjoint on $H^{-1}(\Omega)$ with respect to this inner product so that \cref{thm:operator_projection} applies.

\begin{lemma} \label{lem_self_adjoint_uniform}
    Let $L:H_0^1(\Omega)\to H^{-1}(\Omega)$ be the uniformly elliptic operator $u \mapsto -\div(\bm A\nabla u)$ defined by \cref{eq_L_elliptic}, and introduce the inner product induced by $L$ on $H_0^1(\Omega)$ as
    \[
        \innerii{u, v}_{H_0^{1}(\Omega)}
        = \langle L u, v \rangle
        = \langle \bm A \grad u, \grad v \rangle_{L^2(\Omega)}.
    \]
    Then, the corresponding norm $\|\cdot\|_{H_0^1(\Omega),L}$ is equivalent to $\|\cdot\|_{H_0^1(\Omega)}$, and the dual space, $H^{-1}(\Omega)$, of $H_0^1(\Omega)$ equipped with $\|\cdot\|_{H_0^1(\Omega),L}$ has inner product given by
    \begin{equation}\label{eq_Hm1_inner_prod}
        \innerii{f, g}_{H^{-1}(\Omega)} = \langle f, T g \rangle.
    \end{equation}
    Moreover, the operator $L_{-1}:D(L_{-1})=H_0^1(\Omega) \subset H^{-1}(\Omega) \to H^{-1}(\Omega)$ defined by $u \mapsto L u$ is self-adjoint with respect to the inner product \cref{eq_Hm1_inner_prod}.
    Here, the inclusion of $H_0^1(\Omega)$ in $H^{-1}(\Omega)$ is understood in the sense of the Gelfand triple $H_0^1(\Omega) \subset L^2(\Omega) \cong L^2(\Omega)^* \subset H^{-1}(\Omega)$ and is given explicitly by
    $u \mapsto \langle u, \cdot \rangle_{L^2(\Omega)}$.
\end{lemma}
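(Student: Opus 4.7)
The plan is to verify the three assertions in turn, exploiting that $L: H_0^1(\Omega) \to H^{-1}(\Omega)$ is a bijection whose inverse $T$ is given by the Lax--Milgram theorem.

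For the norm equivalence, the uniform ellipticity condition \cref{eq_unif_ellipt} yields $\|u\|_{H_0^1(\Omega),L}^2 \geq \lambda \|\nabla u\|_{L^2(\Omega)}^2$, while boundedness of the entries of $\bm A$ in $L^\infty(\Omega)$ gives a matching upper bound; combining these with the Poincar\'e inequality (the $q=2$ case of \cref{eq_sobolev} applied to $H_0^1(\Omega)$) shows that $\|\cdot\|_{H_0^1(\Omega),L}$ is equivalent to the standard $H_0^1(\Omega)$-norm. For the dual inner product, I would observe that $L$ plays the role of the Riesz isomorphism from $(H_0^1(\Omega), \innerii{\cdot,\cdot}_{H_0^1(\Omega)})$ onto its dual, since $\innerii{u,v}_{H_0^1(\Omega)} = \langle Lu, v\rangle$. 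Pulling the Hilbert-space inner product back to the dual along $T$ gives $\innerii{f,g}_{H^{-1}(\Omega)} = \innerii{Tf, Tg}_{H_0^1(\Omega)} = \langle L(Tf), Tg\rangle = \langle f, Tg\rangle$, and symmetry in $f,g$ reduces, after writing $f = Lu$ and $g = Lv$, to $\langle \bm A \nabla u, \nabla v\rangle_{L^2(\Omega)} = \langle \bm A \nabla v, \nabla u\rangle_{L^2(\Omega)}$, which holds because $\bm A$ is symmetric.

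The main step is self-adjointness of $L_{-1}: D(L_{-1}) = H_0^1(\Omega) \subset H^{-1}(\Omega) \to H^{-1}(\Omega)$. Density of the domain follows from the Gelfand-triple chain $H_0^1 \subset L^2 \subset H^{-1}$, both of whose inclusions are dense. I would verify symmetry by the computation
\begin{align*}
    \innerii{L_{-1} u, v}_{H^{-1}(\Omega)}
    &= \langle Lu, Tv\rangle = \int_\Omega \bm A \nabla u \cdot \nabla(Tv)\, dx \\
    &= \int_\Omega \bm A \nabla(Tv) \cdot \nabla u\, dx = \langle L(Tv), u\rangle = \int_\Omega u v\, dx,
\end{align*}
valid for $u,v \in H_0^1(\Omega)$ by symmetry of $\bm A$ together with $LT = I$; swapping $u$ and $v$ yields the same value for $\innerii{u, L_{-1} v}_{H^{-1}(\Omega)}$.

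The principal obstacle is showing $D(L_{-1}^*) \subset H_0^1(\Omega)$, for which I would exploit the standard fact that a densely defined symmetric operator with an everywhere-defined bounded inverse is self-adjoint; here the inverse is precisely $T: H^{-1}(\Omega) \to H_0^1(\Omega)$. Concretely, if $f \in D(L_{-1}^*)$ with $L_{-1}^* f = g$, then $\innerii{L_{-1} u, f}_{H^{-1}(\Omega)} = \innerii{u, g}_{H^{-1}(\Omega)}$ for every $u \in H_0^1(\Omega)$ translates, via the computation above, into $\int_\Omega \bm A \nabla u \cdot \nabla(Tf)\, dx = \int_\Omega u\, (Tg)\, dx$, i.e., the weak identity $L(Tf) = Tg$ with $Tg \in H_0^1(\Omega) \subset L^2(\Omega) \subset H^{-1}(\Omega)$. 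Applying $LT = I$ to the left-hand side yields $f = Tg \in H_0^1(\Omega)$, so $D(L_{-1}^*) = D(L_{-1})$ and $L_{-1}$ is self-adjoint.
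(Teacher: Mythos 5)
Your proof is correct, and it follows broadly the same architecture as the paper's: establish norm equivalence via uniform ellipticity, boundedness of $\bm A$, and Poincar\'e; identify the dual inner product with $\langle f, Tg\rangle$; prove symmetry of $L_{-1}$; and finally show $D(L_{-1}^*) \subset D(L_{-1})$. The differences are worth noting, and in each case your route is arguably cleaner. For the dual inner product you observe that $L$ is the Riesz isomorphism $H_0^1 \to H^{-1}$ and pull the Hilbert inner product back along $T$, giving $\innerii{f,g}_{H^{-1}} = \innerii{Tf,Tg}_{H_0^1} = \langle f, Tg\rangle$ in one line; the paper instead passes through the dual-norm supremum $\|f\|_{H^{-1},L}^2 = \sup_u \innerii{Tf,u}^2/\|u\|_{H_0^1,L}^2 = \langle f, Tf\rangle$ and then polarizes. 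These are equivalent, with yours being more conceptual and the paper's more elementary. The more substantial difference is in the self-adjointness step: you invoke the standard functional-analysis fact that a densely defined symmetric operator with a bounded everywhere-defined inverse is self-adjoint, with $T$ playing the role of the inverse, then also verify concretely that $L_{-1}^* f = g$ forces $f = Tg \in H_0^1(\Omega)$. The paper instead runs a five-term chain of equalities $\innerii{f,g}_{H^{-1}} = \innerii{f, L_{-1}v} = \innerii{L_{-1}^*f, v} = \innerii{L_{-1}u,v} = \innerii{u, L_{-1}v} = \innerii{u,g}$ for arbitrary $g$, relying on symmetry. Your invocation of the standard fact is a genuine shortcut that packages that chain into a known lemma. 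You also explicitly address density of $D(L_{-1})$ in $H^{-1}(\Omega)$ via the Gelfand triple $H_0^1 \subset L^2 \subset H^{-1}$, a point the paper leaves tacit; and you consistently use the $L^2$ pairing to realize $H_0^1(\Omega)$ as a subspace of $H^{-1}(\Omega)$, which is the natural choice that makes $L_{-1}$ a densely defined unbounded operator rather than a surjective identification (the paper's embedding statement reads $u \mapsto \innerii{u,\cdot}_{H_0^1}$, i.e.\ $u \mapsto Lu$, which is the Riesz isomorphism onto all of $H^{-1}$ and would trivialize the domain; your choice is the coherent one that also matches the intended eigenvalue problem).
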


\begin{proof}
    By symmetry and positive-definiteness of the coefficient matrix $\bm A$, it follows that $\innerii{u, v}_{H_0^1(\Omega)}$ is symmetric and nonnegative-definite.
    Let $\| u \|_{H_0^1(\Omega),L} := \sqrt{\innerii{u, u}_{H_0^1(\Omega)}}$.
    For $u\in H_0^1(\Omega)$, combining the Poincar\'{e} inequality and the uniform ellipticity of $L$ yields
    \[
        \Vert u \Vert_{H_0^1(\Omega)}^2
        \leq C(\Omega) \int_{\Omega} \vert \grad u \vert^2 \d x
        \leq \frac{C(\Omega)}{\lambda} \int_{\Omega} \bm A \grad u \cdot \grad u \d x
        = \frac{C(\Omega)}{\lambda} \|u\|_{H_0^1(\Omega),L}^2,
    \]
    where $C(\Omega)$ is the Poincar\'{e} constant and $\lambda$ is the uniform ellipticity constant in \cref{eq_unif_ellipt}. Thus, we obtain
    \[\frac{\lambda^{1/2}}{C(\Omega)^{1/2}}\Vert u \Vert_{H_0^1(\Omega)}\leq \|u\|_{H_0^1(\Omega),L}\leq \|\sigma_1(\bm A)\|_{L^\infty(\Omega)}\Vert u \Vert_{H_0^1(\Omega)},\]
    where $\sigma_1$ denotes the largest singular value, and the last inequality follows from the boundedness of the coefficients of $\bm A$.
    Hence, $\|\cdot\|_{H_0^1(\Omega),L}$ is a norm is equivalent to the usual $H_0^1$-norm.
    
    To determine the inner product on the dual space, we observe that the action of $f \in H^{-1}(\Omega)$ on $u \in H_0^1(\Omega)$ can be expressed with respect to the inner product $\innerii{u, v}_{H_0^1(\Omega)}$ as
    \begin{equation}\label{eq_Hm1_action_H1_inner_prod}
        \langle f, u \rangle
        = \langle L T f, u \rangle
        = \innerii{T f, u}_{H_0^1(\Omega)}.
    \end{equation}
    Using this and the symmetry of $\innerii{\cdot, \cdot}_{H_0^1(\Omega)}$, we obtain the symmetry
    \[
        \langle f, T g \rangle
        = \innerii{T f, T g}_{H_0^1(\Omega)}
        = \langle g, T f \rangle \qquad \forall f,g \in H^{-1}(\Omega).
    \]
    Following an approach similar to~\citet[Lem.~2.1]{Bramble1997least}, the corresponding norm on $H^{-1}(\Omega)$ is given by
    \[
        \| f \|_{H^{-1}(\Omega),L}^2
        = \sup_{\substack{u \in H_0^1(\Omega),\\u\neq 0}} \frac{\innerii{T f, u}_{H_0^1(\Omega)}^2}{\|u\|_{H_0^1(\Omega),L}^2}
        = \|T f\|_{H_0^1(\Omega),L}^2
        = \langle f, T f \rangle,
    \]
    where the first equality comes from \cref{eq_Hm1_action_H1_inner_prod} and the second equality follows from the Cauchy--Schwarz inequality. By polarization, the corresponding inner product on $H^{-1}(\Omega)$ is given by \cref{eq_Hm1_inner_prod}.

    To prove self-adjointness, we first show that $L_{-1}: D(L_{-1}) = H_0^1(\Omega) \subset H^{-1}(\Omega) \to H^{-1}(\Omega)$ is symmetric.
    Choosing any $f, g \in D(L_{-1}) = H_0^1(\Omega) \subset H^{-1}(\Omega)$, we have
    \[
        \innerii{f, L_{-1} g}_{H^{-1}(\Omega)}
        = \langle f, T L_{-1} g \rangle
        = \langle f, g \rangle_{L^2(\Omega)}.
    \]
    By symmetry of the inner products, we obtain
    \[
        \innerii{f, L_{-1} g}_{H^{-1}(\Omega)}
        = \innerii{L_{-1} f, g}_{H^{-1}(\Omega)},
    \]
    meaning that $L_{-1}$ is symmetric.

    It remains to show that $D(L_{-1}^*) \subset H_0^1(\Omega)$.
    Let $f \in D(L_{-1}^*) \subset H^{-1}(\Omega)$ and $g \in H^{-1}(\Omega)$.
    We observe that both $u = T L_{-1}^* f$ and $v = T g$ are elements in $H_0^{1}(\Omega)$.
    Therefore, by symmetry, we have
    \begin{align*}
        \innerii{f, g}_{H^{-1}(\Omega)}
         & = \innerii{f, L_{-1} v}_{H^{-1}(\Omega)} = \innerii{L_{-1}^* f, v}_{H^{-1}(\Omega)} = \innerii{L_{-1} u, v}_{H^{-1}(\Omega)} \\
         & = \innerii{u, L_{-1} v}_{H^{-1}(\Omega)} = \innerii{u, g}_{H^{-1}(\Omega)},
    \end{align*}
    which means that $f = u \in H_0^{1}(\Omega)$, and $L_{-1}$ is self-adjoint.
\end{proof}

The following lemma generalizes \cref{lem_self_adjoint_uniform} and defines higher-order Sobolev spaces $\mcal{H}_m$ for $m\geq 0$, equipped with inner products equivalent to the standard $H^m$ inner products, and shows that $L$ is self-adjoint on these spaces under suitable regularity conditions on the coefficients of the operator.

\begin{lemma} \label{lem_self_adjoint_uniform_2}
    Under the same regularity assumptions as in \cref{prop_higher_regularity}, for each integer $m \geq 0$ the space $\mcal{H}_m$, defined by $\mcal{H}_0 = L^2(\Omega)$ and
    \[
        \mcal{H}_m = \left\{ u \in H^m(\Omega) \ : \ L^{k} u = 0 \mbox{ on } \partial \Omega, \ k=0, \ldots, \lceil m/2 \rceil - 1 \right\}, \quad m \geq 1,
    \]
    is a separable Hilbert space with the inner product
    \[
        \langle f, \ g \rangle_{\mcal{H}_m}
        = \begin{cases}
            \langle L^k f, \ L^k g \rangle_{L^2(\Omega)},                   & \text{if } m = 2k \text{ is even},    \\
            \langle \bm A \grad L^k f, \ \grad L^k g \rangle_{L^2(\Omega)}, & \text{if } m = 2k + 1 \text{ is odd},
        \end{cases}
    \]
    which is equivalent to the $H^m(\Omega)$-inner product on $\mcal{H}_m$.

    Moreover, the operator $L_{m} \colon D(L_m) \subset \mcal{H}_m \to \mcal{H}_m$ obtained by restricting $L$ to the domain
    \[
        D(L_m) = \left\{ u \in H^{m+2}(\Omega) \ : \ L^k u = 0 \mbox{ on } \partial \Omega, \ k=0, \ldots, \lceil m/2 \rceil \right\}
    \]
    is self-adjoint.
\end{lemma}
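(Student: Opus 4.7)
\textbf{Proof plan for \cref{lem_self_adjoint_uniform_2}.} My plan is to generalize the argument of \cref{lem_self_adjoint_uniform} by playing the boundary conditions defining $\mcal{H}_m$ and $D(L_m)$ against elliptic regularity for the Dirichlet problem associated with $L$. I would first verify that $\mcal{H}_m$ is well defined as a closed subspace of $H^m(\Omega)$: thanks to the regularity assumptions on $\bm{A}$ and $\partial\Omega$, the iterated operator $L^k$ maps $H^m(\Omega)$ continuously into $H^{m-2k}(\Omega)$ for $0\leq k\leq \lceil m/2\rceil-1$, and each such $L^k u$ has a well-defined trace in the appropriate Sobolev space on $\partial \Omega$ (since $m-2k\geq 1$). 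The trace conditions $L^k u = 0$ therefore cut out a closed linear subspace of $H^m(\Omega)$, making $\mcal{H}_m$ a separable Hilbert space when endowed with the restriction of the $H^m(\Omega)$ inner product.

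Next I would prove that the inner product on $\mcal{H}_m$ is equivalent to the $H^m(\Omega)$ inner product. One direction is immediate from continuity of $L:H^{j+2}(\Omega)\to H^{j}(\Omega)$ combined with the uniform ellipticity lower bound $\lambda|\nabla f|^2\leq \bm A\nabla f\cdot \nabla f$ and Poincaré's inequality (for the odd case). The converse is the heart of the argument: iterating the global elliptic regularity estimate~\eqref{eqn:global_regularity} for the Dirichlet problem of $L$ on the successive functions $L^{k-1}u,L^{k-2}u,\ldots,u$, each of which lies in $H_0^1(\Omega)$ thanks to the boundary conditions imposed by $\mcal{H}_m$, yields
\[
  \|u\|_{H^{2k}(\Omega)} \leq C\,\|L^k u\|_{L^2(\Omega)},\qquad m=2k,
\]
and a similar bound $\|u\|_{H^{2k+1}(\Omega)}\leq C\,\|\nabla L^k u\|_{L^2(\Omega)}$ in the odd case (by replacing the last application of elliptic regularity with Poincaré and coercivity of $\bm A$). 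This gives the desired equivalence.

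For self-adjointness of $L_m$ I would first check that $L_m$ is well defined, namely $L_m$ maps $D(L_m)$ into $\mcal{H}_m$: if $u\in D(L_m)$ then $Lu\in H^m(\Omega)$ (by the $C^{m+1}$ regularity of the coefficients), and $L^k(Lu)=L^{k+1}u=0$ on $\partial\Omega$ for $k=0,\ldots,\lceil m/2\rceil-1$ thanks to the stronger BCs defining $D(L_m)$. Symmetry then follows from Green's second identity applied to $L$: in the even case $m=2k$, we have
\[
  \langle u,L_m v\rangle_{\mcal{H}_m}=\langle L^k u,L^{k+1}v\rangle_{L^2(\Omega)}=\langle L^{k+1}u,L^k v\rangle_{L^2(\Omega)}=\langle L_m u,v\rangle_{\mcal{H}_m},
\]
where the middle equality uses Green's identity with $f=L^k u$ and $g=L^k v$, both vanishing on $\partial\Omega$ by definition of $D(L_m)$. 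The odd case is analogous: a single integration by parts, absorbing the $\nabla$ onto $L^{k+1}v$ (which vanishes on $\partial\Omega$), shows that both $\langle u,L_m v\rangle_{\mcal{H}_m}$ and $\langle L_m u,v\rangle_{\mcal{H}_m}$ are equal to $\langle L^{k+1}u,L^{k+1}v\rangle_{L^2(\Omega)}$.

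The last and most delicate step is the inclusion $D(L_m^*)\subset D(L_m)$, which I would prove following the template of \cref{lem_self_adjoint_uniform}. The key ingredient is bijectivity of $L_m: D(L_m) \to \mcal{H}_m$. Injectivity is immediate from uniqueness for the homogeneous Dirichlet problem. For surjectivity, given $h\in\mcal{H}_m$, the solution $u\in H_0^1(\Omega)$ of $Lu=h$ produced by Lax--Milgram automatically satisfies $u\in H^{m+2}(\Omega)$ by global elliptic regularity, and inherits $L^{k+1}u = L^k h = 0$ on $\partial\Omega$ for $k=0,\ldots,\lceil m/2\rceil -1$ from $h\in\mcal{H}_m$, so that $u\in D(L_m)$. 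Equipped with $L_m^{-1}:\mcal{H}_m\to D(L_m)$, for any $f\in D(L_m^*)$ and $g\in\mcal{H}_m$ I would set $u=L_m^{-1}L_m^*f$ and $v=L_m^{-1}g$, and chain
\[
  \langle f,g\rangle_{\mcal{H}_m}=\langle f,L_m v\rangle_{\mcal{H}_m}=\langle L_m^*f,v\rangle_{\mcal{H}_m}=\langle L_m u,v\rangle_{\mcal{H}_m}=\langle u,L_m v\rangle_{\mcal{H}_m}=\langle u,g\rangle_{\mcal{H}_m},
\]
whence $f=u\in D(L_m)$. The main obstacle is ensuring that the boundary traces $L^k u = 0$ propagate cleanly through the iterated elliptic regularity and the Lax--Milgram construction; this is where the $C^{m+1}$ smoothness of the coefficients and the $C^{m+2}$ smoothness of $\partial\Omega$ (assumed in \cref{prop_higher_regularity}) enter in an essential way.
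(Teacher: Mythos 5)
Your proposal is correct and follows essentially the same route as the paper's proof: closedness of $\mathcal{H}_m$ via the trace theorem, norm equivalence via iterated/inductive application of the Dirichlet elliptic regularity estimate for $L$, symmetry of $L_m$ via Green's identity for $L$, and the inclusion $D(L_m^*)\subset D(L_m)$ by solving auxiliary Dirichlet problems $Lu'=L_m^*f$, $Lv=g$, applying elliptic regularity to place $u',v$ in $H^{m+2}(\Omega)$, and reading off the boundary conditions from $\mathcal{H}_m$. Packaging the last step as bijectivity of $L_m$ is a cosmetic reorganization of what the paper does, and your sign bookkeeping in the odd-case symmetry computation is actually cleaner than the paper's.
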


\begin{proof}
    We begin by verifying that the symmetric, nonnegative form $\langle \cdot, \cdot \rangle_{\mcal{H}_m}$ is a valid inner product on $\mcal{H}_m$.
    In the $m=0$ case, there is nothing to prove because $\mcal{H}_0 = L^2(\Omega)$ and the inner products are identical.
    By the trace theorem given by \cite[Chap.~6.4, Thm.~6.108]{Renardy2004introduction}, we observe that $\mcal{H}_m$, $m \geq 1$ is a closed subspace of $H^m(\Omega)$.
    To verify that $\langle \cdot, \cdot \rangle_{\mcal{H}}$ is an inner product on $\mcal{H}_m$, we show that it induces a norm equivalent to the $H^m(\Omega)$ norm on $\mcal{H}_m$.
    The case $m=1$ is evident from \cref{lem_self_adjoint_uniform} and the fact that $\mcal{H}_1 = H_0^1(\Omega)$, see \cite[Chap.~5.5, Thm.~2]{evans10}.
    It is also clear from the boundedness of the coefficients $\bm A$ and their derivatives up to order $m+1$ that
    \[
        \langle f, f \rangle_{\mcal{H}_m} \leq C(\bm A, \Omega, m) \| f \|_{H^m(\Omega)}^2,
    \]
    for a constant $C(\bm A, \Omega, m)$ depending only on the coefficients, the spatial domain, and $m$.
    Hence, it suffices to bound $\langle f, f \rangle_{\mcal{H}_m}$ from below by $\| f \|_{H^m(\Omega)}^2$ times a constant positive factor.

    We proceed by induction separately on even and odd $m$, having already established the base cases $m=0$ and $m=1$.
    Consider $f \in \mcal{H}_{m+2}$.
    By definition, we observe that $L f \in \mcal{H}_m$.
    By the standard elliptic regularity estimate given by \citep[Chap.~6.3, Thm.~5]{evans10} and the induction hypothesis we have
    \[
        \| f \|_{H^{m+2}(\Omega)}
        \leq C'(\bm A, \Omega, m) \| L f \|_{H^{m}(\Omega)}
        \leq C''(\bm A, \Omega, m) \| L f \|_{\mcal{H}_m},
    \]
    for a nonzero constant $C''(\bm A, \Omega, m)$.
    Since
    \[
        \| L f \|_{\mcal{H}_m}^2 = \langle f, f \rangle_{\mcal{H}_{m+2}} =: \| f \|_{\mcal{H}_{m+2}}^2,
    \]
    we obtain the equivalence of the norms $\| \cdot \|_{H^m(\Omega)}$ and $\| \cdot \|_{\mcal{H}_m}$ on every $\mcal{H}_m$ by induction.
    Separability follows from the fact that $H^{m}(\Omega)$ is separable and any subset of a separable metric space is separable~\cite[Prop.~3.25]{Brezis2010functional}.

    To prove that the operator $L_m$ is self-adjoint, it suffices to prove that it is symmetric on $D(L_m)$ and that $D(L_m^*) \subset D(L_m)$, see \cite[Chap.~VIII.2]{Reed1980functional}.
    To prove symmetry, we observe that for every $f \in C_c^{\infty}(\Omega)$ and $g \in C^{\infty}(\Omega)$ we have
    \begin{equation}\label{eqn_H01_H2_L_Green}
        \langle f, \ L g \rangle_{L^2(\Omega)}
        = - \langle \bm A \grad f, \ \grad g \rangle_{L^2(\Omega)}
    \end{equation}
    thanks to the divergence theorem and the identity
    \[
        \div ( f \bm A \grad g)
        = \grad f \cdot \bm A \grad g + f \div (\bm A \grad g).
    \]
    By density of $C_c^{\infty}(\Omega)$ in $H_0^1(\Omega)$ and density of $C^{\infty}(\Omega)$ in $H^2(\Omega)$ by the Meyers--Serrin theorem~\cite[Thm.~3.17]{adams2003sobolev}, it follows that \cref{eqn_H01_H2_L_Green} holds for every $f \in H_0^1(\Omega)$ and every $g \in H^2(\Omega)$.

    Suppose that $m=2k$ is even and choose $u, v \in D(L_m)$.
    This means that $L^k u,\ L^k v \in H_0^1(\Omega) \cap H^2(\Omega)$ and we can apply \cref{eqn_H01_H2_L_Green} to $f=L^k u$ and $g=L^k v$ yielding
    \[
        \langle u,\ L v \rangle_{\mcal{H}_m}
        = \langle L^k u, \ L^{k+1} v \rangle_{L^2(\Omega)}
        = - \langle \bm A \grad L^k u, \ \grad L^k v \rangle_{L^2(\Omega)}
        = \langle L u,\ v \rangle_{\mcal{H}_m}.
    \]
    Now suppose that $m=2k+1$ is odd and choose $u, v \in D(L_m)$.
    This means that $L^{k+1} u,\ L^{k+1} v \in H_0^1(\Omega)$ and that $L^k u,\ L^k v \in H^2(\Omega)$.
    Applying \cref{eqn_H01_H2_L_Green} to $f = L^{k+1} u$ and $g = L^k v$ yields
    \[
        \langle L u,\ v \rangle_{\mcal{H}_m}
        = \langle \bm A \grad L^{k+1} u,\ \grad L^k v \rangle_{L^2(\Omega)}
        = - \langle L^{k+1} u,\ L^{k+1} v \rangle_{L^2(\Omega)}
        = \langle u,\ L v \rangle_{\mcal{H}_m}.
    \]
    Therefore, $L_m$ is a symmetric operator for every integer $m \geq 0$.

    Next we show that $D(L_m^*) \subset D(L_m)$.
    To do this, we choose $u \in D(L_m^*)$ and $f \in \mcal{H}_m$ and we let $u',\ v \in H_0^1$ be the solutions of the elliptic Dirichlet problems
    \[
        L u' = L^* u \quad \mbox{and} \quad L v = f.
    \]
    By the standard elliptic regularity estimate given by \cite[Chap.~6.3, Thm.~5]{evans10}, it follows that $u',\ v \in H^{m+2}(\Omega)$.
    Moreover, since $L u',\ L v \in \mcal{H}_m$ it follows that $L^{k} u' = L^{k} v = 0$ on $\partial \Omega$ for $k=0, \ldots, \lceil m/2 \rceil$, meaning that $u',\ v \in D(L_m)$.
    Using the symmetry of $L_m$, the computation
    \begin{align*}
        \langle u,\ f \rangle_{\mcal{H}_m}
         & = \langle u,\ L_m v \rangle_{\mcal{H}_m}
        = \langle L_m^* u,\ v \rangle_{\mcal{H}_m}
        = \langle L_m u',\ v \rangle_{\mcal{H}_m}
        = \langle u',\ L_m v \rangle_{\mcal{H}_m}   \\
         & = \langle u',\ f \rangle_{\mcal{H}_m},
    \end{align*}
    shows that $u = u' \in D(L_m)$ since $f \in \mcal{H}_m$ is arbitrary.
    Therefore, $D(L_m^*) \subset D(L_m)$, which completes the proof that $L_m$ is self-adjoint.
\end{proof}

We are now ready to apply \cref{thm:operator_projection} to the operator $A$ on the space $\mcal{H} = (H^{-1}(\Omega), \|\cdot\|_{H^{-1}(\Omega),L})$, where $L$ is self-adjoint. Here, $\|\cdot\|_{H^{-1}(\Omega),L}$ is the norm corresponding to the inner product $\innerii{\cdot, \cdot}_{H^{-1}(\Omega)}$ defined in \cref{lem_self_adjoint_uniform}. The equivalence of the norms $\|\cdot\|_{H^{-1}(\Omega),L}$ and $\|\cdot\|_{H^{-1}(\Omega)}$ follows from the uniform ellipticity condition \cref{eq_unif_ellipt} and the Sobolev inequality \cref{eq_sobolev} and allows us to formulate an approximation bound in the usual $H^{-1}$ operator norm.

\begin{reptheorem}{sec_approx_H}[Approximation in $H^{-1}$]
    Let $A$ and $T:H^{-1}(\Omega)\to H^1_0(\Omega)$ denote the solution operators associated with the elliptic operators $\L : u \mapsto -\div(\bm A\nabla u)+\bm c \cdot \nabla u$ and $L : u \mapsto -\div(\bm A\nabla u)$, defined respectively in \cref{eq_L_tilde_elliptic,eq_L_elliptic}. Let $n\geq 1$ and $P_n:H^{-1}(\Omega)\to H^{-1}(\Omega)$ denote the $H^{-1}$-projection onto the space spanned by the first $n$ eigenfunctions of $L$. There exists a constant $C(\Omega,p)$, depending only on $\Omega$ and $p$ such that if $\bm c\in L^p(\Omega)$ satisfies $\|\bm c\|_{L^p(\Omega)}<\lambda/C(\Omega,p)$, then the operator $A$ can be approximated by the operator $A P_n$ in the $H^{-1}(\Omega)$-norm as
    \[
        \|A-A P_n\|_{H^{-1}(\Omega)\to H^{-1}(\Omega)}\leq \frac{1}{\lambda_{n+1}}\frac{\tilde{C}(\bm A,\Omega,p)}{\lambda-C(\Omega,p)\|\bm c\|_{L^p(\Omega)}}\| T \|_{H^{-1}(\Omega) \to H^1_0(\Omega)},
    \]
    where $\lambda_{n+1}$ is the $(n+1)$-th eigenvalue of the operator $L$, and $\tilde{C}(\bm A,\Omega,p)$ is a constant independent of $\bm c$.
\end{reptheorem}
\begin{proof}
    Combining the fact that $H_0^1(\Omega)\subset H^{-1}(\Omega)$ and the equivalence of the norms $\|\cdot\|_{H^{-1}(\Omega),L}$ and $\|\cdot\|_{H^{-1}(\Omega)}$ (see~\cref{lem_self_adjoint_uniform}), we find that $A:\mathcal{H}\to\mathcal{H}$ is a bounded linear operator. Moreover, following \cref{lem_self_adjoint_uniform}, the operator $L$ is self-adjoint on $\mathcal{H}$. Hence, we can apply \cref{thm:operator_projection} on the space $\mcal{H} = (H^{-1}(\Omega), \|\cdot\|_{H^{-1}(\Omega),L})$ to obtain
    \begin{equation} \label{eq_approx_thm_8}
        \| A - A P_n \|_{\mathcal{H} \to \mathcal{H}}
        \leq \frac{1}{\lambda_{n+1}} \| L A^* \|_{\mathcal{H} \to \mathcal{H}}.
    \end{equation}
    However, following the definition of the norm $\|\cdot\|_{H^{-1},L}$, we have
    \[\| L A^* \|_{\mathcal{H} \to \mathcal{H}} = \|A^*\|_{H^{-1},L\to H_{0}^1,L}\leq C(\bm A,\Omega)\|A^*\|_{H^{-1}(\Omega)\to H_0^1(\Omega)},\]
    where the last inequality is due to the equivalence between the norms $\|\cdot\|_{H_0^1(\Omega),L}$, $\|\cdot\|_{H_0^1(\Omega)}$, and $\|\cdot\|_{H^{-1}(\Omega),L}$, $\|\cdot\|_{H_0^1(\Omega)}$. Here, the constant $C(\bm A,\Omega)$ depends only on $\bm A$ and $\Omega$. We then control the term $\|A^*\|_{H^{-1}(\Omega)\to H_0^1(\Omega)}$ using \cref{prop_perturbed_inverse} to obtain
    \[\|A^*\|_{H^{-1}(\Omega)\to H_0^1(\Omega)}\leq \frac{C(\Omega,p)}{\lambda - C(\Omega,p) \|\bm c\|_{L^p(\Omega)}} \| T \|_{H^{-1}(\Omega) \to H^1_0(\Omega)}.\]
    On the other hand, we exploit the equivalence between $\|\cdot\|_{H^{-1}(\Omega),L}$ and $\|\cdot\|_{H^{-1}(\Omega)}$ in \cref{eq_approx_thm_8} as
    \[C'(\bm A, \Omega)\|A-AP_n\|_{H^{-1}(\Omega)\to H^{-1}(\Omega)}\leq \frac{1}{\lambda_{n+1}} \| L A^* \|_{\mathcal{H} \to \mathcal{H}}.\]
    After introducing the constant $\tilde{C}(\bm A,\Omega,p)=C(\Omega,p)C(\bm A,\Omega)/C'(\bm A,\Omega)$, we conclude that
    \[\|A-A P_n\|_{H^{-1}(\Omega)\to H^{-1}(\Omega)}\leq \frac{1}{\lambda_{n+1}}\frac{\tilde{C}(\bm A,\Omega,p)}{\lambda-C(\Omega,p)\|\bm c\|_{L^p(\Omega)}}\| T \|_{H^{-1}(\Omega) \to H^1_0(\Omega)}.\]
\end{proof}

The proof of \cref{thm_approx_L2}, and its analog in higher-order Sobolev spaces, is identical to the proof of \cref{sec_approx_H} using \cref{lem_self_adjoint_uniform_2} instead of \cref{lem_self_adjoint_uniform}.

\end{document}